\newtheorem{theorem}{Theorem}[section]
\newtheorem{corollary}[theorem]{Corollary}
\newtheorem{definition}[theorem]{Definition}
\newtheorem{example}[theorem]{Example}
\newtheorem{lemma}[theorem]{Lemma}
\newtheorem{proposition}[theorem]{Proposition}
\newtheorem{remark}[theorem]{Remark}
\newenvironment{proof}[1][Proof]{\noindent\textbf{#1.} }{\ \rule{0.5em}{0.5em}}
\newcommand{\Cyclic}{\mathop{\mathchoice%
  {\vcenter{\hbox{\LARGE\( \mathfrak S \)%
    \vrule width 0pt height 1.7ex depth 0.2ex}}}
  {\vcenter{\hbox{\Large\( \mathfrak S \)\kern-0.1em}}}
  {\vcenter{\hbox{\normalsize\( \mathfrak S \)\kern -0.1em}}}
  {\vcenter{\hbox{\scriptsize\( \mathfrak S \)\kern -0.1em}}}}}
\newcommand{\f}{\mathfrak}
\newcommand{\dt}{\left.\frac{d}{dt}\right|_{t=0}}
\newcommand{\wnabla}{\widetilde{\nabla}}
\newcommand{\R}{\mathbb{R}}
\newcommand{\C}{\mathbb{C}}
\newcommand{\SXYZ}{\raisebox{-0.6ex}{\mbox{\scriptsize{$\mathit{XYZ}$}}}}
\newcommand{\SYZV}{\raisebox{-0.6ex}{\mbox{\scriptsize{$\mathit{YZV}$}}}}
\newcommand{\Lie}[1]{\textsl{#1}}
\title{Reductive locally homogeneous pseudo-Riemannian manifolds and Ambrose-Singer connections}
\author{ Ignacio Luj\'an  \\
\small{Departamento de Geometr\'\i a y Topolog\'\i a} \\
\small{Facultad de Matem\'aticas, Universidad Complutense de Madrid}\\
\small{28040 Madrid, Spain}}
\date{}
\begin{document}

\maketitle

\abstract{Ambrose and Singer characterized connected,
simply-connected and complete homogeneous Riemannian manifolds as
Riemannian manifolds admitting a metric connection such that its
curvature and torsion are parallel.  The aim of this paper is to
extend Ambrose-Singer Theorem to the general framework of locally
homogeneous pseudo-Riemannian manifolds. In addition we study
under which conditions a locally homogeneous pseudo-Riemannian
manifold can be recovered from the curvature and their covariant
derivatives at some point up to finite order. The same problem is
tackled in the presence of a geometric structure.}

\renewcommand{\thefootnote}{\fnsymbol{footnote}}
\footnotetext{\emph{MSC2010:} Primary 53C30, Secondary 53C50.}
\renewcommand{\thefootnote}{\arabic{footnote}}
\renewcommand{\thefootnote}{\fnsymbol{footnote}}
\footnotetext{\emph{Key words and phrases:} Ambrose-Singer
connections, locally homogeneous\\ pseudo-Riemannian manifolds,
invariant geometric structures.}
\renewcommand{\thefootnote}{\arabic{footnote}}


\section{Introduction}

In \cite{Ambrose-Singer} Ambrose and Singer characterized
connected, simply-connected and complete homogeneous Riemannian
manifolds as Riemannian manifolds $(M,g)$ admitting a linear
connection $\wnabla$ satisfying
$$\wnabla g=0,\qquad \wnabla
R=0,\qquad \wnabla S=0,$$ where $S=\nabla-\wnabla$, $\nabla$ is
the Levi-Civita connection of $g$, and $R$ the curvature tensor
field of $g$. Connections satisfying the previous equations would
become later known as Ambrose-Singer connections. Since their
introduction, Ambrose-Singer connections have become an
extensively used tool for the study of homogeneity. In addition,
Ambrose-Singer Theorem has been extended to the case when the
manifold is endowed with a geometric structure \cite{Kir}, and
later to the pseudo-Riemannian setting \cite{GO}.

Regarding locally homogeneous spaces, the following result is
known (see for instance \cite{Tri}).

\begin{theorem}\label{Thm AS local riemanniano}
Let $(M,g)$ be a Riemannian manifold. Then $(M,g)$ is locally
homogeneous if and only if it admits an Ambrose-Singer connection.
\end{theorem}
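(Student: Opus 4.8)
The plan is to prove the two implications separately, exploiting the correspondence between Ambrose-Singer connections and reductive (local) structures. For the direction ``locally homogeneous $\Rightarrow$ Ambrose-Singer connection'', I would start from the transitive Lie algebra $\mathfrak{g}$ of germs at a point $p$ of local Killing fields of $(M,g)$; local homogeneity guarantees that the evaluation map $\mathfrak{g}\to T_pM$, $X\mapsto X_p$, is onto, with kernel the isotropy subalgebra $\mathfrak{h}$. The essential observation is that the linear isotropy representation sends $\mathfrak{h}$ into $\mathfrak{so}(T_pM,g_p)$, since a Killing field vanishing at $p$ has skew-symmetric covariant derivative there. Because $g$ is positive definite, $\mathfrak{so}(T_pM,g_p)$ is compact, so by an averaging argument I obtain an $\mathrm{ad}(\mathfrak{h})$-invariant reductive complement $\mathfrak{m}$ with $\mathfrak{g}=\mathfrak{h}\oplus\mathfrak{m}$. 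The canonical connection attached to this reductive decomposition is metric and, by $\mathfrak{g}$-invariance, has parallel curvature and torsion; setting $\wnabla$ equal to it and $S=\nabla-\wnabla$ yields $\wnabla g=0$, $\wnabla R=0$, $\wnabla S=0$, i.e.\ an Ambrose-Singer connection, which local homogeneity propagates to a globally defined tensor field.

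For the converse, suppose $\wnabla$ is an Ambrose-Singer connection. From $\wnabla g=0$, $\wnabla R=0$ and $\wnabla S=0$ I would first deduce that the torsion $\widetilde{T}$ and the curvature $\widetilde{R}$ of $\wnabla$ are themselves $\wnabla$-parallel. Fixing $p$, I then build the infinitesimal data $(V=T_pM,\;g_p,\;\widetilde{R}_p,\;\widetilde{T}_p)$ and let $\mathfrak{h}_0\subseteq\mathfrak{so}(V)$ be the holonomy algebra of $\wnabla$, spanned by the endomorphisms $\widetilde{R}_p(x,y)$. On $\mathfrak{g}=\mathfrak{h}_0\oplus V$ I define a bracket by letting $[A,B]$ be the bracket of $\mathfrak{h}_0$, $[A,x]=Ax$, and (up to the sign conventions) $[x,y]=\widetilde{R}_p(x,y)-\widetilde{T}_p(x,y)$ for $A,B\in\mathfrak{h}_0$ and $x,y\in V$, the two summands lying in $\mathfrak{h}_0$ and $V$ respectively.

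The heart of this direction is the verification that this bracket satisfies the Jacobi identity. This is where the first and second Bianchi identities for $\wnabla$ enter, together with the parallelism $\wnabla\widetilde{R}=0$ and $\wnabla\widetilde{T}=0$: the components on $(\mathfrak{h}_0,\mathfrak{h}_0,V)$ and $(\mathfrak{h}_0,V,V)$ reduce to $\mathfrak{h}_0$-invariance of $\widetilde{R}_p$ and $\widetilde{T}_p$, while the component on $(V,V,V)$ is exactly the combination of the two Bianchi identities. Having a Lie algebra $\mathfrak{g}$ with reductive pair $(\mathfrak{g},\mathfrak{h}_0)$ whose isotropy lies in $\mathfrak{so}(V)$, I would invoke Nomizu's realization of infinitesimal models: the $\wnabla$-parallel transport along curves is, thanks to the parallelism of $g$, $\widetilde{R}$ and $\widetilde{T}$, a linear isometry identifying the data at different points, and integrating the corresponding system along $\wnabla$-geodesics produces local isometries between neighborhoods of any two points. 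Transitivity of these local isometries is precisely local homogeneity.

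The step I expect to be the main obstacle is twofold. In the forward direction it is the construction of the reductive complement $\mathfrak{m}$, which rests squarely on the compactness of $\mathfrak{so}(T_pM,g_p)$ and hence on positive-definiteness; this is exactly the point that fails in the pseudo-Riemannian setting and motivates the rest of the paper. In the converse direction the delicate part is the Jacobi computation together with the passage from the algebraic model to genuine local isometries, i.e.\ the integrability of the Nomizu system, which must be handled with care so that the resulting isometries are defined on honest neighborhoods rather than merely formally.
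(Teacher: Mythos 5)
Your proposal is correct in substance, but it is worth saying precisely how it sits relative to the paper: the paper never reproves this classical theorem directly (it cites \cite{Tri}); what plays the role of its proof here is the pair of pseudo-Riemannian generalizations, Theorems \ref{theorem 1} and \ref{theorem 2}, specialized to definite metrics. Your forward direction is exactly that specialization: positive definiteness forces the linear isotropy group into the compact group $\Lie{O}(T_pM)$, so an $\mathrm{Ad}(H_p)$-invariant complement $\mathfrak{m}$ of the isotropy algebra inside the Killing algebra exists (by averaging, or by taking the orthogonal complement with respect to a definite invariant inner product on Killing generators $(X,A)$), and this is precisely the reductivity hypothesis of Theorem \ref{theorem 1} — the step the paper isolates as the one that fails in signature. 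One caveat: your phrase ``the canonical connection attached to this reductive decomposition'' compresses the real work, since in the locally homogeneous setting there is no global $G/H$ to which the canonical connection of \cite[Ch.\ X]{KN} applies; the proof of Theorem \ref{theorem 1} builds it by hand, transporting $\Psi(\mathfrak{m})$ along the orbit $Q$ of a fixed frame under the pseudo-group and verifying $\bar{H}$-equivariance, which is exactly where the $\mathrm{Ad}(H_p)$-invariance of $\mathfrak{m}$ is consumed. Your converse, by contrast, takes a genuinely different middle route: the paper's Theorem \ref{theorem 2} never forms the Nomizu algebra at all — it observes that $\widetilde{\nabla}$-parallel transport preserves $g$, $\widetilde{T}$ and $\widetilde{R}$ and invokes the affine-map theorem of \cite[Vol.\ I, Ch.\ VI]{KN} to produce the local isometries $f^{\gamma}$ directly, transitivity being immediate. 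Your detour through $\mathfrak{g}=\mathfrak{h}_0\oplus V$ and the Jacobi identity is sound (the Jacobi identity does reduce to holonomy-invariance of $\widetilde{T}_p,\widetilde{R}_p$ plus the two Bianchi identities, cf.\ Proposition \ref{proposition infinitesimal model}), but for local homogeneity it buys nothing: the closing step of your own argument — parallel transport plus integration along $\widetilde{\nabla}$-geodesics yields local isometries — is already the entire proof. The algebraic model becomes necessary only for the reconstruction and global-model results of Section \ref{section reconstruction}, which is where the paper deploys it.
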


This theorem is no longer true if $g$ is a metric with signature.
In fact, in \cite{GO} it was proved that if a globally homogeneous
pseudo-Riemannian manifold admits an Ambrose-Singer connection,
then it must be reductive. This suggests that in order to extend
Ambrose-Singer Theorem to the pseudo-Riemannian setting we have to
find a condition playing the same role as the reductivity
condition for globally homogeneous spaces.

The aim of this paper is to formulate and prove an analogous
result to Theorem \ref{Thm AS local riemanniano} for
pseudo-Riemannian manifolds. The case when an invariant geometric
structure is present is also analyzed.

On the other hand, all the proofs of Theorem \ref{Thm AS local
riemanniano} (known by the author) make use of the so called
``canonical'' Amborse-Singer connection constructed by Kowalski
\cite{Ko2}. The construction of this connection relies strongly on
the fact that the Killing form of $\f{so}(T_pM)$ is definite if
the metric $g$ is Riemannian, so that a straightforward adaptation
to the pseudo-Riemannian realm is not possible. In Section
\ref{section strongly reductive} we show, under suitable
conditions, how to adapt the construction of the ``canonical''
Ambrose-Singer connection made by Kowalski to metrics with
signature. This will lead to a new notion of reductivity called
``strong reductivity''. As a consequence we will see that strongly
reductive locally homogeneous pseudo-Riemannian manifolds can be
recovered from the curvature and their covariant derivatives at
some point up to finite order. Recall that this property is known
to be satisfied by all locally homogeneous Riemannian manifolds
(see \cite{Nic-Tri}). An analogous result will hold in the
presence of an invariant geometric structure.

\section{Preliminaries}

For a comprehensive introduction on Lie pseudo-groups and
transitive Lie algebras see \cite{Spiro} and the references
therein. We just recall that a transitive Lie algebra is a pair
$(L,L^0)$, where $L$ is a Lie algebra and $L^0$ is a proper
subalgebra such that the only ideal of $L$ contained in $L^0$ is
$\{0\}$.

Let $(M,g)$ be a locally homogeneous pseudo-Riemannian space, and
let $\mathcal{I}$ denote the Lie pseudo-group of local isometries
acting transitively on $(M,g)$. The system of PDE's that must be
satisfied by the elements of $\mathcal{I}$ is
$$f^*g=g.$$
The corresponding system of Lie equations is thus
\begin{equation}\label{Lie equation}\mathcal{L}_Xg=0,\end{equation}
that is, infinitesimal transformations are given by local Killing
vector fields. For a fixed point $p\in M$ we choose a basis
$\{e_1,\ldots,e_m\}$ of $T_pM$. The set $\{e^1,\ldots,e^m\}$
denotes its dual basis. We consider the transitive Lie algebra
$(\f{i},\f{i}^0)$ associated with the system \eqref{Lie equation}.
The Lie algebra $\f{i}$ is the set of vector valued formal power
series
$$\xi=\sum_{r,i,j_1,\ldots,j_r}\xi^i_{j_1\ldots j_r}e_i\otimes e^{j_1}\odot\ldots\odot e^{j_r},$$
where $\xi^i_{j_1\ldots j_r}$ solve \eqref{Lie equation} and all
its derivatives. The subalgebra $\f{i}^0$ is formed by all the
elements of $\f{i}$ such that the terms $\xi^i$ of order zero
vanish. As seen in \cite{Spiro}, an element $\xi\in\f{i}$ is
determined by the terms of order $0$ and $1$, which lie in $T_pM$
and $\f{so}(T_pM)$ respectively.

\begin{definition}
A Killing generator at $p$ is a pair $(X,A)\in
T_pM\times\f{so}(T_pM)$ verifying
$$A\cdot \nabla^iR_p+i_X\nabla^{i+1}R_p=0,\qquad i\geq 0,$$
where $\nabla$ is the Levi-Civita connection of $g$, and $R$ its
curvature tensor field.
\end{definition}

The set $\f{kill}$ of Killing generators at $p$ has a Lie algebra
structure with bracket
$$[(X,A),(Y,B)]=(AX-BY,(R_p)_{XY}+[A,B]).$$
We define
$$\f{kill}^0=\{(X,A)\in\f{kill}/\, X=0\}.$$

\begin{lemma}\label{lemma isomorphism with Killing gen}\cite{Spiro}
$(\f{kill},\f{kill}^0)$ is a transitive Lie algebra isomorphic to
$(\f{i},\f{i}^0)$.
\end{lemma}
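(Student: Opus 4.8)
The plan is to exhibit the explicit map $\Phi\colon\f{i}\to T_pM\times\f{so}(T_pM)$ that sends a formal Killing field $\xi$ to the pair $(X,A)$ consisting of its zero-order term $X\in T_pM$ and its (necessarily skew-symmetric) first-order term $A\in\f{so}(T_pM)$, and then to show that $\Phi$ is a linear isomorphism onto $\f{kill}$, that it intertwines the two brackets, and that it carries $\f{i}^0$ onto $\f{kill}^0$; transitivity of $(\f{kill},\f{kill}^0)$ will then follow from that of $(\f{i},\f{i}^0)$ across the isomorphism. Injectivity of $\Phi$ is immediate from the fact, recalled in the Preliminaries, that an element of $\f{i}$ is determined by its terms of order $0$ and $1$. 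It therefore remains to identify the image of $\Phi$ with $\f{kill}$ and to verify that $\Phi$ is a Lie algebra homomorphism.

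For the inclusion $\Phi(\f{i})\subseteq\f{kill}$, I would use that a (formal) infinitesimal isometry $\xi$ preserves the metric, hence its Levi-Civita connection, and therefore the curvature and all of its covariant derivatives: $\mathcal{L}_\xi\nabla^iR=0$ for every $i\geq0$. Rewriting the Lie derivative in covariant form through $\mathcal{L}_\xi=\nabla_\xi-\mathcal{A}$, where $\mathcal{A}$ is the derivation of the tensor algebra induced by $A=\nabla\xi$, and evaluating at $p$ turns each of these identities into exactly the Killing generator equation $A\cdot\nabla^iR_p+i_X\nabla^{i+1}R_p=0$ (with the sign convention fixed in the definition). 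The reverse inclusion is the substantial point: given $(X,A)\in\f{kill}$ one must reconstruct a formal Killing field with these data. Here one propagates the jet order by order using the second-order Killing equation, which expresses $\nabla\nabla\xi$ in terms of the curvature $R(\xi,\,\cdot\,)\,\cdot$, together with its prolongations; these determine the term of order $r+1$ from the lower-order terms and the curvature. The Killing generator equations are precisely the integrability conditions guaranteeing that this recursively defined series solves the full prolonged system \eqref{Lie equation}, i.e.\ lies in $\f{i}$. Establishing this consistency is the main obstacle, and is where the general prolongation theory of \cite{Spiro} does the work.

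Finally, to see that $\Phi$ preserves brackets I would compute the terms of order $0$ and $1$ of $[\xi,\eta]$ for $\xi\leftrightarrow(X,A)$ and $\eta\leftrightarrow(Y,B)$. The zero-order term is produced by the linear parts alone and reproduces the $T_pM$-component of $[(X,A),(Y,B)]$, while the first-order term splits into the commutator $[A,B]$, coming from composing the linear parts, and a curvature contribution $(R_p)_{XY}$ arising from the Ricci identity, that is, from the failure of second covariant derivatives to commute once the second-order Killing equation is substituted. This matches the prescribed bracket, so $\Phi$ is a Lie algebra isomorphism. Since $\f{i}^0$ and $\f{kill}^0$ are both characterized by the vanishing of the zero-order term, one has $\Phi(\f{i}^0)=\f{kill}^0$, and hence $(\f{kill},\f{kill}^0)$ is a transitive Lie algebra isomorphic to $(\f{i},\f{i}^0)$, the transitivity being inherited across $\Phi$.
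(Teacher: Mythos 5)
Your proposal is correct and follows essentially the same route as the paper: the paper's (very terse) proof defines exactly this map by sending a formal solution to its order-$0$ and order-$1$ terms (written in normal coordinates) and asserts it is a Lie algebra homomorphism, deferring the substance to \cite{Spiro}. Your write-up simply makes explicit the points the paper leaves implicit --- injectivity from determination by the $0$- and $1$-jets, the inclusion $\Phi(\f{i})\subseteq\f{kill}$ via $\mathcal{L}_\xi\nabla^iR=0$, surjectivity via prolongation (correctly identified as the part carried by Spiro's theory), and the bracket computation.
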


\begin{proof}
Let $(x^1,\ldots,x^m)$ be a set of normal coordinates around $p$.
We consider the map
$$\begin{array}{rcl}
\f{i} & \to & \f{kill}\\
(\xi^i,\xi^i_j) & \mapsto &
(\xi^i\partial_{x^i|p},\xi^i_j\partial_{x^i|p}\otimes dx^j_{|p}),
\end{array}$$
where $(\xi^i,\xi^i_j)$ are the terms of order $0$ and $1$
characterizing an element $\xi\in \f{i}$. As a straightforward
computation shows, this map defines a Lie algebra homomorphism.
\end{proof}

\bigskip

Let now $\xi$ be a local vector field on $M$, we define the
$(1,1)$-tensor field
$$A_{\xi}=\mathcal{L}_{\xi}-\nabla_{\xi}=-\nabla \xi,$$
where $\mathcal{L}$ denotes Lie derivative. Among the equations
that $\xi$ must satisfy at $p$, we have
$$(\mathcal{L}_{\xi}g)_p=0,\hspace{1em}(\mathcal{L}_{\xi}\nabla^iR)_p=0,\hspace{1em} i\geq 0,$$
which coincide with
$$A\cdot g_p=0,\hspace{1em}A\cdot \nabla^iR_p+i_{X}\nabla^{i+1}R_p=0,\hspace{1em} i\geq 0,$$
for $X=\xi_p$ and $A=A_{\xi}|_{p}$, whence $(\xi_p,A_{\xi}|_{p})$
is a Killing generator.

\begin{corollary}\label{corollary realized germs}
Every formal solution $\xi\in\f{i}$ is realized by the germ of a
local Killing vector field.
\end{corollary}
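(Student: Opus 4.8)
The plan is to recast the statement as a surjectivity assertion and then resolve it by an integration argument. Write $\mathcal{K}$ for the Lie algebra of germs at $p$ of local Killing vector fields. The computation immediately preceding this corollary produces a Lie algebra homomorphism
$$\Psi\colon\mathcal{K}\to\f{kill},\qquad \xi\mapsto(\xi_p,A_{\xi}|_p),$$
and under the isomorphism $(\f{i},\f{i}^0)\cong(\f{kill},\f{kill}^0)$ of Lemma \ref{lemma isomorphism with Killing gen} the Taylor-expansion map $\mathcal{K}\to\f{i}$ is identified with $\Psi$ (up to the sign of the order-one term). Hence the assertion that every formal solution is realized is precisely the surjectivity of $\Psi$. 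I would first record that $\Psi$ is injective, since a local Killing field is determined by its $1$-jet at $p$ --- the same fact, used in the Preliminaries, that an element of $\f{i}$ is fixed by its terms of order $0$ and $1$. Thus $\mathcal{K}\cong\mathrm{im}\,\Psi$ is a subalgebra of $\f{kill}$, and the task is to show it is all of $\f{kill}$.

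Local homogeneity gives the order-zero part for free: for every $X\in T_pM$ there is a local Killing field $\xi$ with $\xi_p=X$, so $\mathrm{im}\,\Psi+\f{kill}^0=\f{kill}$. Everything therefore reduces to showing that the ``formal isotropy'' $\f{kill}^0=\{(0,A):A\cdot\nabla^iR_p=0,\ i\geq 0\}$ is contained in $\mathrm{im}\,\Psi$; that is, that every $A\in\f{so}(T_pM)$ annihilating all the $\nabla^iR_p$ arises as $A_\xi|_p$ for an honest local Killing field $\xi$ vanishing at $p$. This is the crux, and it is genuinely a passage from a formal, all-order, single-point condition to an integrated object.

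To integrate I would use the Kostant--Nomizu connection $D$ on $E=TM\oplus\f{so}(T_pM)$ whose parallel sections are exactly the pairs $(\xi,-\nabla\xi)$ with $\xi$ a Killing field. Its curvature and iterated covariant derivatives at $p$ encode precisely $R_p,\nabla R_p,\nabla^2R_p,\dots$, so that the Killing generators are exactly the elements of $E_p$ annihilated by the infinitesimal holonomy of $D$ at $p$. A $D$-parallel extension of such an element exists on a simply connected neighbourhood as soon as the element is invariant under the restricted holonomy group, and by the Ambrose--Singer holonomy theorem the holonomy algebra is generated by parallel transports of the curvature of $D$.

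The main obstacle is precisely to match the infinitesimal holonomy at $p$, which is what the Killing-generator conditions control, with the full holonomy algebra, which controls the existence of the parallel section; in general these differ, and this is the step where the hypotheses must be spent. I would overcome it by invoking that a locally homogeneous pseudo-Riemannian metric is real-analytic in suitable coordinates, so that the infinitesimal and the full holonomy of $D$ coincide, whence every Killing generator extends to a $D$-parallel section and thus to a germ of local Killing field. Equivalently, and more in the spirit of the Preliminaries, one may appeal directly to the realization theorem for formally integrable analytic transitive Lie equations in \cite{Spiro}: the system \eqref{Lie equation} is of this type, its formal solutions constitute $\f{i}$, and the theorem identifies them with germs of actual solutions, i.e.\ local Killing vector fields.
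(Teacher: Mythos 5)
Your reduction of the statement to the realization of Killing generators, and your use of the connection $D$ on $TM\oplus\f{so}(TM)$ whose parallel sections are the pairs $(\xi,\nabla\xi)$ with $\xi$ Killing, are sound, and you correctly isolate the crux: the Killing-generator equations only say that $(X,A)$ is annihilated by the \emph{infinitesimal} holonomy of $D$ at $p$, whereas the existence of a parallel extension requires invariance under the restricted holonomy. The gap is in how you close this distance. You invoke the assertion that a locally homogeneous pseudo-Riemannian metric is real-analytic in suitable coordinates; this is not a result you can cite in indefinite signature. The known proof of analyticity in the Riemannian case goes through the canonical AS-connection of Kowalski--Tricerri \cite{Ko2} (parallel curvature and torsion force analyticity), and that construction uses the definiteness of the Killing form of $\f{so}(T_pM)$; indeed the present paper exists precisely because a locally homogeneous pseudo-Riemannian manifold need \emph{not} carry any AS-connection at all (see the non-reductive examples of Section \ref{section reductivity condition}), so this route to analyticity is closed. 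Your alternative --- quoting the realization theorem of \cite{Spiro} --- founders on the same point: that theorem concerns \emph{analytic} transitive Lie equations, while the metric here is merely smooth; and since smooth-to-analytic statements of this kind are normally derived \emph{after} one knows that formal solutions are realized, the appeal is close to circular. A secondary gap of the same nature: your claim that ``local homogeneity gives the order-zero part for free'' (a local Killing field with any prescribed value $\xi_p=X$) does not follow from transitivity of the pseudo-group of local isometries --- manufacturing a $1$-parameter family of isometries out of the existence of individual local isometries is exactly the formal-to-actual integration problem the corollary is about.

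The paper closes the distance by a different, purely smooth-category mechanism: Nomizu's argument \cite{Nomizu}, adapted to metrics with signature, shows that if the dimension of the Lie algebra of Killing generators $\f{kill}_q$ is \emph{constant in} $q\in M$, then every Killing generator at $p$ is realized by a local Killing vector field. Constancy of this dimension is what local homogeneity really does hand you for free (a local isometry carrying $q$ to $q'$ conjugates $\f{kill}_q$ onto $\f{kill}_{q'}$), and in Nomizu's argument it plays exactly the role you wanted analyticity to play: it makes $\bigcup_{q}\f{kill}_q$ a subbundle of $TM\oplus\f{so}(TM)$ preserved by $D$-parallel transport, after which the radial extension of a given $(X,A)$ is parallel and its $TM$-component is the desired Killing field. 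Note also that this realizes arbitrary generators $(X,A)$, not only isotropy ones, so your splitting into an order-zero part and an isotropy part is unnecessary. If you want to keep your holonomy formulation, the repair is to replace the appeal to analyticity by this constant-rank argument.
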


\begin{proof}
Adapting the arguments used by Nomizu in \cite{Nomizu} to metrics
with signature, we see that if the dimension of the Lie algebra of
Killing generators is constant on $M$, then for every Killing
generator $(X,A)$ at a point $p$, there exist a local Killing
vector field $\xi$ with $(X,A)=(\xi_p,A_{\xi}|_{p})$.
\end{proof}

\bigskip

The Lie algebra isomorphism exhibited in the proof of Lemma
\ref{lemma isomorphism with Killing gen} can be seen as
$$\begin{array}{rcl}
\f{i} & \to & \f{kill}\\
\left[ \xi \right] & \mapsto & (\xi_p,A_{\xi}|_{p}),
\end{array}$$
where $[\xi]$ denotes the germ of the local vector field $\xi$ at
$p$.

\section{Reductive locally homogeneous\\ pseudo-Riemannian manifolds}

We now consider a Lie pseudo-group $\mathcal{G}\subset\mathcal{I}$
acting transitively on $(M,g)$. A Lie subalgebra
$\f{g}\subset\f{i}$ can be attached to $\mathcal{G}$, namely
$\f{g}$ is the set of germs of local Killing vector fields with
$1$-parameter group contained in $\mathcal{G}$. The Lie algebra
$\f{k}$ formed by those $[\xi]\in\f{g}$ vanishing at $p$ is thus a
Lie subalgebra of $\f{i}^0$, and the pair $(\f{g},\f{k})$ is a
transitive Lie algebra.

\begin{definition}\label{definition linear isotropy group}
Let $\mathcal{G}$ be a Lie pseudo-group of isometries acting
transitively on $(M,g)$. The isotropy pseudo-group at a point $p$
is
$$\mathcal{H}_p=\{f\in\mathcal{G}/\, f(p)=p\}.$$
\end{definition}

Since $f(p)=p$ is not a differential equation, $\mathcal{H}_p$ is
not a Lie pseudo-group in general. For this reason it is more
convenient to work with the so called linear isotropy group.

\begin{definition}
The linear isotropy group of $\mathcal{G}$ at $p\in M$ is
$$H_p=\{F:T_pM\to T_pM /\, F=f_*,\,f\in\mathcal{H}_p\}.$$
\end{definition}

Since every $f\in\mathcal{H}_p$ is an isometry, $H_p$ is a Lie
subgroup of $\Lie{O}(T_pM)$.

\begin{lemma}
The Lie algebra $\f{h}_p$ of $H_p$ is isomorphic to $\f{k}$.
\end{lemma}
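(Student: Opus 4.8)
The plan is to exhibit an explicit Lie algebra isomorphism $\f{h}_p\to\f{k}$ and verify it is well-defined, linear, bijective, and bracket-preserving. The natural candidate sends the infinitesimal generator of a one-parameter subgroup of $H_p$ to the germ of the corresponding local Killing field in $\f{k}$. More precisely, given $A\in\f{h}_p\subset\f{so}(T_pM)$, I would choose a curve $f_t\in\mathcal{H}_p$ of elements of the isotropy pseudo-group with $(f_t)_{*p}=\exp(tA)$, realize each $f_t$ by (the one-parameter group of) a local Killing field $\xi\in\f{g}$, and map $A$ to $[\xi]\in\f{k}$. I would carry this out as follows.

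First I would make the correspondence precise at the level of germs. By definition $\f{k}$ consists of those germs $[\xi]\in\f{g}$ with $\xi_p=0$; under the isomorphism $\f{g}\hookrightarrow\f{i}\xrightarrow{\sim}\f{kill}$ of Lemma \ref{lemma isomorphism with Killing gen} these correspond exactly to Killing generators $(\xi_p,A_\xi|_p)=(0,A_\xi|_p)$ lying in $\f{kill}^0$, i.e.\ to pairs $(0,A)$ with $A=A_\xi|_p=-\nabla\xi|_p\in\f{so}(T_pM)$. So $\f{k}$ is identified with a subalgebra of $\{0\}\times\f{so}(T_pM)\cong\f{so}(T_pM)$, and I define $\Phi\colon\f{k}\to\f{so}(T_pM)$ by $[\xi]\mapsto A_\xi|_p$; the claim is that the image of $\Phi$ is precisely $\f{h}_p$ and that $\Phi$ is an isomorphism onto it.

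Next I would check the two halves of the surjectivity/injectivity statement. For injectivity: if $[\xi]\in\f{k}$ has $A_\xi|_p=0$, then the associated Killing generator is $(0,0)$, and since by Lemma \ref{lemma isomorphism with Killing gen} a formal solution is determined by its terms of order $0$ and $1$, we get $[\xi]=0$; thus $\Phi$ is injective. For the identification of the image with $\f{h}_p$: a local Killing field $\xi$ with $\xi_p=0$ has a local flow $f_t$ fixing $p$, so $f_t\in\mathcal{H}_p$ and $(f_t)_{*p}\in H_p$, with $\frac{d}{dt}\big|_{t=0}(f_t)_{*p}=-\nabla\xi|_p=A_\xi|_p\in\f{h}_p$; this shows $\Phi(\f{k})\subseteq\f{h}_p$. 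Conversely, every $A\in\f{h}_p$ is $\frac{d}{dt}|_{t=0}(f_t)_{*p}$ for a curve $f_t\in\mathcal{H}_p$ in $\mathcal{G}$, and by Corollary \ref{corollary realized germs} each such infinitesimal isometry is realized by the germ of a local Killing field $\xi\in\f{g}$ vanishing at $p$, so $A\in\Phi(\f{k})$; hence $\Phi(\f{k})=\f{h}_p$.

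Finally I would verify that $\Phi$ is a Lie algebra homomorphism. Linearity is immediate from $A_\xi=-\nabla\xi$. For the bracket, the key point is that the bracket on $\f{kill}^0$ restricted to elements of the form $(0,A),(0,B)$ reduces, by the formula $[(X,A),(Y,B)]=(AX-BY,(R_p)_{XY}+[A,B])$, to $(0,[A,B])$ since $X=Y=0$ kills both the $AX-BY$ term and the curvature term $(R_p)_{XY}$. Thus $\Phi([\xi,\eta])=[\Phi(\xi),\Phi(\eta)]$, and $\Phi$ is a Lie algebra isomorphism $\f{k}\xrightarrow{\sim}\f{h}_p$, proving the lemma. \emph{The main obstacle} I anticipate is the converse inclusion $\f{h}_p\subseteq\Phi(\f{k})$: one must pass from an abstract element of the isotropy Lie algebra to an honest germ of a local Killing field, which is exactly where Corollary \ref{corollary realized germs} (and the constancy-of-dimension hypothesis underlying it) is indispensable; all the other steps are formal once this realization step is in hand.
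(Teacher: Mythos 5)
Your map is the one the paper uses (the paper defines $[\xi]\mapsto \dt (f_t)_*$ and immediately re-reads it as $[\xi]\mapsto A_{\xi}|_p$), and your injectivity argument, the bracket computation in $\f{kill}^0$, and the inclusion $\Phi(\f{k})\subseteq\f{h}_p$ are all sound. The gap is in the step you yourself single out as the crux, namely $\f{h}_p\subseteq\Phi(\f{k})$: Corollary \ref{corollary realized germs} cannot do what you ask of it. That corollary realizes a formal solution as the germ of \emph{some} local Killing vector field, i.e.\ it produces an element of $\f{i}$ (equivalently, it realizes a Killing generator $(0,A)\in\f{kill}$). It says nothing about the pseudo-group $\mathcal{G}$, which may be a \emph{proper} sub-pseudo-group of $\mathcal{I}$; membership in $\f{g}$ (hence in $\f{k}$) requires, by definition, that the one-parameter group of the realizing field be contained in $\mathcal{G}$, and the corollary gives no control whatsoever over that. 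So the assertion ``realized by the germ of a local Killing field $\xi\in\f{g}$'' is precisely the thing that needs proof. (A smaller omission: to invoke the corollary at all you must first check that $(0,A)$ is a Killing generator, i.e.\ $A\cdot\nabla^iR_p=0$ for all $i$; this does hold, because $H_p$ stabilizes each $\nabla^iR_p$, but it is not automatic.)

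The missing ingredient is the rigidity of local isometries: a local isometry of a connected pseudo-Riemannian manifold is determined by its value and differential at one point, and on a normal neighborhood of a fixed point it equals $\exp_p\circ f_{*p}\circ \exp_p^{-1}$. Given $A\in\f{h}_p$, one has $\exp(tA)\in H_p$ for every $t$, so for each $t$ there is $f_t\in\mathcal{H}_p$ with $(f_t)_{*p}=\exp(tA)$; rigidity makes the germ of $f_t$ unique, equal to $\exp_p\circ\exp(tA)\circ\exp_p^{-1}$, hence smooth in $t$ and satisfying $f_{t+s}=f_t\circ f_s$ as germs. The generator $\xi=\dt f_t$ is then a local Killing field vanishing at $p$ whose flow lies in $\mathcal{H}_p\subset\mathcal{G}$, so $[\xi]\in\f{k}$ and $\Phi([\xi])=A$ (up to the sign relating $\dt(f_t)_{*}$ and $A_{\xi}|_p$, a convention the paper also glosses over). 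Alternatively, you can keep your corollary-based construction and repair it at the end: the flow $\varphi_t$ of the realized field fixes $p$ and has $(\varphi_t)_{*p}\in H_p$, hence by rigidity coincides near $p$ with an element of $\mathcal{H}_p$, so it does lie in $\mathcal{G}$. Either way, it is rigidity of isometries, not Corollary \ref{corollary realized germs}, that closes the proof.
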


\begin{proof}
We define the map
$$\begin{array}{rcl}
\f{k} & \to & \f{h}_p\\
\left[ \xi\right] & \mapsto & \dt (f_t)_*,
\end{array}$$
where $f_t\subset \mathcal{H}_p$ is the $1$-parameter group
generated by $\xi$. A simple inspection shows that this map is a
Lie algebra isomorphism.
\end{proof}

\bigskip

Note that seeing $\f{h}_p$ as a subalgebra of $\f{so}(T_pM)$, the
previous isomorphism between $\f{k}$ and $\f{h}_p$ can be read as
$$\begin{array}{rcl}
\f{k} & \to & \f{h}_p\\
\left[ \xi\right] & \mapsto & A_{\xi}|_{p}.
\end{array}$$
There is a natural action of $H_p$ on $\f{g}$ given by
$$\begin{array}{rrcl}
\mathrm{Ad}: & H_p\times \f{g} & \to & \f{g}\\
             & (F,\left[ \xi\right]) & \mapsto & \left[\eta\right],
\end{array}$$
with
$$\eta_q=\dt f\circ \varphi_t\circ f^{-1}(q),$$
for every $q$ in a certain neighborhood of $p$, where $\varphi_t$
is the $1$-parameter group generated by $[\xi]$, and $F=f_*$. When
identifying $\f{k}$ with $\f{h}_p$, the restriction of this action
to $\f{k}$ is just the usual adjoint action of $H_p$ on its Lie
algebra.

\begin{definition}
Let $(M,g)$ be a pseudo-Riemannian manifold, and let $\mathcal{G}$
be a Lie pseudo-group of isometries acting transitively on
$(M,g)$. We say that the triple $(M,g,\mathcal{G})$ is reductive
if the transitive Lie algebra $(\f{g},\f{k})$ associated with
$\mathcal{G}$ can be decomposed as $\f{g}=\f{m}\oplus \f{k}$,
where $\f{m}$ is $\mathrm{Ad}(H_p)$-invariant.
\end{definition}

Note that being reductive is a property of the triple
$(M,g,\mathcal{G})$ rather than a property of the
pseudo-Riemannian manifold $(M,g)$ itself. In Section \ref{section
reductivity condition} we will show that the same locally
homogeneous pseudo-Riemannian manifold can be reductive for the
action of a certain Lie pseudo-group $\mathcal{G}$, whereas it is
non-reductive for the action of another Lie pseudo-group
$\mathcal{G}'$. On the other hand, it seems that the previous
definition depends on the chosen point $p\in M$, however

\begin{proposition}
If $(M,g,\mathcal{G})$ is reductive at a point $p\in M$, then it
is reductive at every point.
\end{proposition}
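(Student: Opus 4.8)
The plan is to transport the reductive decomposition from $p$ to an arbitrary point $q$ by means of a local isometry of $\mathcal{G}$ carrying $p$ to $q$, and then to check that this transport is compatible with the respective linear isotropy actions. Since $\mathcal{G}$ acts transitively I would first choose $f\in\mathcal{G}$ with $f(p)=q$, and consider push-forward of Killing fields by $f$, giving a map $\mathrm{Ad}(f)\colon\f{g}\to\f{g}$, $\mathrm{Ad}(f)[\xi]=[f_*\xi]$. This is well defined on $\f{g}$ because if $\varphi_t$ is the $1$-parameter group of $\xi$ then $f\circ\varphi_t\circ f^{-1}$ is the $1$-parameter group of $f_*\xi$, and as $\mathcal{G}$ is closed under composition and inversion these flows again lie in $\mathcal{G}$. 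Being push-forward by a diffeomorphism, $\mathrm{Ad}(f)$ is a linear (indeed Lie algebra) automorphism of $\f{g}$ with inverse $\mathrm{Ad}(f^{-1})$.

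Writing $\f{k}_p$ for the isotropy subalgebra at $p$ (denoted $\f{k}$ above) and $\f{k}_q$ for its analogue at $q$ inside the same $\f{g}$, note that if $\xi_p=0$ then $(f_*\xi)_q=f_*(\xi_p)=0$, so $\mathrm{Ad}(f)$ maps $\f{k}_p$ into $\f{k}_q$; applying the same reasoning to $f^{-1}$ shows $\mathrm{Ad}(f)\colon\f{k}_p\to\f{k}_q$ is a bijection. I would then set $\f{m}_q=\mathrm{Ad}(f)(\f{m})$. Since $\mathrm{Ad}(f)$ is a bijection carrying the splitting $\f{m}\oplus\f{k}_p=\f{g}$ onto $\f{m}_q\oplus\f{k}_q$, this yields the decomposition $\f{g}=\f{m}_q\oplus\f{k}_q$.

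The key remaining point is the $\mathrm{Ad}(H_q)$-invariance of $\f{m}_q$. The essential observation is that conjugation by $f$ relates the isotropy pseudo-groups: if $h\in\mathcal{H}_p$ then $fhf^{-1}$ fixes $q=f(p)$, and conversely every $g\in\mathcal{H}_q$ has the form $fhf^{-1}$ with $h=f^{-1}gf\in\mathcal{H}_p$; hence $\mathcal{H}_q=f\mathcal{H}_pf^{-1}$ and, passing to linear isotropy, $H_q=f_*H_pf_*^{-1}$. Recalling that for $g\in\mathcal{H}_q$ the action $\mathrm{Ad}(g_*)$ on $\f{g}$ is again push-forward by $g$, the identity $(fhf^{-1})_*=f_*\circ h_*\circ f_*^{-1}$ on vector fields gives
$$\mathrm{Ad}(g_*)=\mathrm{Ad}(f)\circ\mathrm{Ad}(h_*)\circ\mathrm{Ad}(f)^{-1}.$$
Therefore, for $Y\in\f{m}$,
$$\mathrm{Ad}(g_*)\big(\mathrm{Ad}(f)Y\big)=\mathrm{Ad}(f)\big(\mathrm{Ad}(h_*)Y\big)\in\mathrm{Ad}(f)(\f{m})=\f{m}_q,$$
because $\f{m}$ is $\mathrm{Ad}(H_p)$-invariant and $h_*\in H_p$. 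Letting $g$ range over $\mathcal{H}_q$ shows $\f{m}_q$ is $\mathrm{Ad}(H_q)$-invariant, so $(M,g,\mathcal{G})$ is reductive at $q$.

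I expect the main obstacle to be not the algebra of this conjugation argument but the bookkeeping forced by the local, germ-theoretic nature of the objects: one must ensure that $\mathrm{Ad}(f)$ is genuinely well defined on germs and sends $\f{g}$ into itself, and that the conjugation identity $\mathcal{H}_q=f\mathcal{H}_pf^{-1}$ holds as an identity of germs on suitably shrunk, overlapping domains on which $f$, $f^{-1}$ and the relevant elements of $\mathcal{G}$ are simultaneously defined. Once the domains are arranged so that all the compositions above make sense, the identifications are immediate.
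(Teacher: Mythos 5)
Your proof is correct and follows essentially the same route as the paper: transport the decomposition by push-forward along a local isometry $f\in\mathcal{G}$ with $f(p)=q$, and verify $\mathrm{Ad}(H_q)$-invariance of $\f{m}_q=\mathrm{Ad}(f)(\f{m})$ via the conjugation identity $\mathcal{H}_q=f\,\mathcal{H}_p\,f^{-1}$, exactly as the paper computes $\mathrm{Ad}_F([\eta])=h_*\bigl(\mathrm{Ad}_{\check{h}^{-1}(F)}([\xi])\bigr)$. The only difference is that the paper additionally proves that $\f{m}_q$ does not depend on the choice of the transporting isometry, a well-definedness statement that is not required for the proposition as stated.
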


\begin{proof}
Let $q$ be another point of $M$. We denote by $(\f{g}_p,\f{k}_p)$
and $(\f{g}_q,\f{k}_q)$ the transitive Lie algebras associated
with $\mathcal{G}$ at $p$ and $q$ respectively. Let $h\in
\mathcal{G}$ be a local isometry with $h(p)=q$. $h$ induces
isomorphisms $\hat{h}:\f{g}_p\to \f{g}_q$, $[\xi]\mapsto
[h_*(\xi)]$, and $\check{h}:H_p\to H_p$, $F\mapsto h_*\circ F\circ
h_*^{-1}$. Let $\f{g}_p=\f{m}_p\oplus\f{k}_p$ with $\f{m}_p$
$\mathrm{Ad}(H_p)$-invariant, we define
$\f{m}_q=\hat{h}(\f{m}_p)\subset\f{g}_q$. It is obvious that
$\f{g}_q=\f{m}_q\oplus\f{k}_q$, since $\hat{h}$ is an isomorphism
and takes $\f{k}_p$ to $\f{k}_q$. We now show that $\f{m}_q$ is
$\mathrm{Ad}(H_q)$-invariant and independent of the local isometry
$h$. Let $F\in H_q$, and let $f\in \mathcal{H}_q$ with $F=f_*$.
Let $[\eta]\in \f{m}_q$, there is an element $[\xi]\in \f{m}_p$
with $\eta=h_*(\xi)$. The $1$-parameter group generated by $\eta$
is thus $\phi_t=h\circ \varphi_t\circ h^{-1}$, where $\varphi_t$
is the $1$-parameter group generated by $\xi$. Therefore
\begin{align*}
\mathrm{Ad}_F([\eta]) & = \left[\dt f\circ \phi_t\circ
f^{-1}\right]\\
 & = \left[\dt f\circ h\circ \varphi_t\circ h^{-1}\circ
 f^{-1}\right]\\
 & = \left[\dt h\circ h^{-1}\circ f\circ h\circ \varphi_t\circ h^{-1}\circ
 f^{-1}\circ  h\circ h^{-1}\right]\\
 & = h_*\left(\mathrm{Ad}_{\check{h}^{-1}(F)}([\xi])\right).
\end{align*}
Since $\check{h}^{-1}(F)\in H_p$, we have
$\mathrm{Ad}_F([\eta])\in\f{m}_q$. On the other hand, in order to
prove the independence of $h$, it is enough to prove that for
other $h'\in\mathcal{G}$ with $h'(p)=q$ we have that
$h_*^{-1}\circ h'_*([\xi])\in \f{m}_p$. But
$$h^{-1}_*\circ h'_*([\xi])  = \left[\dt h^{-1}\circ h'\circ
\varphi_t\right]=\mathrm{Ad}_{(h^{-1}\circ h')_*}([\xi]).$$ Since
$h^{-1}\circ h'\in\mathcal{H}_p$ and $\f{m}_p$ is
$\mathrm{Ad}(H_p)$-invariant we conclude that $h^{-1}_*\circ
h'_*([\xi])\in\f{m}_p$.
\end{proof}

\bigskip

Following \cite{Ambrose-Singer} we give the following definition.

\begin{definition}
An Ambrose-Singer connection (AS-connection for sort), is a linear
connection $\wnabla$ satisfying
\begin{equation*}
\wnabla g=0,\qquad \wnabla R=0,\qquad \wnabla S=0,
\end{equation*}
where $S=\wnabla-\nabla$.
\end{definition}

The following two theorems characterize locally homogeneous
pseudo-Rie\-mannian manifolds admitting an AS-connection.

\begin{theorem}\label{theorem 1}
Let $(M,g,\mathcal{G})$ be a reductive locally homogeneous
pseudo-Rie\-mannian manifold. Then $(M,g)$ admits an
AS-connection.
\end{theorem}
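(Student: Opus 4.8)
The plan is to construct explicitly the homogeneous structure tensor attached to the reductive decomposition, that is, the local analogue of the canonical connection of a reductive homogeneous space, working entirely with germs of local Killing fields. First I would fix $p\in M$ and use transitivity of $(\f{g},\f{k})$ to identify $\f{m}$ with $T_pM$ through the evaluation map $[\xi]\mapsto\xi_p$: since $\f{k}$ is precisely the kernel of this evaluation and $\f{g}=\f{m}\oplus\f{k}$, its restriction to $\f{m}$ is a linear isomorphism. Thus every $X\in T_pM$ determines a unique germ $[\xi_X]\in\f{m}$ with $(\xi_X)_p=X$, and by Corollary \ref{corollary realized germs} this germ is realized by a genuine local Killing field.

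Next I would propose the connection $\wnabla=\nabla+S$, where the $(1,2)$-tensor $S$ is prescribed at $p$ by $S_X:=A_{\xi_X}|_p=-(\nabla\xi_X)|_p$ for $X\in T_pM$. Both $X\mapsto[\xi_X]$ and $[\xi]\mapsto A_\xi|_p$ are linear, so $S|_p$ is a well-defined tensor, and each $S_X\in\f{so}(T_pM)$ because $\xi_X$ is Killing. To spread $S$ over a neighbourhood of $p$ I would transport it by the local isometries: for $q=f(p)$ with $f\in\mathcal{G}$, set $S_q:=f_*(S_p)$. Pointwise this coincides with $S_v=A_\eta|_q$, where $\eta\in\f{m}_q$ is the unique germ with $\eta_q=v$ and $\f{m}_q$ is the reductive complement at $q$ furnished by the preceding proposition.

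The key step, and the place where reductivity is indispensable, is to check that this extension is well defined, i.e. that $S|_p$ is invariant under the linear isotropy group $H_p$. Under the identification $\f{m}\cong T_pM$ the $\mathrm{Ad}(H_p)$-action on $\f{m}$ corresponds to the isotropy representation on $T_pM$, and $[\xi]\mapsto A_\xi|_p$ is $H_p$-equivariant by naturality; since $\f{m}$ is $\mathrm{Ad}(H_p)$-invariant, the assignment $X\mapsto A_{\xi_X}|_p$ is $H_p$-equivariant, so $S|_p$ is $H_p$-invariant. Arguing exactly as in the proof of the preceding proposition, two isometries $f,f'$ with $f(p)=f'(p)=q$ differ by an element of $\mathcal{H}_p$ whose differential lies in $H_p$, under which $S_p$ is fixed; hence $f_*(S_p)=f'_*(S_p)$ and $S$ becomes a well-defined $\mathcal{G}$-invariant tensor field on a neighbourhood of $p$.

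Finally I would deduce the three Ambrose--Singer equations from a single observation: every $\mathcal{G}$-invariant tensor field $T$ is $\wnabla$-parallel. Indeed, fix $q$ and $v\in T_qM$, choose $\eta\in\f{m}_q$ with $\eta_q=v$, and recall that on tensors $\mathcal{L}_\eta=\nabla_\eta+A_\eta\cdot$; since $\mathcal{L}_\eta T=0$ we obtain $\nabla_vT=-A_\eta|_q\cdot T_q=-S_v\cdot T_q$, whence $\wnabla_vT=\nabla_vT+S_v\cdot T_q=0$. Applying this to $T=g$ (invariant, and in any case $\wnabla g=0$ already because $S_v\in\f{so}$), to $T=R$ (invariant since $\mathcal{G}$ acts by isometries, so $\nabla$ and $R$ are preserved), and to $T=S$ (invariant by construction) yields $\wnabla g=\wnabla R=\wnabla S=0$, so $\wnabla$ is an AS-connection. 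The main obstacle is thus the $H_p$-invariance of $S|_p$: without reductivity $\f{m}$ need not be $\mathrm{Ad}(H_p)$-invariant, $S$ fails to extend consistently, and the argument collapses, which is exactly why the reductivity hypothesis is forced in the pseudo-Riemannian setting.
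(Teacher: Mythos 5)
Your proposal is correct, and it reaches the same connection as the paper but by a genuinely different construction. The paper works on the bundle of orthonormal frames: it forms the reduction $Q=\{\widetilde{h}(u_0)\}\subset\mathcal{O}(M)$ with structure group isomorphic to $H_p$, takes $H_{u_0}Q=\Psi(\f{m})$ as horizontal subspace, spreads it by $\mathcal{G}$, and uses $\mathrm{Ad}(H_p)$-invariance of $\f{m}$ to prove right-equivariance, so that $HQ$ is a principal connection; the AS equations then follow from the characterization of its parallel transport as differentials $h_*$ of elements of $\mathcal{G}$. You instead build the difference tensor directly on $M$: $S_X=A_{\xi_X}|_p$ for the unique germ $\xi_X\in\f{m}$ with $(\xi_X)_p=X$ (the local analogue of the Nomizu/Tricerri--Vanhecke homogeneous structure tensor), propagate it by $\mathcal{G}$, and replace the parallel-transport characterization by the single observation $\mathcal{L}_\eta=\nabla_\eta+A_\eta\cdot$, which makes every $\mathcal{G}$-invariant tensor $\wnabla$-parallel. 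Reductivity enters at the same spot in both arguments (equivariance of the horizontal distribution in the paper, $H_p$-invariance of $S_p$ for you), and your identification $\xi_{FX}=[f_*\xi_X]$ is exactly where $\mathrm{Ad}(H_p)$-invariance of $\f{m}$ is indispensable, matching your closing remark. What each approach buys: yours is more elementary and explicit --- no principal-bundle machinery, and an explicit formula $S_X=-(\nabla\xi_X)|_p$ for the homogeneous structure --- while the paper's frame-bundle construction produces the reduced bundle $Q$ and the parallel-transport description of $\wnabla$, which are precisely the objects reused in the converse (Theorem \ref{theorem 2}, where $Q$ reappears as the holonomy bundle) and in the extension to invariant geometric structures. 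One shared elision: both arguments assert without proof that the propagated object ($S_q=f_*(S_p)$ for you, $H_uQ=\widetilde{h}_*(H_{u_0}Q)$ in the paper) depends smoothly on the point, which requires choosing elements of the pseudo-group smoothly in $q$; since the paper glosses this identically, it is not a gap specific to your argument.
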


\begin{proof}
Let $(r,s)$ be the signature of $g$, and let $\mathcal{O}(M)$ be
the bundle of orthonormal references of $M$. We fix a point $p\in
M$ and a reference $u_0\in\mathcal{O}(M)$ in the fiber of $p$. We
shall interpret an orthonormal reference $u$ at $q\in M$ as an
isometry $u:(\R^{m},\langle\,,\,\rangle)\to (T_qM,g_q)$, where
$\langle\,,\,\rangle$ is the standard metric of $\R^m$ with
signature $(r,s)$. Consider the set
\begin{equation}\label{bundle Q}Q=\{u\in\mathcal{O}(M)/\, u=\widetilde{h}(u_0),\, h\in
\mathcal{G}\},\end{equation} where $\widetilde{h}$ is the map
induced on $\mathcal{O}(M)$ by a local isometry $h$. $Q$
determines a reduction of $\mathcal{O}(M)$ with structure group
$$\bar{H}=\{B\in\Lie{O}(r,s)/\, \hat{u}_0(B)=f_*,\, f\in \mathcal{H}_p\},$$
where $\hat{u}_0:\Lie{O}(r,s)\to \Lie{O}(T_pM)$, $B\mapsto
u_0\circ B\circ u_0^{-1}$. It is obvious that $\hat{u}_0$ gives an
isomorphism between $\bar{H}$ and the linear isotropy group $H_p$.
The right action of an element $B\in\bar{H}$ on a reference $u\in
Q$ at $q$ is given by $R_B(u)=u\circ B:\R^m\to T_qM$. Let
$F=\hat{u}_0(B)\in H_p$ and let $f\in \mathcal{H}_p$ such that
$F=f_*$. Let $h\in \mathcal{G}$ be such that
$u=\widetilde{h}(u_0)$, we can write
\begin{align*}
R_B(u) & = u\circ B = u\circ u_0\circ F\circ u_0\\
 & = h_*\circ u_0\circ u_0^{-1}\circ F\circ u_0= h_*\circ f_*\circ
 u_0\\
 & = \widetilde{h}\circ
 \widetilde{f}(u_0).
\end{align*}
We now consider the map
$$\begin{array}{rrcl}
\Psi: & \f{g} & \to & T_{u_0}Q\\
            & \left[\xi\right] & \to & \dt
            \widetilde{\varphi_t}(u_0),
\end{array}$$
where $\varphi_t$ is the $1$-parameter group of $\xi$. $\Psi$ is
injective as $\{\varphi_t\}\subset\mathcal{G}$ and the action of
$\mathcal{G}$ on $Q$ is free. Moreover,
$$\mathrm{dim}(\f{g})=\mathrm{dim}(T_pM)+\mathrm{dim}(\f{k})=\mathrm{dim}(T_pM)+\mathrm{dim}(V_{u_0}Q)=\mathrm{dim}(T_{u_0}Q),$$
whence $\Psi$ is a linear isomorphism. Let
$\f{g}=\f{m}\oplus\f{k}$ be a reductive decomposition, we define
the horizontal subspace at $u_0$ as
$$H_{u_0}Q=\Psi(\f{m}),$$
and making use of $\mathcal{G}$ we define an horizontal
distribution on $Q$ as
$$H_uQ=\widetilde{h}_*(H_{u_0}),\qquad u=\widetilde{h}(u_0).$$
This horizontal distribution is $\mathcal{C}^{\infty}$ and
invariant by $\mathcal{G}$. In order to see that $HQ$ defines a
linear connection $\wnabla$ on $M$ we just have to show that it is
equivariant by the right action of $\bar{H}$. Let $B\in \bar{H}$,
we take $F=\hat{u}_0(B)$, and $f\in\mathcal{H}_p$ with $F=f_*$.
Let $X_u\in H_uQ$, by definition $X_u=\widetilde{h}_*(X_{u_0})$
for some $X_{u_0}\in H_{u_0}Q$ and some $h$ such that
$u=\widetilde{h}(u_0)$. This means that
$X_u=\widetilde{h}_*(\Psi([\xi]))$ for some $[\xi]\in\f{m}$. Let
$\varphi_t$ be the $1$-parameter group generated by $\xi$, we thus
have
\begin{align*}
(R_B)_*(X_u) & = (R_B)_*\circ \widetilde{h}_*\circ \Psi([\xi])\\
  & = \dt R_B\circ \widetilde{h}\circ \widetilde{\varphi_t}(u_0)=\dt \widetilde{h}\circ\widetilde{\varphi_t}\circ\widetilde{f}(u_0)\\
  & =\dt
  \widetilde{h}\circ\widetilde{f}\circ\widetilde{f}^{-1}\circ\widetilde{\varphi_t}\circ
  \widetilde{f}(u_0)\\
   & = (\widetilde{h}\circ\widetilde{f})_*\left(\dt \widetilde{f}^{-1}\circ\widetilde{\varphi_t}\circ
  \widetilde{f}(u_0)\right)\\
   & =
   (\widetilde{h}\circ\widetilde{f})_*\left(\Psi(\mathrm{Ad}_{F^{-1}}([\xi]))\right).
\end{align*}
Since $\mathrm{Ad}_{F^{-1}}([\xi])\in\f{m}$, we have
$\Psi(\mathrm{Ad}_{F^{-1}}([\xi]))\in H_{u_0}Q$, and we conclude
that $(R_B)_*(X_u)\in H_{R_B(u)}Q$ since
$R_B(u)=\widetilde{h}\circ\widetilde{f}(u_0)$.

We now study the properties of $\wnabla$. On the one hand, since
$Q$ is a reduction of $\mathcal{O}(M)$, the connection $\wnabla$
is metric, that is, $\wnabla g=0$. On the other hand, the
connection $\wnabla$ is characterized in the following way. Let
$p,q\in M$, and let $\gamma$ be a path in $M$ with $\gamma(0)=p$
and $\gamma(1)=q$. We denote by $\bar{\gamma}$ the horizontal lift
of $\gamma$ to $u_0\in Q$ with respect to $\wnabla$. The parallel
transport along $\gamma$ with respect to this connection is thus
the linear isometry $\gamma:T_pM\to T_qM$ given by $\gamma=u\circ
u_0^{-1}$, where $u=\bar{\gamma}(1)$. But since
$u=\widetilde{h}(u_0)=h_*\circ u_0$ for some $h\in\mathcal{G}$, we
have that the linear isometry $\gamma$ is exactly $h_*$. This
characterization of $\wnabla$ implies that its torsion
$\widetilde{T}$ and curvature $\widetilde{R}$ are invariant by
parallel transport, since $\wnabla$ is invariant by $\mathcal{G}$,
that is $\wnabla \widetilde{T}=0$ and $\wnabla\widetilde{R}=0$. As
a straightforward computation shows this two equations are
equivalent to
$$\wnabla R=0,\qquad \wnabla S=0.$$
\end{proof}

\begin{theorem}\label{theorem 2}
Let $(M,g)$ be a pseudo-Riemannian manifold admitting an
AS-connection $\wnabla$. Then there is a Lie pseudo-group of
isometries $\mathcal{G}$ such that $(M,g,\mathcal{G})$ is
reductive locally homogeneous.
\end{theorem}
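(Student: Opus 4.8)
The plan is to reverse the construction in the proof of Theorem \ref{theorem 1}: starting from the AS-connection $\wnabla$ I will manufacture, inside the Lie algebra $\f{kill}$ of Killing generators at a fixed point $p$, a reductive transitive subalgebra, and then integrate it to a pseudo-group of local isometries by means of Corollary \ref{corollary realized germs}.

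First I would record the infinitesimal consequences of the defining equations. Writing $\nabla=\wnabla-S$ and using $\wnabla S=0$ and $\wnabla R=0$ together with the fact that $\wnabla$ acts as a derivation, an induction on $i$ gives $\wnabla\nabla^iR=0$ for all $i\geq 0$; equivalently the torsion $\widetilde T$ and curvature $\widetilde R$ of $\wnabla$, and every $\nabla^iR$, are $\wnabla$-parallel. Hence along any path $\wnabla$-parallel transport is a linear isometry intertwining all the tensors $\nabla^iR_p$, which is exactly the homogeneity information I want to exploit.

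Next I would build the candidate reductive decomposition in $\f{kill}$. Put $\f m=\{(X,S_X)\,:\,X\in T_pM\}$, where $S_X=S(X,\cdot)\in\f{so}(T_pM)$, and $\f k=\{(0,A)\,:\,A\in\widetilde{\f h}\}$, where $\widetilde{\f h}\subset\f{so}(T_pM)$ is the holonomy algebra of $\wnabla$ at $p$. Using $\nabla_X\nabla^iR=-S_X\cdot\nabla^iR$ from the first step one checks that $(X,S_X)$ solves the Killing-generator equations, so $\f m\subset\f{kill}$; and since every $\nabla^iR_p$ is annihilated by the holonomy algebra of the connection making it parallel, $(0,A)\in\f{kill}^0$ for $A\in\widetilde{\f h}$, so $\f k\subset\f{kill}^0$. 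The three closure relations for $\f g:=\f m\oplus\f k$ are then verified as follows. That $\widetilde{\f h}$ is a Lie algebra gives $[\f k,\f k]\subseteq\f k$. The reductivity relation $[\f k,\f m]\subseteq\f m$ is exactly the identity $[A,S_X]=S_{AX}$ for $A\in\widetilde{\f h}$, which is the vanishing of the derivation $A\cdot S$ forced by $\wnabla S=0$ (holonomy-invariance of $S$). Finally $[\f m,\f m]\subseteq\f g$ follows from the structure equation relating the Levi-Civita curvature to $\widetilde R$, $S$ and $\widetilde T$: the $T_pM$-component of $[(X,S_X),(Y,S_Y)]$ is $\widetilde T_{XY}$, and the residual $\f{so}(T_pM)$-component is precisely $\widetilde R_{XY}$, which lies in $\widetilde{\f h}$ by the Ambrose-Singer holonomy theorem (the curvature being parallel, $\widetilde{\f h}$ is spanned by the operators $\widetilde R_{XY}$). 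As $\f g\subset\f{kill}$, the Jacobi identity is inherited, so $\f g$ is a Lie subalgebra; $(\f g,\f k)$ is transitive because an ideal of $\f g$ contained in $\f k$ would have to bracket trivially with $\f m$ (using $\f m\cap\f k=0$), forcing it to vanish. Moreover $S$ is $\wnabla$-parallel, hence invariant under the holonomy group $\widetilde H$, so $\mathrm{Ad}_{\widetilde h}(X,S_X)=(\widetilde hX,S_{\widetilde hX})\in\f m$; thus $\f m$ is $\mathrm{Ad}(\widetilde H)$-invariant and $\f g=\f m\oplus\f k$ is a reductive decomposition.

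The last and, in my view, genuinely delicate step is the passage from this algebra of Killing generators to an actual Lie pseudo-group. I would invoke Corollary \ref{corollary realized germs}: the homogeneity secured in the first step (the $\wnabla$-parallel transports carry every $\nabla^iR_p$ to the corresponding tensor at neighbouring points) shows that the dimension of the Lie algebra of Killing generators is locally constant, so each element of $\f g$ is realized by the germ of a local Killing vector field. Letting $\mathcal G$ be the Lie pseudo-group generated by the $1$-parameter groups of these fields, the transitivity of $(\f g,\f k)$ makes $\mathcal G$ act transitively, whence $(M,g)$ is locally homogeneous; its linear isotropy algebra at $p$ is $\widetilde{\f h}$, and since $\f g=\f m\oplus\f k$ is reductive, $(M,g,\mathcal G)$ is reductive locally homogeneous. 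The main obstacle is precisely this integration: unlike the complete Riemannian situation one cannot appeal to global completeness, and must instead guarantee realizability of the formal Killing generators, which is where the constant-dimension hypothesis behind Corollary \ref{corollary realized germs}, supplied by the AS-structure, is indispensable.
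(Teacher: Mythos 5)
Your algebraic construction is sound and is in fact the jet-level mirror of what the paper does elsewhere: the pairs $(X,S_X)$ together with $\{(0,A):\,A\in\f{hol}(\wnabla)\}$ are exactly the $1$-jets of the infinitesimal transvections of $\wnabla$; your closure computations are correct (up to the paper's own sign ambiguities, the residual $\f{so}(T_pM)$-component of $[(X,S_X),(Y,S_Y)]$ is the curvature of $\wnabla$, which lies in $\f{hol}(\wnabla)$ by the Ambrose--Singer holonomy theorem, and $[A,S_X]=S_{AX}$ follows from $\wnabla S=0$); and the splitting $X+A\mapsto (X,S_X+A)$ is precisely the one the paper uses in the proof of Proposition \ref{proposition strongly reduct implica reductiva para Isom}. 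The genuine gap is the integration step, which you flag as delicate and then dispatch by assertion. First, transitivity: Corollary \ref{corollary realized germs} realizes your generators as Killing fields on some neighbourhood $U$ of $p$ only, and every element of the pseudo-group generated by their flows has domain and range inside $U$, so this $\mathcal{G}$ cannot act transitively on $(M,g)$; to repair this you would have to realize the analogous generators at every point of $M$ (the constant-dimension hypothesis does hold globally, since $\wnabla$-parallel transport along arbitrary paths intertwines all the $\nabla^iR$) and then prove the orbits of the resulting pseudo-group are open and closed, or extend Killing fields along paths \`a la Nomizu; none of this is carried out.

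Second, and more seriously, the reductivity claim itself. The definition requires $\f{m}$ to be invariant under $\mathrm{Ad}(H_p)$, where $H_p$ consists of the differentials at $p$ of \emph{all} finite words of flows that happen to fix $p$. What you establish is $\mathrm{ad}(\f{k})$-invariance and invariance under the holonomy group $\widetilde H$. Since $H_p$ need not be connected, the first does not suffice; and the second helps only if $H_p\subseteq\widetilde H$, which is never proved. That inclusion amounts to showing that each realized Killing field is an infinitesimal affine transformation of $\wnabla$ whose flow preserves the holonomy bundle, i.e.\ an infinitesimal transvection --- and this is \emph{not} a formal consequence of the $1$-jet conditions defining $\f{g}$: one needs a propagation argument (for instance, $(\mathcal{L}_\xi S)_p=0$ holds by your choice of jet, and since $\wnabla S=0$ and $\mathcal{L}_\xi\wnabla=\mathcal{L}_\xi S$, the derivative $\wnabla(\mathcal{L}_\xi S)$ is linear in $\mathcal{L}_\xi S$, so $\mathcal{L}_\xi S\equiv 0$ near $p$ by ODE uniqueness along $\wnabla$-geodesics), followed by a tangency argument for the holonomy bundle. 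The paper avoids both difficulties at once by constructing $\mathcal{G}$ directly as the transvection pseudo-group $\{f^{\gamma}\}$ of $\wnabla$: there transitivity is immediate (one map $f^\gamma$ for every path $\gamma$), the linear isotropy group is tautologically $\Lie{Hol}^{\wnabla}$, and the reductive complement is read off the horizontal distribution of the holonomy bundle. In short, your Lie algebra is the right one, but producing a Lie pseudo-group whose isotropy group you can actually control is the real content of the theorem, and your proposal asserts it rather than proves it.
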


\begin{proof}
Let $p,q\in M$, we consider a path $\gamma$ from $p$ to $q$. Since
$\wnabla$ is an AS-connection, the parallel transport
$\gamma:T_pM\to T_qM$ with respect to $\wnabla$ is a linear
isometry preserving the torsion and curvature of $\wnabla$. This
implies that there exist neighborhoods $\mathcal{U}^p$ and
$\mathcal{U}^q$, and an affine transformation
$f^{\gamma}:\mathcal{U}^p\to\mathcal{U}^q$ with respect to
$\wnabla$, such that its differential at $p$ coincides with the
parallel transport along $\gamma$ (see \cite[Vol. I, Ch. VI]{KN}).
Since $\wnabla$ is metric we have that $f^{\gamma}$ is an
isometry. We consider the set
$$\mathcal{G}=\{f^{\gamma}/\,\gamma\text{ is a path from }p\text{ to }q\}.$$
$\mathcal{G}$ is a pseudo-group of local isometries of $(M,g)$
which acts transitively on $(M,g)$, so that $(M,g)$ is locally
homogeneous. In addition, $\mathcal{G}$ coincides with the so
called transvection group of $\wnabla$, which consists of all
local affine maps of $\wnabla$ preserving its holonomy bundle
$\mathcal{P}^{\wnabla}$, that is,
$\widetilde{f}(\mathcal{P}^{\wnabla})\subset
\mathcal{P}^{\wnabla}$. This gives $\mathcal{G}$ a structure of
Lie pseudo-group. We just have to show that $(M,g,\mathcal{G})$ is
reductive. For a fixed point $p\in M$, the isotropy pseudo-group
is
$$\mathcal{H}_p=\{f^{\gamma}/\,f^{\gamma}(p)=p\}$$
which is in one to one correspondence with the set of loops based
at $p$. The linear isotropy group is thus
$$H_p=\{f^{\gamma}_{*}:T_pM\to T_pM/\, f^{\gamma}\in\mathcal{H}_p\}=\Lie{Hol}^{\wnabla}.$$
Therefore, let $(\f{g},\f{k})$ be the transitive Lie algebra
associated with $\mathcal{G}$, we have
$\f{k}\simeq\f{hol}^{\wnabla}$. We fix an orthonormal reference
$u_0$ at $p$ and consider the bundle $Q$ defined in \eqref{bundle
Q}. $Q$ is exactly the holonomy bundle of $\wnabla$ at $u_0$, and
therefore the connection $\wnabla$ reduces to $Q$ and determines a
horizontal distribution $HQ$ which is invariant by the right
action of $H_p$ and by the left action of $\mathcal{G}$ on $Q$. We
again take the linear map
$$\begin{array}{rrcl}
\Psi: & \f{g} & \to & T_{u_0}Q\\
            & \left[\xi\right] & \to & \dt
            \widetilde{\varphi_t}(u_0),
\end{array}$$
As seen before $\Psi$ is a linear isomorphism. We consider the
subspace $\f{m}=\Psi^{-1}(H_{u_0}Q)\subset\f{g}$. Obviously
$\f{g}=\f{m}\oplus\f{k}$, as $\Psi(\f{k})=V_{u_0}Q$. In addition,
let $[\xi]\in\f{m}$ with $1$-parameter group $\varphi_t$, and let
$F=f_*\in H_p$, recall that $\mathrm{Ad}_F([\xi])=[\eta]$ with
$\eta_q=\dt f\circ\varphi_t\circ f^{-1}(q)$ for every $q$ in a
certain neighborhood of $p$. Hence
$$\Psi(\mathrm{Ad}_F([\xi]))=\dt \widetilde{f}\circ\varphi_t\circ\widetilde{f}^{-1}(u_0)=\widetilde{f}_*\left((R_{F^{-1}})_*(\widetilde{\xi}_{u_0})\right).$$
Since $[\xi]\in\f{m}$ we have that $\Psi([\xi])\in H_{u_0}Q$,
whence by the invariance and the equivariance of the horizontal
distribution
$$\widetilde{f}_*\left((R_{F^{-1}})_*(\Psi([\xi]))\right)\in
\widetilde{f}_*\left(H_{R_{F^{-1}}(u_0)}Q\right)=H_{u_0}Q.$$ This
implies that $\f{m}$ is $\mathrm{Ad}(H_p)$-invariant, showing that
$(M,g)$ is reductive.
\end{proof}

\begin{remark}
A globally homogeneous pseudo-Riemannian manifold is in particular
a locally homogeneous pseudo-Riemannian manifold. Therefore the
notion of reductivity that we have defined for locally homogeneous
pseudo-Riemannian manifolds must coincide with the well known
definition of reductive homogeneous space when we consider a Lie
group $G$ as the Lie pseudo-group $\mathcal{G}$. We show below
that this is the case.
\end{remark}

Let $(M,g)$ be a globally homogeneous pseudo-Riemannian manifold
with a Lie group $G$ of (global) isometries acting transitively on
it. Let $H_p$ be the isotropy group at a point $p\in M$. We denote
by $\f{g}$ and $\f{h}$ the Lie algebras of $G$ and $H$
respectively. Recall that $(M,g,G)$ is said reductive if
$\f{g}=\f{m}\oplus\f{h}$ for some $\mathrm{Ad}(H_p)$-invariant
subspace $\f{m}\subset\f{g}$ (see for instance \cite{KN}). We
denote by $(\f{g}',\f{k}')$ the transitive Lie algebra associated
with $G$ seen as a Lie pseudo-group of local isometries, that is,
$\f{g}'$ is the set of germs of local infinitesimal
transformations of $G$. The linear isotropy group as defined in
Definition \ref{definition linear isotropy group}  is just the
image of $H_p$ under the linear isotropy representation $\lambda$
(see \cite[Ch. X]{KN}). We also recall the definition of
fundamental vector fields: let $\alpha\in\f{g}$ we define the
vector field $\alpha^*$ on $M$ as
$$\alpha^*_q=\dt L_{\exp(t\alpha)}(q),\qquad q\in M,$$
where $L_a$ denotes the left action of $a\in G$ on $M$. We
consider the following map
$$\begin{array}{rrcl}
\phi: & \f{g} & \to & \f{g}'\\
      & \alpha & \mapsto & \left[\alpha^*\right].
\end{array}$$
Note that $\phi$ is not a Lie algebra homomorphism since
$[\alpha,\beta]^*=-[\alpha^*,\beta^*]$. Nevertheless we show that
it is a linear isomorphism. Let $\alpha\in\f{g}$ be such that
$[\alpha^*]=0$, this means that $\alpha^*=0$ in a neighborhood of
$p$. In particular $\alpha^*_p=0$ and $A_{\alpha^*}|_p=0$, so that
$\alpha^*=0$. This implies $\alpha=0$, that is, $\phi$ is
injective. On the other hand, let $[\xi]\in \f{g}'$, we consider
the $1$-parameter group of $\xi$, which determines a curve
$\varphi_t\subset G$. Taking $\alpha=\dt \varphi_t$ we have
$\phi(\alpha)=[\xi]$. This proves that $\phi$ is surjective. In
addition, let $h\in H_p$ so that $h_*\in\lambda(H_p)$, the
following diagram is commutative:
\begin{equation*}
 \xymatrix{
\ar@{}[dr]|{\circlearrowleft} \f{g} \ar[d]_-{\mathrm{Ad}_h}
\ar[r]^-{\phi} & \f{g}' \ar[d]^-{\mathrm{Ad}_{h_*}}\\ \f{g}
\ar[r]^-{\phi} & \f{g}' }
\end{equation*}
In fact, let $\alpha\in\f{g}$, then
$\mathrm{Ad}_{h_*}(\alpha^*)=[\eta]$ with
$$\eta_q=\dt L_h\circ
L_{\exp{t\alpha}}\circ
L_{h^{-1}}=(L_{h})_*\left(\alpha^*_{L_{h^{-1}}(q)}\right)=(\mathrm{Ad}_{h}(\alpha))^*_q.$$
We conclude that via $\phi$ one can transform reductive
complements of $(\f{g},\f{h})$ into reductive complements of
$(\f{g}',\f{k}')$ and viceversa. This means that the notions of
reductivity from both the global and the local points of view
coincide.

\section{Invariant geometric structures}

We now consider a locally homogeneous pseudo-Riemannian manifold
$(M,g)$ endowed with a geometric structure given by a tensor field
$P$. Following \cite{Kir} we give the following definition.

\begin{definition}
An Ambrose-Singer-Kiri\v{c}enko connection (or ASK-con\-nection
for sort) on $(M,g,P)$ is a linear connection $\wnabla$ satisfying
$$\wnabla g=0,\qquad \wnabla R=0,\qquad \wnabla S=0,\qquad \wnabla P=0.$$
\end{definition}

Note that an ASK-connection is in particular an AS-connection. We
say that the geometric structure given by $P$ is invariant if the
Lie pseudo-group of isometries $\mathcal{J}$ preserving $P$, that
is
$$\mathcal{J}=\{f\in \mathcal{I},\, f^*P=P\},$$
acts transitively on $M$. The corresponding Lie equation is
$$\mathcal{L}_XP,$$
so that the infinitesimal transformations of $\mathcal{G}$ are
Killing vector fields which are infinitesimal automorphisms of the
geometric structure. A vector field $\xi$ satisfying both
$\mathcal{L}_{\xi}g=0$ and $\mathcal{L}_{\xi}P=0$ will be called a
\textit{geometric Killing vector field}. We consider the Lie
algebra $\f{j}\subset \f{i}$, which consists of germs of geometric
Killing vector fields. The Lie subalgebra $\f{j}^0\subset\f{i}^0$
is defined as the set of elements of $\f{j}$ vanishing at $p$, so
that $(\f{j},\f{j}^0)$ is a transitive Lie algebra. Let
$\f{gkill}$ be the subalgebra of $\f{kill}$ formed by all Killing
generators $(X,A)$ satisfying
$$A\cdot \nabla^jP_p+i_X\nabla^{j+1}P_p=0,\qquad j\geq 0,$$
and let $\f{gkill}^0=\f{kill}^0\cap\f{gkill}$, we have

\begin{proposition}
The transitive Lie algebra $(\f{gkill},\f{gkill}^0)$ is isomorphic
to $(\f{j},\f{j}^0)$.
\end{proposition}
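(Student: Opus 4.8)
The plan is to show that the isomorphism of transitive Lie algebras
$$\Phi\colon(\f{i},\f{i}^0)\longrightarrow(\f{kill},\f{kill}^0),\qquad [\xi]\longmapsto(\xi_p,A_\xi|_p),$$
furnished by Lemma \ref{lemma isomorphism with Killing gen}, restricts to an isomorphism $(\f{j},\f{j}^0)\to(\f{gkill},\f{gkill}^0)$. Since $\Phi$ is already a Lie algebra isomorphism carrying $\f{i}^0$ onto $\f{kill}^0$, and since by definition $\f{j}^0=\f{j}\cap\f{i}^0$ and $\f{gkill}^0=\f{gkill}\cap\f{kill}^0$, it is enough to prove that $\Phi$ maps $\f{j}$ bijectively onto $\f{gkill}$; the claim for the isotropy subalgebras then follows by intersecting with $\f{i}^0$ and $\f{kill}^0$ respectively, and the bracket relations are inherited from $\Phi$ with no further work.

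First I would translate the defining conditions of the two subalgebras into one another. By the same Lie-derivative identity used for the curvature in the preliminaries, for a Killing field $\xi$ and $X=\xi_p$, $A=A_\xi|_p$ one has at $p$
$$(\mathcal{L}_\xi\nabla^jP)_p=A\cdot\nabla^jP_p+i_X\nabla^{j+1}P_p,\qquad j\geq0,$$
so that the equations $(\mathcal{L}_\xi\nabla^jP)_p=0$ coincide term by term with the equations defining $\f{gkill}$ inside $\f{kill}$. The point that makes this translation exact is that $\xi\in\f{i}$ is Killing, hence an infinitesimal affine transformation, so $\mathcal{L}_\xi\nabla=0$ and therefore $\mathcal{L}_\xi$ commutes with the total covariant derivative: $\mathcal{L}_\xi\nabla^jP=\nabla^j\mathcal{L}_\xi P$ for all $j$. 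Consequently $(\mathcal{L}_\xi\nabla^jP)_p=(\nabla^j\mathcal{L}_\xi P)_p$, and the latter quantities are precisely the coefficients cut out by the prolongations of the Lie equation $\mathcal{L}_XP=0$.

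Putting these together, an element $[\xi]\in\f{i}$ lies in $\f{j}$ if and only if $(\nabla^j\mathcal{L}_\xi P)_p=0$ for every $j\geq0$, which by the two observations above is equivalent to $A\cdot\nabla^jP_p+i_X\nabla^{j+1}P_p=0$ for every $j$, i.e. to $\Phi([\xi])\in\f{gkill}$. This gives both $\Phi(\f{j})\subset\f{gkill}$ and, reading the equivalence backwards for $[\xi]=\Phi^{-1}(X,A)$ (which is Killing, being an element of $\f{i}$), the reverse inclusion, so $\Phi$ restricts to the required bijection. I expect the only genuinely delicate step to be the commutation $\mathcal{L}_\xi\nabla^jP=\nabla^j\mathcal{L}_\xi P$ together with the identification of the pointwise conditions $(\nabla^j\mathcal{L}_\xi P)_p=0$ with the formal prolongations defining $\f{j}$; this is exactly the geometric-structure counterpart of the curvature computation already carried out in the preliminaries, and rests essentially on the Killing, hence affine, character of the fields in $\f{i}$. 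Everything else is inherited verbatim from Lemma \ref{lemma isomorphism with Killing gen}.
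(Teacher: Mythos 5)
Your overall strategy---restricting the isomorphism of Lemma \ref{lemma isomorphism with Killing gen} to $\f{j}$, and translating the defining equations of $\f{gkill}$ via the identity $(\mathcal{L}_\xi\nabla^jP)_p=A\cdot\nabla^jP_p+i_X\nabla^{j+1}P_p$ together with the commutation of $\mathcal{L}_\xi$ with $\nabla$ for Killing fields---is exactly the paper's, and your forward inclusion $\Phi(\f{j})\subset\f{gkill}$ is correct; your commutation identity $\mathcal{L}_\xi\nabla^jP=\nabla^j\mathcal{L}_\xi P$ is a clean substitute for the paper's Lemma \ref{lemma L de nabla}.

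The gap is in the converse, i.e.\ surjectivity. You assert that $[\xi]\in\f{i}$ lies in $\f{j}$ if and only if $(\nabla^j\mathcal{L}_\xi P)_p=0$ for every $j\geq 0$, on the grounds that these pointwise conditions are ``the formal prolongations defining $\f{j}$''. But $\f{j}$ is not defined by formal or jet-level conditions: it is the set of germs of actual geometric Killing vector fields, i.e.\ fields satisfying $\mathcal{L}_\xi P=0$ on a whole neighbourhood of $p$. In the smooth (non-analytic) category, the vanishing of all covariant derivatives of the tensor field $\mathcal{L}_\xi P$ at the single point $p$ does not imply $\mathcal{L}_\xi P\equiv 0$ near $p$ (infinite-order zeros exist), and the Killing/affine character of $\xi$---which is where you locate the only delicate point---does nothing to repair this. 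Corollary \ref{corollary realized germs} hands you a genuine Killing field $\xi$ realizing $(X,A)\in\f{gkill}$, but you must still show that \emph{this} field preserves $P$ near $p$. What is needed is the analogue of Corollary \ref{corollary realized germs} for the enlarged system $\{\mathcal{L}_\xi g=0,\ \mathcal{L}_\xi P=0\}$: a Nomizu-type realization argument, using that $\dim\f{gkill}_q$ is constant in $q$ (which follows from the transitivity of $\mathcal{J}$, i.e.\ from the invariance of $P$). This is precisely the content the paper channels through its joint appeal to Lemma \ref{lemma isomorphism with Killing gen} and Corollary \ref{corollary realized germs}; without it, your backward implication is unsupported.
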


\begin{proof}
Let $\xi$ be a geometric Killing vector field, let
$(X,A)=(\xi_p,A_{\xi}|_{p})$. By definition we have
$$A\cdot \nabla^j P=\mathcal{L}_{\xi}(\nabla^jP)_p-\nabla_\xi\nabla^jP_p=\mathcal{L}_{\xi}(\nabla^jP)_p-i_X\nabla^{j+1}P_p,$$
and applying Lemma \ref{lemma L de nabla} below we obtain that
$(\xi_p,A_{\xi}|_{p})\in\f{gkill}$. Making use of Lemma \ref{lemma
isomorphism with Killing gen} and Corollary \ref{corollary
realized germs} we see that the map
$$\begin{array}{rcl}
\f{j} & \to & \f{gkill}\\
\left[ \xi \right] & \mapsto & (\xi_p,A_{\xi}|_p)
\end{array}$$
is a Lie algebra isomorphism taking $\f{j}^0$ to $\f{gkill}^0$.
\end{proof}

\begin{lemma}\label{lemma L de nabla}
Let $\xi$ be a Killing vector field and $\omega$ a tensor field.
If $\mathcal{L}_{\xi}\omega=0$ then
$\mathcal{L}_{\xi}(\nabla\omega)=0$.
\end{lemma}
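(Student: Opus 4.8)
The plan is to use the flow of $\xi$ to convert the infinitesimal hypothesis $\mathcal{L}_{\xi}\omega=0$ into a finite invariance statement, exploit that this flow is by local isometries, and then differentiate back. Let $\varphi_t$ denote the local flow of $\xi$ near $p$; for small $t$ and on a sufficiently small neighborhood this is a well-defined local diffeomorphism, even though $\xi$ need only be a local Killing field. Throughout I would rely on the standard identity $\frac{d}{dt}\varphi_t^*T=\varphi_t^*\mathcal{L}_{\xi}T$, valid for any tensor field $T$, which I would quote rather than reprove.

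First I would observe that the hypothesis $\mathcal{L}_{\xi}\omega=0$ is equivalent to the invariance $\varphi_t^*\omega=\omega$ for all small $t$. Indeed $\frac{d}{dt}\varphi_t^*\omega=\varphi_t^*\mathcal{L}_{\xi}\omega=0$, so $\varphi_t^*\omega$ is constant in $t$ and hence equals its value $\omega$ at $t=0$.

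Next comes the decisive step. Since $\xi$ is Killing we have $\mathcal{L}_{\xi}g=0$, and by the same argument $\varphi_t^*g=g$, so each $\varphi_t$ is a local isometry. Because the Levi-Civita connection is uniquely determined by the metric, every isometry preserves it, which at the level of tensors amounts to the intertwining $\varphi_t^*(\nabla\omega)=\nabla(\varphi_t^*\omega)$. I expect this to be the main point to justify with care: it is standard but rests on the naturality of the Levi-Civita connection under isometries, equivalently on $\varphi_t^*\nabla=\nabla$ as connections.

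Finally I would combine the two facts: $\varphi_t^*(\nabla\omega)=\nabla(\varphi_t^*\omega)=\nabla\omega$, so $\varphi_t^*(\nabla\omega)$ is independent of $t$, and differentiating at the origin gives $\mathcal{L}_{\xi}(\nabla\omega)=\dt\varphi_t^*(\nabla\omega)=0$, as desired. As an alternative, purely infinitesimal route one could introduce the $(1,2)$-tensor $\mathcal{L}_{\xi}\nabla$ measuring the affine defect of $\xi$, establish the commutation formula $\mathcal{L}_{\xi}\nabla\omega=\nabla\mathcal{L}_{\xi}\omega+(\mathcal{L}_{\xi}\nabla)\cdot\omega$, and note that $\mathcal{L}_{\xi}\nabla=0$ for Killing fields; however, the flow argument is shorter and avoids the bookkeeping of the algebraic action of $\mathcal{L}_{\xi}\nabla$ on a general tensor.
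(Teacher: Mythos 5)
Your proof is correct, but it takes a genuinely different route from the paper's. The paper argues infinitesimally throughout: it computes $\mathcal{L}_{\xi}(\nabla\omega)(X,Y)$ directly for a $1$-form $\omega$, uses the hypothesis $\mathcal{L}_{\xi}\omega=0$ to reduce the expression to $\omega\left((\mathcal{L}_{\xi}\nabla)(X,Y)\right)=\omega\left(R_{\xi X}Y+\nabla^2_{XY}\xi\right)$, and then kills this term via the identity $R_{\xi X}Y+\nabla^2_{XY}\xi=0$, which is the affine Jacobi equation applied to a Killing field; in other words, the paper's proof is precisely the ``alternative, purely infinitesimal route'' that you sketch and set aside in your final sentence. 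What your flow argument buys: it treats tensor fields of arbitrary type in one stroke with no index bookkeeping (the paper writes the computation only for $1$-forms and asserts the generalization is straightforward), and it isolates a single conceptual input, namely the naturality of the Levi-Civita connection under isometries, $\varphi_t^*\nabla=\nabla$. What the paper's computation buys: it works entirely at the level of the $1$-jet of $\xi$, making explicit the mechanism $\mathcal{L}_{\xi}\nabla=0$ for Killing fields, and it needs no integration of $\xi$ to a local flow; this is harmless in your argument, since the statement is local and a local Killing field integrates to local isometries near each point, as you correctly note. Both proofs are complete; yours is arguably cleaner, the paper's more self-contained and closer in spirit to the jet-level computations used elsewhere in the paper.
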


\begin{proof}
For the sake of simplicity we show the proof for $\omega$ a
$1$-form. The generalization for tensor fields of arbitrary type
is straightforward. By direct calculation
$$\mathcal{L}_{\xi}(\nabla\omega)(X,Y)=-\xi\cdot (\omega(\nabla_XY))+\omega\left(\nabla_{\mathcal{L}_{\xi}X}Y\right)+\omega\left(\nabla_X\mathcal{L}_{\xi}Y\right).$$
Making use of $\mathcal{L}_{\xi}\omega=0$ we obtain
$$\mathcal{L}_{\xi}(\nabla\omega)(X,Y)=\omega\left((\mathcal{L}_{\xi}\nabla)(X,Y)\right)=\omega\left(R_{\xi X}Y+\nabla^2_{XY}\xi\right).$$
But $R_{\xi}+\nabla^2\xi=0$ since it is just the affine Jacobi
equation applied to a Killing vector field $\xi$.
\end{proof}

\bigskip

We now consider a Lie pseudo-group $\mathcal{G}\subset\mathcal{J}$
acting transitively on $M$. We associate to $\mathcal{G}$ the Lie
algebra $\f{g}\subset\f{j}$ consisting on germs of local geometric
Killing vector fields with $1$-parameter group contained in
$\mathcal{G}$. The Lie algebra $\f{k}$ formed by those
$[\xi]\in\f{g}$ vanishing at $p$ is thus a Lie subalgebra of
$\f{j}^0$, and the pair $(\f{g},\f{k})$ is a transitive Lie
algebra. We take the isotropy pseudo-group $\mathcal{H}_p$ and the
linear isotropy group $H_p$ associated with $\mathcal{G}$. As
before we have that $H_p$ is a Lie subgroup of the stabilizer of
$P_p$ in $\Lie{O}(T_pM)$, and that $\f{k}\simeq \f{h}_p$. Recall
also that we have the action $\mathrm{Ad}$ of $H_p$ on $\f{g}$.

\begin{definition}
Let $(M,g,P)$ be a pseudo-Riemannian manifold endowed with a
geometric structure defined by a tensor field $P$. Let
$\mathcal{G}$ be a Lie pseudo-group of isometries acting
transitively on $(M,g,P)$ and preserving $P$. We will say that
$(M,g,P,\mathcal{G})$ is reductive if the transitive Lie algebra
$(\f{g},\f{k})$ associated with $\mathcal{G}$ can be decomposed as
$\f{g}=\f{m}\oplus \f{k}$, where $\f{m}$ is
$\mathrm{Ad}(H_p)$-invariant.
\end{definition}

\begin{theorem}
Let $(M,g,P,\mathcal{G})$ be a reductive locally homogeneous
pseudo-Rie\-mannian manifold with $P$ invariant. Then $(M,g,P)$
admits an ASK-connection.
\end{theorem}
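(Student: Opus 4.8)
The plan is to mimic the construction in the proof of Theorem \ref{theorem 1} almost verbatim, exploiting the fact that the only new input is that the transitive pseudo-group now satisfies $\mathcal{G}\subset\mathcal{J}$, so that its elements preserve $P$ in addition to $g$. First I would fix a point $p\in M$ and an orthonormal reference $u_0$ in the fiber over $p$, and form the reduction $Q=\{u\in\mathcal{O}(M)/\,u=\widetilde{h}(u_0),\,h\in\mathcal{G}\}$ exactly as in \eqref{bundle Q}. As recalled just before the definition of reductivity in this setting, the linear isotropy group $H_p$ now lies inside the stabilizer of $P_p$ in $\Lie{O}(T_pM)$, so the structure group $\bar{H}$ of $Q$ consists of isometries fixing $P_p$. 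Using the reductive decomposition $\f{g}=\f{m}\oplus\f{k}$ supplied by the hypothesis and the isomorphism $\Psi:\f{g}\to T_{u_0}Q$, I would set $H_{u_0}Q=\Psi(\f{m})$ and propagate it by $H_uQ=\widetilde{h}_*(H_{u_0}Q)$ for $u=\widetilde{h}(u_0)$. The verification that this horizontal distribution is smooth, invariant under $\mathcal{G}$, and equivariant under the right action of $\bar{H}$ — and hence defines a linear connection $\wnabla$ — is word for word the argument of Theorem \ref{theorem 1}, since it uses only that $\mathcal{G}$ acts transitively by isometries and that $\f{m}$ is $\mathrm{Ad}(H_p)$-invariant. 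In the same manner one obtains $\wnabla g=0$ (because $Q\subset\mathcal{O}(M)$) together with $\wnabla R=0$ and $\wnabla S=0$ (from the invariance of the torsion and curvature of $\wnabla$ under parallel transport).

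The genuinely new point is the equation $\wnabla P=0$, which I would extract from the characterization of $\wnabla$ by its parallel transport. As established in the proof of Theorem \ref{theorem 1}, for a path $\gamma$ from $p$ to $q$ the $\wnabla$-parallel transport $T_pM\to T_qM$ along $\gamma$ coincides with $h_*$ for a suitable $h\in\mathcal{G}$ with $h(p)=q$, namely the $h$ for which the horizontal lift $\bar\gamma$ satisfies $\bar\gamma(1)=\widetilde{h}(u_0)$. Since now $\mathcal{G}\subset\mathcal{J}$, every such $h$ satisfies $h^*P=P$, and at the level of the induced action on tensors this says precisely that $h_*$ carries $P_p$ to $P_q$. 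Hence $\wnabla$-parallel transport sends $P_p$ to $P_q$ along every path; as $p$, $q$ and $\gamma$ were arbitrary, $P$ is invariant under parallel transport, which is exactly the condition $\wnabla P=0$. Combined with the previous paragraph this shows that $\wnabla$ is an ASK-connection.

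I do not expect a conceptual obstacle here: the whole argument reduces to Theorem \ref{theorem 1} plus the single observation above, and the only place where $P$ enters nontrivially is the final parallel-transport step. The point that requires care is the bookkeeping of that step, namely confirming that the identity ``parallel transport $=h_*$'', proved in Theorem \ref{theorem 1} for the action on tangent vectors, transports correctly to the tensor type of $P$ — that is, that $h^*P=P$ really forces the extended isomorphism $h_*$ to send $P_p$ to $P_q$, and that invariance of $P$ under all parallel transports is equivalent to $\wnabla P=0$ for a tensor field of that type. Both facts are standard, but they are exactly the places where the geometric structure, as opposed to $g$, $R$ and $S$ alone, actually intervenes.
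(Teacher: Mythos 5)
Your proposal is correct and takes essentially the same approach as the paper: the paper's proof simply cites Theorem \ref{theorem 1} to obtain the AS-connection $\wnabla$ (rather than re-running its construction as you do), and then, exactly as in your final step, concludes $\wnabla P=0$ from the characterization of $\wnabla$-parallel transport as $h_*$ for some $h\in\mathcal{G}$, which preserves $P$.
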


\begin{proof}
Let $(M,g,P,\mathcal{G})$ be a reductive locally homogeneous
pseudo-Riemannian manifold with $P$ invariant, by Theorem
\ref{theorem 1} $(M,g)$ admits an AS-connection $\wnabla$. We just
have to show that $\wnabla P=0$. However, recall that $\wnabla$ is
characterized as the linear connection whose parallel transport
coincides with the differential $h_*$ of some $h\in\mathcal{G}$.
Since $\mathcal{G}$ preserves $P$, we have that $P$ is invariant
by parallel transport with respect to $\wnabla$, whence $\wnabla
P=0$.
\end{proof}

\begin{theorem}
Let $(M,g,P)$ be a pseudo-Riemannian manifold admitting an
ASK-connection $\wnabla$. Then there is a Lie pseudo-group of
isometries $\mathcal{G}$ acting transitively on $(M,g,P)$ and
preserving $P$, such that $(M,g,P,\mathcal{G})$ is reductive
locally homogeneous with $P$ invariant.
\end{theorem}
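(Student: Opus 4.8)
The plan is to reduce this statement to Theorem \ref{theorem 2} and then upgrade the resulting pseudo-group to one that preserves $P$. Since an ASK-connection is in particular an AS-connection, Theorem \ref{theorem 2} applies verbatim to $\wnabla$ and produces a Lie pseudo-group $\mathcal{G}$ of isometries, namely the transvection group of $\wnabla$, such that $(M,g,\mathcal{G})$ is reductive locally homogeneous. I would take this same $\mathcal{G}$ as the candidate pseudo-group, together with the reductive decomposition $\f{g}=\f{m}\oplus\f{k}$ already exhibited in that proof, where $\f{m}=\Psi^{-1}(H_{u_0}Q)$ is $\mathrm{Ad}(H_p)$-invariant. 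Thus the only genuinely new content is to show that $\mathcal{G}$ respects the geometric structure $P$.

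The key observation is that the equation $\wnabla P=0$ forces every element of $\mathcal{G}$ to preserve $P$. Recall from the proof of Theorem \ref{theorem 2} that the elements of $\mathcal{G}$ are the local affine transformations $f^{\gamma}$ of $\wnabla$ whose differential at $p$ equals the $\wnabla$-parallel transport along a path $\gamma$. Now $\wnabla P=0$ says precisely that $P$ is invariant under $\wnabla$-parallel transport, so that $((f^{\gamma})^{*}P)_p=P_p$. Since $f^{\gamma}$ is affine with respect to $\wnabla$ and $P$ is $\wnabla$-parallel, the pulled-back tensor field $(f^{\gamma})^{*}P$ is again $\wnabla$-parallel; two parallel tensor fields agreeing at a single point of a connected domain coincide, whence $(f^{\gamma})^{*}P=P$. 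Therefore $f^{\gamma}\in\mathcal{J}$ and $\mathcal{G}\subset\mathcal{J}$.

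With this in hand the remaining assertions follow. As $\mathcal{G}$ already acts transitively on $M$ and is contained in $\mathcal{J}$, the geometric structure $P$ is invariant in the sense of the earlier definition. Moreover, each germ $[\xi]\in\f{g}$ has $1$-parameter group $\varphi_t\subset\mathcal{G}\subset\mathcal{J}$, so $\varphi_t$ preserves $P$ and hence $\mathcal{L}_{\xi}P=0$; thus every such $\xi$ is a geometric Killing vector field and $\f{g}\subset\f{j}$. Consequently the Lie algebra $\f{g}$, its isotropy $\f{k}$, and the linear isotropy group $H_p$ attached to $\mathcal{G}$ coincide with those appearing in the reductivity definition for $(M,g,P,\mathcal{G})$. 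The decomposition $\f{g}=\f{m}\oplus\f{k}$ produced by Theorem \ref{theorem 2}, with $\f{m}$ being $\mathrm{Ad}(H_p)$-invariant, is then exactly a reductive decomposition for the quadruple $(M,g,P,\mathcal{G})$.

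I expect the point to verify with care to be the invariance $(f^{\gamma})^{*}P=P$: one must confirm that the pullback of a $\wnabla$-parallel tensor by a $\wnabla$-affine map is again parallel, and couple this with the matching of values at $p$ supplied by $\wnabla P=0$. Everything else is a bookkeeping identification of the Lie algebra $\f{g}$ furnished by Theorem \ref{theorem 2} with the algebra of germs of geometric Killing fields, after which reductivity and transitivity are inherited with no further work.
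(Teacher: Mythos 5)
Your proposal is correct and follows essentially the same route as the paper: it takes the transvection pseudo-group $\mathcal{G}=\{f^{\gamma}\}$ furnished by Theorem \ref{theorem 2}, observes that $\wnabla P=0$ together with the $f^{\gamma}$ being $\wnabla$-affine forces $(f^{\gamma})^*P=P$, and then inherits the reductive decomposition unchanged. The only difference is that you spell out the parallel-transport argument for $P$-invariance (pullback of a parallel tensor by an affine map is parallel, values agree at $p$), which the paper states in one line.
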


\begin{proof}
As in the proof of Theorem \ref{theorem 2} we consider the Lie
pseudo-group
$$\mathcal{G}=\{f^{\gamma}/\,\gamma\text{ is a path from }p\text{ to }q\}.$$
Since the local maps $f^{\gamma}$ are affine maps of $\wnabla$,
and $\wnabla P=0$, we have that $P$ is invariant by $\mathcal{G}$.
The same exact arguments used in the proof of Theorem \ref{theorem
2} show that $(M,g,P)$ is reductive locally homogeneous with $P$
invariant.
\end{proof}

\section{Strongly reductive locally homoge\-neous pseudo-Riemannian
manifolds}\label{section strongly reductive}

The results presented in this section apply to pseudo-Riemannian
metrics of any signature (including the Riemannian case) with or
without an extra geometric structure. In addition, these results
are new for pseudo-Riemannian metrics with or without an extra
geometric structure excluding the case of definite metrics, and in
the Riemannian case the results are new in the presence of a
geometric structure. For the already known case of Riemannian
metrics without extra geometry see \cite{Nic-Tri}. For the sake of
brevity we present here the most general case.

\bigskip

Let $(M,g)$ be a pseudo-Riemannian manifold endowed with a
geometric structure defined by a tensor field $P$. Let $p\in M$,
for every integers $r,s\geq 0$ we consider the Lie algebras
$\f{g}(p,r)$ and $\f{p}(p,s)$ given by
$$\f{g}(p,r)=\left\{A\in\f{so}(T_pM),\, A\cdot\left( \nabla^i R_p\right)=0,\hspace{2mm}i=0,\ldots,r\right\},$$
$$\f{p}(p,s)=\left\{A\in\f{so}(T_pM),\, A\cdot\left( \nabla^j P_p\right)=0,\hspace{2mm}j=0,\ldots,s\right\},$$
where $A$ acts as a derivation on the tensor algebra of $T_pM$. We
thus have filtrations
$$\f{so}(T_pM)\supset\f{g}(p,0)\supset\ldots\supset\f{g}(p,r)\supset\ldots$$
$$\f{so}(T_pM)\supset\f{p}(p,0)\supset\ldots\supset\f{p}(p,s)\supset\ldots$$
Let $k(p)$ and $l(p)$ be the first integers such that
$\f{g}(p,k(p))=\f{g}(p,k(p)+1)$ and
$\f{p}(p,l(p))=\f{p}(p,l(p)+1)$, and let
$\f{h}(p,r,s)=\f{g}(p,r)\cap\f{p}(p,s)$. We consider the complex
of filtrations
\bigskip

\noindent \resizebox{\linewidth}{!}{$\begin{matrix} \f{so}(T_pM) &
\supset & \f{g}(p,0) & \supset & \ldots & \supset &
\f{g}(p,k(p)) & = & \f{g}(p,k(p)+1)\\
\cup &  & \cup & & & & \cup & & \cup\\
\f{p}(p,0) & \supset & \f{h}(p,0,0) & \supset & \ldots & \supset &
\f{h}(p,k(p),0) & = & \f{h}(p,k(p)+1,0)\\
\cup &  & \cup & & & & \cup & & \cup\\
\vdots &  & \vdots & & & & \vdots & & \vdots\\
\cup &  & \cup & & & & \cup & & \cup\\
\f{p}(p,l(p)) & \supset & \f{h}(p,0,l(p)) & \supset & \ldots &
\supset & \f{h}(p,k(p),l(p)) & = & \f{h}(p,k(p)+1,l(p))\\
|| &  & || & & & & || & & ||\\
\f{p}(p,l(p)+1) & \supset & \f{h}(p,0,l(p)+1) & \supset & \ldots &
\supset & \f{h}(p,k(p),l(p)+1) & = & \f{h}(p,k(p)+1,l(p)+1).
\end{matrix}$}

\bigskip

\noindent To complete the notation we will denote
$\f{g}(p,-1)=\f{so}(T_pM)$, $\f{p}(p,-1)=\f{so}(T_pM)$, so that
$\f{h}(p,-1,s)=\f{p}(p,s)$ and $\f{h}(p,r,-1)=\f{g}(p,r)$.

We shall call a pair of integers $(r(p),s(p))$ in the set
$\mathbb{N}\cup\{0,-1\}$ a \textit{stabilizing pair} at $p\in M$
if $r(p)\leq k(p)$, $s(p)\leq l(p)$ and
$$\begin{matrix}
\f{h}(p,r(p),s(p)) & = & \f{h}(p,r(p)+1,s(p))\\
|| & & ||\\
\f{h}(p,r(p),s(p)+1) & = & \f{h}(p,r(p)+1,s(p)+1).
\end{matrix}$$
Note that $(k(p),l(p))$ is a stabilizing pair.

\begin{remark}\label{remark stabilizing pair}
An example of a manifold with an stabilizing pair distinct form
$(k(p),l(p))$ is exhibited in Section \ref{section reductivity
condition}.
\end{remark}

The following definition generalizes the definition of
infinitesimal homogeneous space given by Singer (see
\cite{Nic-Tri}). Consider a pair of integers
$(r,s)\in(\mathbb{N}\cup\{0,-1\})^2$. We say that $(M,g,P)$ is
\textit{$(r,s)$-infinitesimally $P$-homogeneous} if for every
$p,q\in M$ there is a linear isometry $F:T_pM\to T_q M$ such that
$$F^*(\nabla^iR_q)=\nabla^iR_p,\qquad i=0,\ldots,r+1,$$
$$F^*(\nabla^jP_q)=\nabla^jP_p,\qquad j=0,\ldots,s+1.$$
Let $p\in M$ be a fixed point and suppose that $(r(p),s(p))$ is a
stabilizing pair at $p$. If $(M,g,P)$ is
$(r(p),s(p))$-infinitesimally $P$-homogeneous, then $(r(p),s(p))$
is a stabilizing pair at all $q\in M$ (so that we can omit the
point $p$). In fact, any linear isometry $F:T_pM\to T_qM$ with
$F^*(\nabla^iR_q)=\nabla^iR_p$ and $F^*(\nabla^jP_q)=\nabla^jP_p$
for $i=0,\ldots,r(p)+1$ and $j=0,\ldots,s(p)+1$, induces
isomorphisms between $\f{h}(p,i,j)$ and $\f{h}(q,i,j)$ for $i\leq
r(p)$ and $j\leq s(p)$. Note that this means that if $(M,g,P)$ is
$(k(p),l(p))$-infinitesimally $P$-homogeneous then the numbers
$k(q)$ and $l(q)$ are independent of $q\in M$. Let $H(p,r,s)$ be
the stabilizing group of the tensors $\nabla^iR_p$, and
$\nabla^jP_p$, $0\leq i\leq r+1$, $0\leq j\leq s+1$, inside
$O(T_pM)$. It is evident that $\f{h}(p,r,s)$ is the Lie algebra of
$H(p,r,s)$.

Obviously, a locally homogeneous pseudo-Riemannian manifold with
$P$ invariant is in particular $(r,s)$-infinitesimally
$P$-homogeneous for every pair $(r,s)$. We shall see that the
converse is also true.

\begin{definition}
Let $(r,s)$ be a stabilizing pair at $p\in M$. $(M,g,P)$ is said
$(r,s)$-strongly reductive at $p$ if there is an
$\mathrm{Ad}(H(p,r,s))$-invariant subspace
$\f{n}(p,r,s)\subset\f{so}(T_pM)$ such that
$$\f{so}(T_pM)=\f{h}(p,r,s)\oplus\f{n}(p,r,s).$$
\end{definition}

\begin{lemma}\label{lemma strongly reductive independent of F}
Let $(M,g,P)$ be $(r,s)$-infinitesimally $P$-homogeneous. If
$(M,g,P)$ is $(r,s)$-strongly reductive at $p\in M$, then it is
$(r,s)$-strongly reductive at every point of $M$.
\end{lemma}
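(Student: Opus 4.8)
The plan is to transport the reductive decomposition at $p$ to an arbitrary point $q\in M$ by means of the linear isometry supplied by $(r,s)$-infinitesimal $P$-homogeneity. Fix $q\in M$ and let $F:T_pM\to T_qM$ be a linear isometry with $F^*(\nabla^iR_q)=\nabla^iR_p$ and $F^*(\nabla^jP_q)=\nabla^jP_p$ for $i=0,\ldots,r+1$ and $j=0,\ldots,s+1$, which exists by hypothesis. The central object is the conjugation map $\mathrm{Ad}_F:\f{so}(T_pM)\to\f{so}(T_qM)$, $A\mapsto F\circ A\circ F^{-1}$; since $F$ is a linear isometry this is a well-defined linear isomorphism of the orthogonal Lie algebras. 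I would then define $\f{n}(q,r,s):=\mathrm{Ad}_F(\f{n}(p,r,s))$ and verify that it does the job.

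First I would record the intertwining properties of $F$. For $A$ acting as a derivation one has $F^*\big((\mathrm{Ad}_F A)\cdot T\big)=A\cdot(F^*T)$ for every tensor $T$ on $T_qM$, so $A\cdot\nabla^iR_p=0$ is equivalent to $(\mathrm{Ad}_F A)\cdot\nabla^iR_q=0$, and similarly for the $P$-tensors; this is exactly the observation already made above that $\mathrm{Ad}_F$ carries $\f{h}(p,r,s)$ isomorphically onto $\f{h}(q,r,s)$. By the same pullback computation, conjugation $B\mapsto F\circ B\circ F^{-1}$ maps the stabilizer $H(p,r,s)$ isomorphically onto $H(q,r,s)$: if $C\in H(q,r,s)$ then $D:=F^{-1}\circ C\circ F$ satisfies $D^*(\nabla^iR_p)=F^*C^*(F^{-1})^*(\nabla^iR_p)=F^*(\nabla^iR_q)=\nabla^iR_p$, and likewise for $P$, so $D\in H(p,r,s)$; the converse holds by symmetry.

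With these facts in hand the verification is immediate. Since $\mathrm{Ad}_F$ is a linear isomorphism sending $\f{h}(p,r,s)$ to $\f{h}(q,r,s)$ and $\f{n}(p,r,s)$ to $\f{n}(q,r,s)$, applying it to $\f{so}(T_pM)=\f{h}(p,r,s)\oplus\f{n}(p,r,s)$ yields $\f{so}(T_qM)=\f{h}(q,r,s)\oplus\f{n}(q,r,s)$. For the invariance, take $C\in H(q,r,s)$ and $A'=\mathrm{Ad}_F(A)$ with $A\in\f{n}(p,r,s)$; the conjugation identity
$$\mathrm{Ad}_C(A')=C\,F\,A\,F^{-1}\,C^{-1}=F\,(F^{-1}CF)\,A\,(F^{-1}CF)^{-1}\,F^{-1}=\mathrm{Ad}_F\big(\mathrm{Ad}_{F^{-1}CF}(A)\big),$$
together with $F^{-1}CF\in H(p,r,s)$ and the $\mathrm{Ad}(H(p,r,s))$-invariance of $\f{n}(p,r,s)$, shows $\mathrm{Ad}_C(A')\in\mathrm{Ad}_F(\f{n}(p,r,s))=\f{n}(q,r,s)$. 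Hence $\f{n}(q,r,s)$ is $\mathrm{Ad}(H(q,r,s))$-invariant, and $(M,g,P)$ is $(r,s)$-strongly reductive at $q$.

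The argument is essentially bookkeeping, and I do not expect a genuine obstacle. The only point to watch is that $F$ is not canonical, so $\f{n}(q,r,s)$ a priori depends on the chosen isometry; but the statement asserts only the \emph{existence} of a reductive complement at $q$, so this dependence is harmless. The one computation deserving care is the conjugation identity in the invariance step, where the stabilizer element $C\in H(q,r,s)$ must first be pulled back through $F$ to an element of $H(p,r,s)$ before invoking invariance at $p$.
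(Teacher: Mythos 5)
Your proof is correct and follows essentially the same route as the paper: transport $\f{n}(p,r,s)$ by the conjugation $A\mapsto F\circ A\circ F^{-1}$, observe that this conjugation carries $\f{h}(p,r,s)$ onto $\f{h}(q,r,s)$ and $H(p,r,s)$ onto $H(q,r,s)$, and verify $\mathrm{Ad}(H(q,r,s))$-invariance by pulling stabilizer elements at $q$ back to $H(p,r,s)$. The only difference is that the paper additionally proves $\f{n}(q,r,s)$ is independent of the chosen isometry $F$ --- which, as you correctly note, is not needed for the statement itself, though the paper exploits this well-definedness later (in Lemma \ref{lemma subbundles pseudo}) to assemble the $\f{n}(q,r,s)$ into a vector bundle $\f{n}(M,r,s)$.
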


\begin{proof}
Let $q\in M$ be another point distinct from $p$, recall that
$(r,s)$ is also a stabilizing pair at $q$. Let $F:T_pM\to T_qM$ be
a linear isometry such that $F^*(\nabla^i R_q)=\nabla^iR_p$ and
$F^*(\nabla^jP_q)=\nabla^jP_p$ for $i=0,\ldots,r+1$ and
$j=0,\ldots,s+1$. $F$ induces a linear isomorphism
$\widetilde{F}:\f{so}(T_pM)\to\f{so}(T_qM)$ given by $A\mapsto
F\circ A\circ F^{-1}$. By construction it is obvious that
$\widetilde{F}(\f{h}(p,r,s))=\f{h}(q,r,s)$. Let $\f{n}(p,r,s)$ be
an $\mathrm{Ad}(H(p,r,s))$-invariant complement to $\f{h}(p,r,s)$
inside $\f{so}(T_pM)$, we define
$$\f{n}(q,r,s)=\widetilde{F}(\f{n}(p,r,s))\subset\f{so}(T_qM).$$
This subspace is independent of the isometry $F$. Indeed, let
$G:T_pM\to T_qM$ be another linear isometry with $G^*(\nabla^i
R_q)=\nabla^iR_p$ and $G^*(\nabla^jP_q)=\nabla^jP_p$ for
$i=0,\ldots,r+1$ and $j=0,\ldots,s+1$. The composition
$G^{-1}\circ F$ is an element of $O(T_pM)$. Moreover, $G^{-1}\circ
F$ stabilizes $R_p,\ldots,\nabla^{r+1}R_p$ and
$P_p,\ldots,\nabla^{s+1} P_p$, so that it is an element of
$H(p,r,s)$. Hence, for any $A\in \f{n}(p,r,s)$ we have
$$\widetilde{G}^{-1}\circ\widetilde{F}(A)=\mathrm{Ad}_{G^{-1}\circ
F}(A)\in\f{n}(p,r,s),$$ showing that $\widetilde{F}(\f{n}(p,r,s))$
does not depend on the linear isometry $F$. We finally show that
$\f{n}(q,r,s)$ is $\mathrm{Ad}(H(q,r,s))$-invariant. Let
$B\in\f{n}(q,r,s)$, there exists an element $A\in\f{n}(p,r,s)$
with $B=\widetilde{F}(A)$. Let $b\in H(q,r,s)$, we take
$a=F^{-1}\circ b\circ F\in H(p,r,s)$. Then
$$\mathrm{Ad}_{b}(B)=b\circ B\circ b^{-1}=F\circ a\circ A\circ a^{-1}\circ F^{-1}=\widetilde{F}(\mathrm{Ad}_a(A)),$$
which belongs to $\f{n}(q,r,s)$ since
$\mathrm{Ad}_a(A)\in\f{n}(p,r,s)$.
\end{proof}

\bigskip

By virtue of the previous Lemma, we say that an
$(r,s)$-infinitesimally $P$-homogeneous manifold $(M,g,P)$ is
\textit{$(r,s)$-strongly reductive} if it is $(r,s)$-strongly
reductive at some point of $M$. The same applies for locally
homogeneous spaces with $P$ invariant. The term ``strongly
reductive'' is motivated by Proposition \ref{proposition strongly
reduct implica reductiva para Isom} and Example \ref{example B3},
which show that strong reductivity implies reductivity, but the
converse is not true.

\begin{remark}
In the case $g$ is Riemannian, the Killing form of $\f{so}(T_pM)$
is definite, so that the strong reductivity condition is
automatically satisfied choosing for $\f{n}(p,r,s)$ the orthogonal
complement of $\f{h}(p,r,s)$ inside $\f{so}(T_pM)$ with respect to
the Killing form. When the presence of an extra geometric
structure is not taken into account, the integer $k(p)$
stabilizing the filtration
$$\f{so}(T_pM)\supset\f{g}(p,0)\supset\ldots\supset\f{g}(p,r)\supset\ldots$$
is known as the \textit{Singer invariant} of $(M,g)$. In this
case, the choice of $\f{g}(p,k(p))^{\bot}$ as complement of
$\f{g}(p,k(p))$ leads to the \textit{canonical AS-connec\-tion}
constructed by Kowalski in \cite{Ko2} in a similar way to the
proof of Theorem \ref{theorem 1 strongly reductive} below.
\end{remark}

Let $\pi:\mathcal{O}(M)\to M$ be the bundle of orthonormal
references with structure group $O(\nu,n-\nu)$, where $\nu$ is the
index of the metric. Let $u_0\in \mathcal{O}(M)$ with
$\pi(u_0)=p$, and $P_0=u_0^*(P_p)$. Let $\mathbf{P}$ be the space
of tensors to which $P_0$ belongs. For any pair of integers
$(r,s)\in (\mathbb{N}\cup\{0,-1\})^2$ we consider the following
$O(\nu,n-\nu)$-equivariant map:
$$\begin{array}{rrcl} \Phi_{(r,s)}: & \mathcal{O}(M) & \to &
\bigoplus_{i=0}^{k+1}\left(\bigotimes^{r+4}(\R^n)^*\right)\oplus\bigoplus_{j=0}^{s+1}\left(\bigotimes^{j}(\R^n)^*\otimes\mathbf{P}\right)\\
  & u & \mapsto &
  u^*(R_{\pi(u)},\ldots,\nabla^{r+1}R_{\pi(u)},P_{\pi(u)},\ldots,\nabla^{s+1} P_{\pi(u)}).
\end{array}$$

\begin{lemma}\label{lemma orbit pseudo}
If $(M,g,P)$ is $(r,s)$-infinitesimally $P$-homogeneous, then the
image of $\mathcal{O}(M)$ under $\Phi_{(r,s)}$ is a single
$O(\nu,n-\nu)$-orbit.
\end{lemma}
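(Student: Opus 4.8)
The plan is to exploit the $O(\nu,n-\nu)$-equivariance of $\Phi_{(r,s)}$ together with the hypothesis of $(r,s)$-infinitesimal $P$-homogeneity. First I would record the equivariance concretely: interpreting a reference $u$ as a linear isometry $u\colon(\R^n,\langle\,,\,\rangle)\to (T_{\pi(u)}M,g_{\pi(u)})$, the right action of $B\in O(\nu,n-\nu)$ is $R_B(u)=u\circ B$, and since $\Phi_{(r,s)}(u)$ is obtained by pulling back the tensors $\nabla^iR_{\pi(u)}$ and $\nabla^jP_{\pi(u)}$ through $u$, the anti-multiplicativity of pullback yields $\Phi_{(r,s)}(u\circ B)=B^*\cdot\Phi_{(r,s)}(u)$. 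In particular the image $\Phi_{(r,s)}(\mathcal{O}(M))$ is stable under the $O(\nu,n-\nu)$-action, hence is a union of orbits; it therefore suffices to prove that any two points of the image lie in a common orbit, for then the image is contained in one orbit, and, being invariant, must equal that orbit.

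To that end I would fix two references $u,v\in\mathcal{O}(M)$, set $q=\pi(u)$ and $q'=\pi(v)$, and invoke the hypothesis: there is a linear isometry $F\colon T_qM\to T_{q'}M$ with $F^*(\nabla^iR_{q'})=\nabla^iR_q$ for $i=0,\ldots,r+1$ and $F^*(\nabla^jP_{q'})=\nabla^jP_q$ for $j=0,\ldots,s+1$. These ranges are exactly the ones appearing in the tuple defining $\Phi_{(r,s)}$, which is precisely why $(r,s)$-infinitesimal $P$-homogeneity is formulated up to orders $r+1$ and $s+1$ rather than $r$ and $s$; matching these indices is the one point that demands care, and I have checked that they coincide.

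The key step is to package $u$, $v$ and $F$ into a single structure-group element. I would set $B=v^{-1}\circ F\circ u$; as a composite of linear isometries $\R^n\to T_qM\to T_{q'}M\to\R^n$ it preserves $\langle\,,\,\rangle$, so $B\in O(\nu,n-\nu)$, and it satisfies $F\circ u=v\circ B$. Using this together with $F^*(\nabla^iR_{q'})=\nabla^iR_q$ I would then compute, component by component,
$$u^*(\nabla^iR_q)=u^*F^*(\nabla^iR_{q'})=(F\circ u)^*(\nabla^iR_{q'})=(v\circ B)^*(\nabla^iR_{q'})=B^*\bigl(v^*(\nabla^iR_{q'})\bigr),$$
and the entirely analogous identity for each $P$-component. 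Assembling these coordinatewise gives $\Phi_{(r,s)}(u)=B^*\cdot\Phi_{(r,s)}(v)$, so the two images lie in the same $O(\nu,n-\nu)$-orbit.

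Since $u$ and $v$ were arbitrary and the image is a union of orbits, this shows $\Phi_{(r,s)}(\mathcal{O}(M))$ is a single orbit. I do not anticipate any serious obstacle: the whole content is the construction of the element $B$ and the bookkeeping of pullback conventions. In particular, whether one regards the group action on the target as $B\mapsto B^*$ or as the genuine left action $B\mapsto(B^{-1})^*$ is immaterial, since $B\mapsto B^{-1}$ is a bijection of $O(\nu,n-\nu)$ and the orbit is the same set either way.
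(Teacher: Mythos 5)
Your proof is correct and follows essentially the same route as the paper's: both rest on the $O(\nu,n-\nu)$-equivariance of $\Phi_{(r,s)}$ together with the linear isometry $F$ supplied by $(r,s)$-infinitesimal $P$-homogeneity (whose index ranges $i\le r+1$, $j\le s+1$ indeed match the components of $\Phi_{(r,s)}$, as you checked). The only difference is organizational: the paper splits into the cases $\pi(u_0)=\pi(u)$ and $\pi(u_0)\neq\pi(u)$, using the induced map $\widetilde{F}$ with $\Phi\circ\widetilde{F}=\Phi$ and then fiberwise equivariance, whereas you merge both steps into the single group element $B=v^{-1}\circ F\circ u$ — a slightly cleaner packaging of the same argument.
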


\begin{proof}
Let $u\in \mathcal{O}(M)$ and denote $\Phi=\Phi_{(r,s)}$. If
$\pi(u_0)=\pi(u)$ then $u_0$ and $u$ are in the same
$O(\nu,n-\nu)$-orbit, and since $\Phi$ is
$O(\nu,n-\nu)$-equivariant, we have that $\Phi(u_0)$ and $\Phi(u)$
are in the same $O(\nu,n-\nu)$-orbit. If $\pi(u_0)\neq\pi(u)$, let
$q=\pi(u)$, then there is a linear isometry $F:T_pM\to T_qM$ such
that $F^*(\nabla^i R_q)=\nabla^i R_p$ for $i=0,\ldots,r+1$, and
$F^*(\nabla^jP_q)=\nabla^jP_p$ for $j=0,\ldots,s+1$. $F$ induces a
map $\widetilde{F}:\mathcal{O}(M)\to \mathcal{O}(M)$ such that
$\Phi\circ \widetilde{F}=\Phi$. Since
$\pi(u)=\pi(\widetilde{F}(u_0))$, we conclude that $\Phi(u_0)$ and
$\Phi(u)$ are in the same $O(\nu,n-\nu)$-orbit.
\end{proof}

\begin{lemma}\label{lemma connection pseudo}
If $(M,g,P)$ is an $(r,s)$-infinitesimally $P$-homogeneous
manifold. Then there is a metric connection $\bar{\nabla}$ such
that $\bar{\nabla}_X(\nabla^i R)=0$ for $i=0,\ldots,r+1$, and
$\bar{\nabla}_X(\nabla^j P)=0$ for $j=0,\ldots,s+1$.
\end{lemma}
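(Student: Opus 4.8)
The plan is to construct the connection $\bar\nabla$ by building a flat-seeming horizontal distribution on $\mathcal O(M)$ whose associated covariant differentiation kills all the required tensors, using the single-orbit structure provided by Lemma \ref{lemma orbit pseudo}. Concretely, I would fix the reference $u_0$ over $p$ together with the point
$$w_0=\Phi_{(r,s)}(u_0)\in\bigoplus_{i=0}^{r+1}\Bigl(\textstyle\bigotimes^{i+4}(\R^n)^*\Bigr)\oplus\bigoplus_{j=0}^{s+1}\Bigl(\textstyle\bigotimes^{j}(\R^n)^*\otimes\mathbf P\Bigr),$$ and recall that by Lemma \ref{lemma orbit pseudo} the image $\Phi_{(r,s)}(\mathcal O(M))$ is exactly the single orbit $O(\nu,n-\nu)\cdot w_0$. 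The key observation is that the stabilizer of $w_0$ in $O(\nu,n-\nu)$ is precisely the group $H(p,r,s)$ transported to $\R^n$ via $u_0$, so that $\Phi_{(r,s)}$ descends to a diffeomorphism between the quotient $\mathcal O(M)/{\sim}$ and the homogeneous space $O(\nu,n-\nu)/H(p,r,s)$, at least fibrewise. The idea is then to define $\bar\nabla$ as the connection whose parallel transport moves references in a way compatible with this identification.

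First I would make precise the sub-bundle $Q\subset\mathcal O(M)$ consisting of those references $u$ with $\Phi_{(r,s)}(u)=w_0$, i.e.\ the preimage of the single point $w_0$ under $\Phi_{(r,s)}$. By the single-orbit property this $Q$ is a reduction of $\mathcal O(M)$ with structure group $H(p,r,s)$: it is nonempty (it contains $u_0$), it meets every fibre of $\pi$ (since for each $q$ there is an isometry carrying the tensors over $q$ to those over $p$, hence a reference $u$ over $q$ with $\Phi_{(r,s)}(u)=w_0$), and its isotropy is exactly $H(p,r,s)$. On $Q$ I would choose any principal connection $\omega$ — for instance the restriction to $Q$ of any metric connection on $\mathcal O(M)$ projected onto $\f h(p,r,s)$ along a vector-space complement, the point being merely that a principal connection on the $H(p,r,s)$-bundle $Q$ always exists. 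Extending the horizontal distribution of $\omega$ to all of $\mathcal O(M)$ by $O(\nu,n-\nu)$-equivariance defines a metric linear connection $\bar\nabla$ on $M$.

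It then remains to verify that $\bar\nabla$ annihilates the prescribed tensors. This follows from the defining property of $Q$: the tensors $\nabla^iR$ and $\nabla^jP$, read off in any reference $u\in Q$, are the \emph{constant} components recorded in $w_0$, because $\Phi_{(r,s)}(u)=w_0$ for every $u\in Q$. Since the horizontal curves of $\bar\nabla$ through a point of $Q$ stay inside $Q$, parallel transport with respect to $\bar\nabla$ preserves these components, which is exactly the statement $\bar\nabla(\nabla^iR)=0$ for $i=0,\dots,r+1$ and $\bar\nabla(\nabla^jP)=0$ for $j=0,\dots,s+1$. Expressed invariantly: the equivariant map $\Phi_{(r,s)}$ sends the $\bar\nabla$-horizontal spaces into the tangent spaces of the single orbit, so the covariant derivative of each displayed tensor field vanishes in the directions that trivialize it along $Q$, and equivariance propagates this to all of $M$.

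The main obstacle I expect is purely the bookkeeping needed to show that $Q$ is genuinely a principal $H(p,r,s)$-sub-bundle and that the extended distribution is well defined and smooth. In the Riemannian case one sidesteps this by using the Killing form to produce a canonical horizontal complement $\f g(p,k(p))^{\perp}$, but here the Killing form of $\f{so}(T_pM)$ need not be definite, so there is no canonical complement; the construction above only asserts \emph{existence} of a principal connection on $Q$, which is all Lemma \ref{lemma connection pseudo} claims. The subtle point to check carefully is the equivariant extension: one must confirm that defining the horizontal space at $u\cdot B$ (for $B\in O(\nu,n-\nu)$) as $(R_B)_*$ of the horizontal space at $u\in Q$ is consistent, i.e.\ independent of the chosen $u\in Q$ in the same fibre, which reduces to the $H(p,r,s)$-equivariance of $\omega$ on $Q$. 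Once this consistency is in place, smoothness and the metric property are immediate, and the annihilation of the tensors is the direct translation of $\Phi_{(r,s)}|_Q\equiv w_0$.
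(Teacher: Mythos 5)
Your proposal is correct and follows essentially the same route as the paper: Lemma \ref{lemma orbit pseudo} turns $\Phi_{(r,s)}$ into an equivariant map onto a single orbit, the preimage $Q=\Phi_{(r,s)}^{-1}(\Phi_{(r,s)}(u_0))$ is a reduction of $\mathcal{O}(M)$ (with structure group the stabilizer of $\Phi_{(r,s)}(u_0)$, i.e.\ $H(p,r,s)$ read through $u_0$), and any connection adapted to $Q$ annihilates the tensors because their components are constant along $Q$. Your extra verifications (nonemptiness of $Q$ over each fibre, equivariant extension of the horizontal distribution) are exactly the standard bookkeeping the paper leaves implicit in the phrase ``determines a reduction.''
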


\begin{proof}
Let $u_0\in P$ with $\pi(u_0)=p$ and $\Phi=\Phi_{(r,s)}$. By Lemma
\ref{lemma orbit pseudo} the orbit $\Phi(P)$ is the homogeneous
space $O(\nu,n-\nu)/I_0$ where $I_0$ is the isotropy group of
$\Phi(u_0)$. We thus have an equivariant map
$\Phi:\mathcal{O}(M)\to O(\nu,n-\nu)/I_0$, so that
$Q=\Phi^{-1}(\Phi(u_0))$ determines a reduction of
$\mathcal{O}(M)$ with group $I_0$. Since $\Phi$ restricted to $Q$
is constant, all the tensor fields $\nabla^iR$ and $\nabla^jP$,
$i=0,\ldots,r+1$, $j=0,\ldots,s+1$, will be parallel with respect
to any connection adapted to $Q$.
\end{proof}

\begin{lemma}\label{lemma subbundles pseudo}
If $(M,g,P)$ is an $(r,s)$-infinitesimally $P$-homogeneous
manifold, then $$\f{h}(M,r,s)=\bigcup_{q\in M}\f{h}(q,r,s)$$ is a
vector subbundle of $\f{so}(M)$. If $(M,g,P)$ is moreover
$(r,s)$-strongly reductive, then $$\f{n}(M,r,s)=\bigcup_{q\in
M}\f{n}(q,r,s)$$ is a vector subbundle of $\f{so}(M)$ and
$\f{so}(M)=\f{h}(M,r,s)\oplus\f{n}(M,r,s)$.
\end{lemma}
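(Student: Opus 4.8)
The plan is to exhibit $\f{h}(M,r,s)$ and $\f{n}(M,r,s)$ as the images of smooth vector-bundle morphisms, or equivalently as subsets cut out locally by constant-rank conditions, and then invoke the reduction $Q$ already constructed in Lemma \ref{lemma connection pseudo}. The key point is that the fibrewise data defining $\f{h}(q,r,s)$ and $\f{n}(q,r,s)$ vary in a locally constant (indeed $I_0$-equivariant) fashion once we pass to the reduction $Q$, so that the subbundle structure follows by transporting a fixed model fibre around by the structure group.

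First I would recall from Lemma \ref{lemma connection pseudo} that $(r,s)$-infinitesimal $P$-homogeneity gives a reduction $Q=\Phi^{-1}(\Phi(u_0))$ of $\mathcal{O}(M)$ to the group $I_0$, on which all the tensors $\nabla^iR$, $\nabla^jP$ ($i\le r+1$, $j\le s+1$) have constant coordinate expression. Fix the model fibre $\f{h}_0\subset\f{so}(\R^n)$ consisting of those $A\in\f{so}(\R^n)$ annihilating the fixed tensors $u_0^*(\nabla^iR_p)$ and $u_0^*(\nabla^jP_p)$ for $i\le r$, $j\le s$; this is exactly $\widetilde{u_0}^{-1}(\f{h}(p,r,s))$ under the identification $A\mapsto u_0\circ A\circ u_0^{-1}$. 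For any $u\in Q$ lying over $q$, the defining equations for $\f{h}(q,r,s)$ pulled back by $u$ coincide with those defining $\f{h}_0$, precisely because $u^*(\nabla^iR_q)=u_0^*(\nabla^iR_p)$ and $u^*(\nabla^jP_q)=u_0^*(\nabla^jP_p)$ on $Q$. Hence
\begin{equation*}
\f{h}(q,r,s)=\{\,u\circ A\circ u^{-1}\;:\;A\in\f{h}_0\,\},\qquad u\in Q,\ \pi(u)=q,
\end{equation*}
which realizes $\f{h}(M,r,s)$ as the associated bundle $Q\times_{I_0}\f{h}_0$, a subbundle of $\f{so}(M)=Q\times_{I_0}\f{so}(\R^n)$. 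The consistency of this description across different choices of $u$ in the fibre is guaranteed because $I_0$ stabilizes all the tensors $u_0^*(\nabla^iR_p)$, $u_0^*(\nabla^jP_p)$, hence its $\mathrm{Ad}$-action preserves $\f{h}_0$; this is the point where the fibre must be checked to be $I_0$-invariant.

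For the second assertion, under $(r,s)$-strong reductivity we fix the model complement $\f{n}_0=\widetilde{u_0}^{-1}(\f{n}(p,r,s))$, which is $\mathrm{Ad}(H(p,r,s))$-invariant and hence, in particular, $\mathrm{Ad}(I_0)$-invariant since $I_0$ is identified with a subgroup of the stabilizer $H(p,r,s)$ via $\hat{u}_0$. Exactly as above, $\f{n}(M,r,s)=Q\times_{I_0}\f{n}_0$ is then a subbundle of $\f{so}(M)$, and the fibrewise splitting $\f{so}(\R^n)=\f{h}_0\oplus\f{n}_0$ transported by the structure group gives the Whitney sum decomposition $\f{so}(M)=\f{h}(M,r,s)\oplus\f{n}(M,r,s)$. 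The independence of $\f{n}(q,r,s)$ from the chosen isometry $F$ established in Lemma \ref{lemma strongly reductive independent of F} is what guarantees that the associated-bundle fibre is well defined over each point and agrees with the pointwise complement.

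The main obstacle I would expect is not the bundle construction itself but verifying that the reduction $Q$ is genuinely compatible with the algebraic data, i.e.\ that on $Q$ the isotropy group $I_0$ coincides (through $\hat{u}_0$) with the stabilizer $H(p,r,s)$ so that both $\f{h}_0$ and $\f{n}_0$ are honestly $I_0$-invariant; without this the associated-bundle fibres need not be well defined. Equivalently, one must confirm that the stabilizer of the full tuple $\Phi(u_0)$ is precisely the group whose Lie algebra is $\f{h}(p,r,s)$ and under whose adjoint action $\f{n}(p,r,s)$ is invariant. Once this identification is in place, smoothness and the direct-sum decomposition are immediate from the associated-bundle formalism.
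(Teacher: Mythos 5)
Your proof is correct, but it takes a genuinely different route from the paper's. You work globally with the principal-bundle reduction $Q=\Phi_{(r,s)}^{-1}(\Phi_{(r,s)}(u_0))$ with structure group $I_0$, and realize $\f{h}(M,r,s)$ and $\f{n}(M,r,s)$ as the associated bundles $Q\times_{I_0}\f{h}_0$ and $Q\times_{I_0}\f{n}_0$, so that smoothness and the Whitney splitting come for free from the $\mathrm{Ad}(I_0)$-invariance of the model fibres; note that the ``obstacle'' you flag at the end is in fact immediate, since $I_0$ is by definition the stabilizer of $\Phi_{(r,s)}(u_0)=u_0^*(R_p,\ldots,\nabla^{r+1}R_p,P_p,\ldots,\nabla^{s+1}P_p)$, hence $\hat{u}_0(I_0)=H(p,r,s)$ on the nose. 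The paper instead uses only the stated output of Lemma \ref{lemma connection pseudo}, namely the metric connection $\bar{\nabla}$ with $\bar{\nabla}(\nabla^iR)=0$ and $\bar{\nabla}(\nabla^jP)=0$, and produces the local frames that the definition of vector subbundle explicitly demands: a basis of $\f{h}(q,r,s)$ (resp.\ of $\f{n}(q,r,s)$) is extended by $\bar{\nabla}$-parallel transport along radial geodesics in a normal neighborhood, and since this parallel transport is a linear isometry preserving the tensors $\nabla^iR$, $\nabla^jP$ up to the relevant orders, it carries $\f{h}(q,r,s)$ into $\f{h}(y,r,s)$ and, by Lemma \ref{lemma strongly reductive independent of F}, carries $\f{n}(q,r,s)$ into $\f{n}(y,r,s)$. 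Both arguments are fed by the same inputs (the single-orbit Lemma \ref{lemma orbit pseudo}, which underlies the reduction $Q$, and Lemma \ref{lemma strongly reductive independent of F}); yours buys a cleaner, coordinate-free global description of the two subbundles, while the paper's is more elementary and hands-on, exhibiting the local trivializing sections directly and avoiding the associated-bundle formalism altogether.
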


\begin{proof}
To prove that $\f{h}(M,r,s)$ is a vector subbundle of $\f{so}(M)$
we have to find a neighborhood $U$ around every $q\in M$ admitting
local sections $\{H_1,\ldots,H_t\}$ such that
$\{H_1(y),\ldots,H_t(y)\}$ is a basis of $\f{h}(y,r,s)$ for every
$y\in U$. Let $\bar{\nabla}$ be a linear connection as in Lemma
\ref{lemma connection pseudo}, we take a normal neighborhood $U$
around $q$ with respect to $\bar{\nabla}$. Let
$\{H_1(q),\ldots,H_t(q)\}$ be a basis of $\f{h}(q,r,s)$, we extend
them by parallel transport with respect to $\bar{\nabla}$ along
radial $\bar{\nabla}$-geodesics in order to define
$\{H_1(y),\ldots,H_t(y)\}$. Since $\bar{\nabla}_X(\nabla^i R)=0$
for $i=0,\ldots,r+1$, and $\bar{\nabla}_X(\nabla^j P)=0$ for
$j=0,\ldots,s+1$, the parallel transport from $q$ to $y$ defines a
linear isometry $F:T_qM\to T_yM$ with $F^*(\nabla^i R_y)=\nabla^i
R_q$ for $i=0,\ldots,r+1$, and $F^*(\nabla^j P_y)=\nabla^j P_q$
for $j=0,\ldots,s+1$. This implies that $H_i(y)\in\f{h}(y,r,s)$.
If $(M,g,P)$ is $(r,s)$-strongly reductive, we consider the
decomposition $\f{so}(T_qM)=\f{h}(q,r,s)\oplus\f{n}(q,r,s)$ and
take a basis $\{\eta_1(q),\ldots,\eta_d(q)\}$ of $\f{n}(q,r,s)$.
Extending $\{\eta_1(q),\ldots,\eta_d(q)\}$ by parallel transport
along radial $\bar{\nabla}$-geodesics, we obtain local sections
$\eta_1,\ldots,\eta_d$ of $\f{so}(M)$ defined on $U$. As seen in
Lemma \eqref{lemma strongly reductive independent of F}, the
linear isometries $F$ determined by the parallel transport takes
$\f{n}(q,r,s)$ to $\f{n}(y,r,s)$ for $y\in U$, whence
$\{\eta_1(y),\ldots,\eta_d(y)\}$ is a basis of $\f{n}(y,r,s)$ for
every $y\in U$.
\end{proof}

\begin{theorem}\label{theorem 1 strongly reductive}
Let $(M,g,P)$ be an $(r,s)$-infinitesimally $P$-homogeneous
manifold. If $(M,g,P)$ is $(r,s)$-strongly reductive with a
decomposition $\f{so}(T_pM)=\f{n}(p,r,s)\oplus\f{h}(p,r,s)$ with
$\f{n}(M,r,s)$ $\mathrm{Ad}(H(p,r,s))$-invariant, then there is a
unique ASK-connection $\wnabla$ such that $S=\wnabla-\nabla$ is a
section of $T^*M\otimes\f{n}(M,r,s)$.
\end{theorem}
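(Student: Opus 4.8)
The plan is to carry Kowalski's construction of the canonical connection over to the pseudo-Riemannian setting, the role played in the Riemannian case by the orthogonal complement of $\f h(p,r,s)$ with respect to the (definite) Killing form being taken here by the $\mathrm{Ad}(H(p,r,s))$-invariant complement $\f n(M,r,s)$ furnished by strong reductivity. First I would use Lemma \ref{lemma connection pseudo} and Lemma \ref{lemma orbit pseudo} to reduce $\mathcal O(M)$ to the subbundle $Q=\Phi^{-1}(\Phi(u_0))$, whose structure group $I_0$ has Lie algebra $\f h(p,r,s)$, and Lemma \ref{lemma subbundles pseudo} to record the splitting $\f{so}(M)=\f h(M,r,s)\oplus\f n(M,r,s)$ into subbundles preserved by every $Q$-adapted connection. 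I would then fix any metric connection $\bar\nabla$ adapted to $Q$ (for instance the one produced by Lemma \ref{lemma connection pseudo}), set $D=\bar\nabla-\nabla$, a section of $T^*M\otimes\f{so}(M)$, and define $\wnabla=\nabla+S$ with $S=D_{\f n}$ the $\f n(M,r,s)$-component of $D$. Since any two $Q$-adapted connections differ by a section of $T^*M\otimes\f h(M,r,s)$, the component $D_{\f n}$ — and hence $S$ and $\wnabla$ — is independent of the choice of $\bar\nabla$, and $\wnabla=\bar\nabla-D_{\f h}$ is again adapted to $Q$.

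Three of the four defining conditions are then immediate: $\wnabla$ is metric because $S$ takes values in $\f{so}(M)$, so $\wnabla g=0$; $\wnabla$ is adapted to $Q$ and $\Phi|_Q$ is constant, so the argument of Lemma \ref{lemma connection pseudo} gives $\wnabla(\nabla^iR)=0$ and $\wnabla(\nabla^jP)=0$ in the relevant ranges, in particular $\wnabla R=0$ and $\wnabla P=0$; and $S$ is a section of $T^*M\otimes\f n(M,r,s)$ by construction.

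The main obstacle is the equation $\wnabla S=0$, and the key point is that $S$ is, pointwise, a universal algebraic function of the data $\Phi$. Writing $\nabla=\bar\nabla-D$ and using $\bar\nabla(\nabla^iR)=0$, $\bar\nabla(\nabla^jP)=0$, one obtains for each $X$ the linear relations $D_X\cdot\nabla^iR=-i_X\nabla^{i+1}R$ $(0\le i\le r)$ and $D_X\cdot\nabla^jP=-i_X\nabla^{j+1}P$ $(0\le j\le s)$, whose set of solutions $D_X\in\f{so}(T_qM)$ is a coset of $\f g(q,r)\cap\f p(q,s)=\f h(q,r,s)$. Hence $S_X=(D_X)_{\f n}$ is determined by $\{\nabla^iR_q,\nabla^jP_q\}$ alone, the projection onto $\f n$ killing precisely the $\f h(q,r,s)$-ambiguity; this is exactly where strong reductivity is indispensable, since only the $\mathrm{Ad}(H(p,r,s))$-invariance of the complement makes this projection onto a fixed model space $\f n_0\subset\f{so}(T_pM)$ the same in every frame of $Q$, hence frame-independent. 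Consequently the $I_0$-equivariant function $S^\flat\colon Q\to(\R^n)^*\otimes\f n_0$ representing $S$ is of the form $S^\flat=\Xi\circ\Phi$ for a fixed map $\Xi$; as $\Phi\equiv\Phi(u_0)$ on $Q$, the function $S^\flat$ is constant on $Q$, and a constant equivariant function is parallel for the principal connection defined by $\wnabla$, giving $\wnabla S=0$. (Alternatively one could verify $\wnabla\widetilde T=0$ and $\wnabla\widetilde R=0$, as in the proof of Theorem \ref{theorem 1}.) This shows $\wnabla$ is an ASK-connection.

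For uniqueness, suppose $\wnabla'$ is an ASK-connection with $S'=\wnabla'-\nabla$ a section of $T^*M\otimes\f n(M,r,s)$. Using $\nabla=\wnabla'-S'$ together with $\wnabla'R=0$, $\wnabla'P=0$ and $\wnabla'S'=0$, an induction shows $\wnabla'(\nabla^iR)=0$ and $\wnabla'(\nabla^jP)=0$ for all $i,j$; being metric and preserving all these tensors, $\wnabla'$-parallel transport preserves $\Phi$, hence the fibre $Q$, so $\wnabla'$ is adapted to $Q$. Then $\wnabla'-\wnabla=S'-S$ is at once a section of $T^*M\otimes\f h(M,r,s)$ (a difference of $Q$-adapted connections) and of $T^*M\otimes\f n(M,r,s)$, and since $\f h(M,r,s)\cap\f n(M,r,s)=0$ we conclude $S'=S$, i.e. $\wnabla'=\wnabla$.
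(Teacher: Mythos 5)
Your construction of $\wnabla$ is, up to sign conventions, exactly the paper's: take $\bar{\nabla}$ from Lemma \ref{lemma connection pseudo}, split the difference tensor $\nabla-\bar{\nabla}$ according to $\f{so}(M)=\f{h}(M,r,s)\oplus\f{n}(M,r,s)$ (Lemma \ref{lemma subbundles pseudo}), and keep only the $\f{n}$-part, so that $\wnabla$ differs from $\bar{\nabla}$ by an $\f{h}(M,r,s)$-valued term and therefore is metric and annihilates $\nabla^iR$, $\nabla^jP$ in the stated ranges. Where you genuinely diverge is in the key step $\wnabla S=0$. The paper argues tensorially: it differentiates the identity $i_X\nabla^{i+1}R=S_X\cdot\nabla^iR$ along radial $\wnabla$-geodesics to conclude that the derivative of $S$ takes values in $\f{h}(M,r,s)$, while the $\mathrm{ad}(\f{h}(M))$-invariance of $\f{n}(M)$ forces it to take values in $\f{n}(M,r,s)$, and the two intersect in zero. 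You argue instead on the reduction $Q$: pointwise, $S_X$ is the \emph{unique} solution in $\f{n}$ of the linear system $A\cdot\nabla^iR=-i_X\nabla^{i+1}R$, $A\cdot\nabla^jP=-i_X\nabla^{j+1}P$ (the solution set being a coset of $\f{h}$), so the equivariant function representing $S$ is constant on $Q$, and constancy plus adaptedness of $\wnabla$ to $Q$ gives $\wnabla S=0$ at once. Both arguments hinge on the same two facts, $\f{h}\cap\f{n}=0$ and the invariance of the complement, but yours buys a little extra rigor: the paper's geodesic computation, as literally written with $X=\dot{\gamma}$, only controls the diagonal contraction $(\wnabla_XS)_X$, and one must still let the contraction vector vary independently of the direction of differentiation (e.g.\ differentiate the identity, which holds on a whole neighborhood, in an arbitrary direction) to get the full statement; your pointwise-uniqueness argument delivers it in one stroke, and makes transparent why the $\mathrm{Ad}(H(p,r,s))$-invariance of the complement is what renders $S$ canonical. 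For uniqueness both proofs reduce to the same observation, namely that $S-S'$ annihilates all the relevant $\nabla^iR$, $\nabla^jP$ and hence lies in $\f{h}\cap\f{n}=0$; your detour through adaptedness of $\wnabla'$ to $Q$ is an equivalent packaging of the paper's direct computation.
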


\begin{proof}
Let $\f{h}(M)$ denote $\f{h}(M,r,s)$ and let $\f{n}(M)$ denote
$\f{n}(M,r,s)$. Let $\bar{\nabla}$ be a linear connection as in
Lemma \ref{lemma connection pseudo}. We consider the tensor field
$B=\nabla-\bar{\nabla}$, which defines a section of
$T^*M\otimes\f{so}(M)$ as $\bar{\nabla}$ is metric. In virtue of
Lemma \ref{lemma subbundles pseudo} we decompose
$$B=B^{\f{h}}+B^{\f{n}},$$
with $B^{\f{h}}$ and $B^{\f{n}}$ sections of $T^*M\otimes\f{h}(M)$
and $T^*M\otimes\f{n}(M)$ respectively. We define $S=B^{\f{n}}$,
and take $\wnabla=\nabla-S$. Since $S$ is a section of
$T^*M\otimes\f{so}(M)$ we have that $\wnabla$ is metric, so that
$\wnabla g=0$. Moreover
$$\wnabla_X(\nabla^i R)=\bar{\nabla}_X(\nabla^i R)+B^{\f{h}}_X\cdot(\nabla^i R)=0,\qquad i=0,\ldots,r+1,$$
$$\wnabla_X(\nabla^j P)=\bar{\nabla}_X(\nabla^j P)+B^{\f{h}}_X\cdot(\nabla^j P)=0,\qquad j=0,\ldots,s+1,$$
since $(r,s)$ is a stabilizing pair. Finally, let $q\in M$ and
consider a normal neighborhood of $q$ with respect to $\wnabla$.
Since
$$0=\wnabla_X(\nabla^i R)=i_X(\nabla^{i+1}R)-S_X\cdot(\nabla^i R),$$
$$0=\wnabla_X(\nabla^j P)=i_X(\nabla^{j+1}P)-S_X\cdot(\nabla^j P),$$
differentiating these formulae along a radial $\wnabla$-geodesic
$\gamma(t)$ we find
$$0=0-\frac{d}{dt}\left(S_{\dot{\gamma}(t)}\cdot(\nabla^i R)_{\gamma(t)}\right)=-\left(\wnabla_{\dot{\gamma}(t)}S\right)\cdot(\nabla^i R)_{\gamma(t)},$$
$$0=0-\frac{d}{dt}\left(S_{\dot{\gamma}(t)}\cdot(\nabla^j P)_{\gamma(t)}\right)=-\left(\wnabla_{\dot{\gamma}(t)}S\right)\cdot(\nabla^j P)_{\gamma(t)},$$
for $i=0,\ldots,r$ and $j=0,\ldots,s$. This means that
$\wnabla_{\dot{\gamma}(t)}S\in\f{h}(\gamma(t),r,s)$. In addition,
as a consequence of the $\mathrm{ad}(\f{h}(M))$-invariance of
$\f{n}(M)$, the covariant derivative of a section of $\f{n}(M)$ is
again a section of $\f{n}(M)$, so that
$\wnabla_{\dot{\gamma}(t)}S\in\f{n}(\gamma(t),r,s)$. We conclude
that $\wnabla S=0$.

We finally prove uniqueness. Let $\wnabla$ and $\wnabla'$ be as in
the hypothesis, then $S-S'$ is a section of $T^*M\otimes
\f{n}(M)$. In addition $\wnabla(\nabla^iR)=\wnabla'(\nabla^i R)=0$
and $\wnabla(\nabla^j P)=\wnabla'(\nabla^j P)=0$ for all $i,j$.
These are easily obtained from the fact that the torsion and the
curvature of $\wnabla$ (resp. $\wnabla'$) are parallel with
respect to $\wnabla$ (resp. $\wnabla'$), and from $\wnabla
P=\wnabla' P=0$. This implies that $S-S'$ is a section of
$T^*M\otimes \f{h}(M)$, and then $S=S'$ and $\wnabla=\wnabla'$.
\end{proof}

\begin{corollary}
Let $(r,s)$ and $(r',s')$ be stabilizing pairs. If
$\f{n}(p,r,s)\subset\f{n}(p,r',s')$, then the connections
$\wnabla$ and $\wnabla'$ constructed from them coincide.
\end{corollary}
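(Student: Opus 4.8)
The plan is to deduce the equality from the uniqueness clause of Theorem~\ref{theorem 1 strongly reductive}. Both $\wnabla$ and $\wnabla'$ are ASK-connections produced by that theorem, the first with $S=\wnabla-\nabla$ a section of $T^*M\otimes\f{n}(M,r,s)$ and the second with $S'=\wnabla'-\nabla$ a section of $T^*M\otimes\f{n}(M,r',s')$. If I can upgrade the hypothesis to an inclusion of subbundles $\f{n}(M,r,s)\subset\f{n}(M,r',s')$, then $S$ is automatically a section of $T^*M\otimes\f{n}(M,r',s')$; thus $\wnabla$ becomes an ASK-connection satisfying exactly the property that characterizes $\wnabla'$ for the pair $(r',s')$, and the uniqueness in Theorem~\ref{theorem 1 strongly reductive} forces $\wnabla=\wnabla'$.

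The heart of the matter is therefore to pass from the fibre inclusion $\f{n}(p,r,s)\subset\f{n}(p,r',s')$ at the single point $p$ to the fibre inclusion $\f{n}(q,r,s)\subset\f{n}(q,r',s')$ at every $q\in M$. Here I would reuse the construction behind Lemma~\ref{lemma subbundles pseudo}: both subbundles are obtained by parallel transport of their fibres at $p$ along radial geodesics of an auxiliary metric connection $\bar\nabla$ that parallelizes the tensors $\nabla^iR$ and $\nabla^jP$ to the appropriate order (Lemma~\ref{lemma connection pseudo}). The point is that one single $\bar\nabla$, adapted to the tensors up to orders $\max(r,r')+1$ and $\max(s,s')+1$, serves both pairs simultaneously, and such a $\bar\nabla$ exists because $(M,g,P)$ is infinitesimally $P$-homogeneous to the required order (in the locally homogeneous case, to all orders). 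Writing $F\colon T_pM\to T_qM$ for the parallel transport along a radial $\bar\nabla$-geodesic and $\widetilde{F}\colon\f{so}(T_pM)\to\f{so}(T_qM)$, $A\mapsto F\circ A\circ F^{-1}$, the independence-of-$F$ statement of Lemma~\ref{lemma strongly reductive independent of F} gives $\f{n}(q,r,s)=\widetilde{F}(\f{n}(p,r,s))$ and $\f{n}(q,r',s')=\widetilde{F}(\f{n}(p,r',s'))$ with the \emph{same} $F$. Since $\widetilde{F}$ is a linear isomorphism it preserves inclusions, and I obtain $\f{n}(q,r,s)\subset\f{n}(q,r',s')$ as desired.

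With the subbundle inclusion established, the conclusion is immediate: $S$ is a section of $T^*M\otimes\f{n}(M,r',s')$, the connection $\wnabla$ is an ASK-connection, and the uniqueness part of Theorem~\ref{theorem 1 strongly reductive} applied to the pair $(r',s')$ gives $S=S'$, that is $\wnabla=\wnabla'$. I expect the only genuine obstacle to be the second paragraph, namely arranging a common trivializing transport for the two complements, which are a priori defined through isometries adapted to different orders of the tensor data; the resolution is to choose a single sufficiently high-order auxiliary connection $\bar\nabla$ and to invoke the independence of the transported fibres from the chosen isometry.
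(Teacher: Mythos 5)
Your proposal is correct and takes essentially the same route as the paper: the paper's one-line proof simply observes that $S=\wnabla-\nabla$ is a section of both $T^*M\otimes\f{n}(M,r,s)$ and $T^*M\otimes\f{n}(M,r',s')$ and then concludes by the uniqueness clause of Theorem \ref{theorem 1 strongly reductive}. Your middle paragraph just makes explicit the step the paper treats as evident, namely promoting the fibre inclusion at $p$ to an inclusion of subbundles by transporting both complements with one common adapted connection and invoking the independence-of-$F$ statement of Lemma \ref{lemma strongly reductive independent of F}; this is a legitimate filling-in of detail, not a different argument.
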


\begin{proof}
This is evident since $S=\wnabla-\nabla$ is a section of both
$\f{n}(M,r,s)$ and $\f{n}(M,r',s')$.
\end{proof}

\bigskip



As we have seen, a strongly reductive locally homogeneous
pseudo-Rieman\-nian manifold $(M,g,P)$ with $P$ invariant admits
an ASK-connec\-tion, so by Theorem \ref{theorem 2} there is a Lie
pseudo-group $\mathcal{G}$ (which is not necessarily the full
isometry pseudo-group) acting transitively by isometries and
preserving $P$ such that $(M,g,P,\mathcal{G})$ is reductive.
Moreover, we shall show that strongly reductive locally
homogeneous spaces with an invariant geometric structure $P$ are
reductive for the action of the full pseudo-group of isometries
preserving $P$. In order to prove that we will make use of some
results contained in Section \ref{section reconstruction} and the
following Lemma.

\begin{lemma}\label{lemma h0 contained in gkill}
Let $\wnabla$ be an ASK-connection with curvature $K$ and torsion
$T$. Let $p\in M$, and let $A\in\f{so}(T_pM)$ be such that $A\cdot
K_p=0$, $A\cdot T_p=0$ and $A\cdot P_p=0$. Then $A\cdot
\nabla^iR_p=0$ and $A\cdot\nabla^jP_p=0$ for all $i,j\geq 0$.
\end{lemma}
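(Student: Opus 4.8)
The plan is to show that every tensor $\nabla^iR_p$ and $\nabla^jP_p$ is obtained from the $\wnabla$-parallel tensors $K$, $T$, $P$ and $g$ by a fixed sequence of tensor products and metric contractions --- a ``natural'' algebraic expression --- after which the conclusion is immediate: $A\in\f{so}(T_pM)$ acts as a derivation commuting with contractions and annihilating $g_p$ (by skew-symmetry), so the Leibniz rule together with $A\cdot K_p=A\cdot T_p=A\cdot P_p=0$ forces $A$ to annihilate any such expression.

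First I would record the algebraic input. Writing $S=\wnabla-\nabla$, metricity of both connections makes $S$ a section of $T^*M\otimes\f{so}(M)$, and since $\nabla$ is torsion-free the torsion of $\wnabla$ is $T(X,Y)=S(X,Y)-S(Y,X)$. The skew-symmetry of each $S_X$ lets one invert this relation by the usual contorsion-from-torsion (Koszul-type) formula, expressing $S$ as a universal $O(T_pM)$-equivariant expression in $T$ and $g$ alone. In particular $A\cdot T_p=0$ together with $A\cdot g_p=0$ gives $A\cdot S_p=0$, so I may freely use $S$ in place of $T$ among the building blocks. I would also note that an ASK-connection annihilates $g$, $S$, $R$ and $P$ by definition, and hence annihilates its own curvature $K$ and torsion $T$ as well (the equivalence recalled in the proof of Theorem \ref{theorem 2}).

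The heart of the argument is an induction on the number of Levi-Civita derivatives, built on the single identity $\nabla_X\omega=\wnabla_X\omega-S_X\cdot\omega$, valid for every tensor $\omega$ (here $S_X\cdot$ denotes the derivation extension of the endomorphism $S_X$). For the base case I would feed the standard curvature-comparison formula
$$R(X,Y)Z=K(X,Y)Z-(\nabla_X S)(Y,Z)+(\nabla_Y S)(X,Z)-S(X,S(Y,Z))+S(Y,S(X,Z))$$
and substitute $\nabla_X S=-S_X\cdot S$ (which follows from $\wnabla S=0$); this exhibits $R$ as a natural expression in $K$ and $S$, while $P$ is trivially such an expression. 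The inductive step is then automatic: if $\omega$ is a natural expression in $K$, $S$, $P$, $g$, then $\wnabla\omega=0$ by the Leibniz rule, so $\nabla_X\omega=-S_X\cdot\omega$ is again an expression of the same kind. Applying this repeatedly to $R$ and to $P$ shows that all $\nabla^iR_p$ and $\nabla^jP_p$ are natural expressions in $K_p$, $S_p$, $P_p$, $g_p$, and the opening remark closes the proof.

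I expect the only real subtlety to be bookkeeping: making sure the derivation $S_X\cdot$ and the action $A\cdot$ genuinely commute with contractions and respect $g$, so that a single vanishing factor annihilates an entire term, and confirming that the contorsion inversion in the first step uses only $g$, so that $A\cdot S_p=0$ indeed follows from $A\cdot T_p=0$. These are the points to verify with care; everything else is formal Leibniz-rule manipulation.
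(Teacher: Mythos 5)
Your proof is correct and is essentially the paper's own argument: both deduce $A\cdot S_p=0$ from $A\cdot T_p=0$ and $A\cdot g_p=0$ via the contorsion formula, recover $A\cdot R_p=0$ from the comparison identity relating $K$, $R$ and $S$, and then induct on the order of differentiation using $i_X\nabla^{i+1}R=S_X\cdot\nabla^iR$ (resp. $i_X\nabla^{j+1}P=S_X\cdot\nabla^jP$), which follows from $\wnabla(\nabla^iR)=0$ and $\wnabla(\nabla^jP)=0$, together with the fact that $A$ acts as a derivation commuting with contractions and annihilating the building blocks. Your ``natural expression'' formulation is simply a repackaging of the paper's explicit inductive formula $(A\cdot\nabla^{i+1}R_p)_X=(A\cdot S_p)_X\cdot\nabla^iR_p+(S_p)_X\cdot(A\cdot\nabla^iR_p)$.
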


\begin{proof}
The curvature and torsion of $\wnabla$ are related to $R$ and $S$
by
$$T_XY=S_YX-S_XY, \qquad K_{XY}=R_{XY}+[S_X,S_Y]+S_{T_XY}.$$
Making use of these formulae in conjunction with $\wnabla R=0$ and
$\wnabla S=0$, an inductive argument gives that
$\wnabla(\nabla^iR)=0$ for all $i\geq 0$. A similar computation
gives $\wnabla(\nabla^jP)=0$ for all $j\geq 0$. This means that
$$i_X\nabla^{i+1}R=S_X\cdot \nabla^iR,\qquad i_X\nabla^{j+1}P=S_X\cdot \nabla^jP,$$
for all $i,j\geq 0$. Let now $A\in\f{so}(T_pM)$ be such that
$A\cdot K_p=0$, $A\cdot T_p=0$ and $A\cdot P_p=0$. By Corollary
\ref{corollary h deriving S} $A\cdot S_p=0$, hence $A\cdot R_p=0$.
A simple computation making use of the previous formulae leads to
$$(A\cdot \nabla^{i+1}R_p)_X=(A\cdot S_p)_X\cdot\nabla^iR_p+(S_p)_X\cdot(A\cdot \nabla^iR_p), \qquad i\geq 0,$$
$$(A\cdot \nabla^{j+1}P_p)_X=(A\cdot S_p)_X\cdot\nabla^{j}P_p+(S_p)_X\cdot(A\cdot \nabla^jP_p), \qquad j\geq 0.$$
Therefore, by induction on $i$ and $j$ we obtain that $A\cdot
\nabla^iR_p=0$ and $A\cdot\nabla^jP_p=0$ for all $i,j\geq 0$.
\end{proof}

\begin{proposition}\label{proposition strongly reduct implica reductiva para Isom}
If $(M,g,P)$ is $(r,s)$-strongly reductive, then
$(M,g,\mathcal{J})$ is reductive, where $\mathcal{J}$ is the full
Lie pseudo-group of local isometries preserving $P$.
\end{proposition}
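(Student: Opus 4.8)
The plan is to show that the strong reductivity condition allows us to build a reductive decomposition for the transitive Lie algebra $(\f{g},\f{k})$ attached to the full isometry pseudo-group $\mathcal{J}$, where now $\f{k}\simeq\f{h}_p\subset\f{so}(T_pM)$ is the isotropy algebra. The crucial observation is that for a locally homogeneous space the isotropy algebra $\f{h}_p$ is captured exactly by an algebra of the form $\f{h}(p,r,s)$ for suitable indices. Indeed, since $(M,g,P)$ is locally homogeneous with $P$ invariant, it is $(r,s)$-infinitesimally $P$-homogeneous for every pair, and the Killing generators at $p$ that vanish at $p$ are precisely the $A\in\f{so}(T_pM)$ annihilating all the tensors $\nabla^iR_p$ and $\nabla^jP_p$; by the stabilization of the filtrations this coincides with $\f{h}(p,k(p),l(p))$. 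Thus, identifying $\f{k}$ with $\f{h}(p,r,s)$ (using that $(r,s)$ is a stabilizing pair so that $\f{h}(p,r,s)=\f{h}(p,k(p),l(p))$), the $(r,s)$-strong reductivity hypothesis furnishes an $\mathrm{Ad}(H(p,r,s))$-invariant complement $\f{n}(p,r,s)$ with $\f{so}(T_pM)=\f{h}(p,r,s)\oplus\f{n}(p,r,s)$.

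\textbf{Next I would} produce from this linear complement an actual reductive complement $\f{m}$ inside $\f{g}$. The map realizing this is the Lie algebra isomorphism from the proof of Lemma \ref{lemma isomorphism with Killing gen}, under which a germ $[\xi]\in\f{g}$ corresponds to the Killing generator $(\xi_p,A_\xi|_p)\in T_pM\times\f{so}(T_pM)$, and its order-one part is $A_\xi|_p$. Since $(\f{g},\f{k})$ is transitive, the projection $[\xi]\mapsto\xi_p$ is surjective onto $T_pM$ with kernel $\f{k}$, and the order-one map sends $\f{k}$ isomorphically onto $\f{h}(p,r,s)\subset\f{so}(T_pM)$. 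I would define $\f{m}$ as the set of those $[\xi]\in\f{g}$ whose order-one term $A_\xi|_p$ lies in $\f{n}(p,r,s)$. Because the order-one term of an element of $\f{k}$ lands in $\f{h}(p,r,s)$ and the total order-one map restricted to $\f{g}$ has image containing $\f{h}(p,r,s)$, the direct sum $\f{so}(T_pM)=\f{h}(p,r,s)\oplus\f{n}(p,r,s)$ lifts to a direct sum $\f{g}=\f{m}\oplus\f{k}$. Here I would invoke the results of Section \ref{section reconstruction} referenced in the paper to guarantee that the order-one component determines $[\xi]$ modulo $\f{k}$ appropriately, i.e.\ that $\f{m}$ so defined is a genuine vector-space complement.

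\textbf{Finally I would} verify $\mathrm{Ad}(H_p)$-invariance of $\f{m}$. The action of $H_p$ on $\f{g}$ restricts on the order-one part to the standard adjoint action $A\mapsto F\circ A\circ F^{-1}$ of $H_p$ on $\f{so}(T_pM)$, as recorded in the preliminaries. Since $H_p$ consists of differentials of local isometries preserving $P$, every $F\in H_p$ stabilizes all the tensors $\nabla^iR_p$ and $\nabla^jP_p$, so $F\in H(p,r,s)$; hence $H_p\subset H(p,r,s)$. Because $\f{n}(p,r,s)$ is $\mathrm{Ad}(H(p,r,s))$-invariant, it is a fortiori $\mathrm{Ad}(H_p)$-invariant, and therefore the defining condition $A_\xi|_p\in\f{n}(p,r,s)$ is preserved by the $\mathrm{Ad}(H_p)$-action, giving $\mathrm{Ad}(H_p)(\f{m})\subset\f{m}$. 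This establishes that $(\f{g},\f{k})$ admits the reductive decomposition, i.e.\ $(M,g,\mathcal{J})$ is reductive.

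\textbf{The main obstacle} I expect is the second step: making precise that the order-one Killing-generator map identifies $\f{k}$ with the full algebra $\f{h}(p,r,s)$ and lifts the linear complement to a genuine complement in $\f{g}$. This rests on the claim that for the full isometry pseudo-group the isotropy algebra is exactly $\f{h}(p,k(p),l(p))$ and that a stabilizing pair $(r,s)$ gives the same algebra; establishing this identification cleanly requires the reconstruction results from Section \ref{section reconstruction} together with Lemma \ref{lemma h0 contained in gkill}, which guarantees that an element of $\f{so}(T_pM)$ annihilating the curvature, torsion and $P$ of an ASK-connection annihilates all the $\nabla^iR_p$ and $\nabla^jP_p$. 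The remaining verifications are routine once this bridge between the abstract isotropy algebra and the concrete stabilizer algebra $\f{h}(p,r,s)$ is in place.
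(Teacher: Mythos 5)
Your proof is correct, and it rests on the same two pillars as the paper's own argument: the identification $\f{j}^0\simeq\f{gkill}^0=\f{h}(p,r,s)$, obtained exactly as you indicate from Lemma \ref{lemma h0 contained in gkill} together with Proposition \ref{proposition h=k}, and the inclusion $H_p\subset H(p,r,s)$. The genuine difference is the description of the complement and, consequently, the invariance argument. The paper takes $\f{m}$ to be the image of $T_pM$ under the Lie algebra isomorphism $\Phi:\f{g}_0\to\f{gkill}$ coming from the Nomizu construction, i.e.\ the graph $\{(X,S_X)\,/\,X\in T_pM\}$ of the tensor $S$, and then proves $\mathrm{Ad}(H_p)$-invariance via the equivariance $\mathrm{Ad}_B(S_X)=S_{BX}$ of Lemma \ref{lemma H deriving S}. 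You instead take $\f{m}=\{(X,A)\in\f{gkill}\,/\,A\in\f{n}(p,r,s)\}$, the preimage of $\f{n}(p,r,s)$ under the order-one map, which makes $\mathrm{Ad}(H_p)$-invariance immediate from the $\mathrm{Ad}(H(p,r,s))$-invariance of $\f{n}(p,r,s)$, with no need for Lemma \ref{lemma H deriving S}. The two subspaces in fact coincide: each $(X,S_X)$ lies in your $\f{m}$, and conversely if $(X,A)\in\f{gkill}$ with $A\in\f{n}(p,r,s)$ then $(0,A-S_X)\in\f{gkill}^0=\f{h}(p,r,s)$, so $A-S_X\in\f{h}(p,r,s)\cap\f{n}(p,r,s)=\{0\}$ and $A=S_X$. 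What each route buys: in the paper's version complementarity of $\f{m}$ is automatic (it is a graph over $T_pM$) but invariance costs a lemma; in yours invariance is free but complementarity requires the correction argument you sketch, namely splitting the order-one part of an arbitrary Killing generator as $A=A^{\f{h}}+A^{\f{n}}$ and subtracting $(0,A^{\f{h}})$, which is legitimate precisely because $\f{h}(p,r,s)=\f{gkill}^0$. Since both routes consume the same heavy input (the ASK-connection of Theorem \ref{theorem 1 strongly reductive} and the tensor $S$ behind Proposition \ref{proposition h=k}), neither is more general, but your packaging of the final step is slightly more economical.
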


\begin{proof}
Let $\f{so}(T_pM)=\f{n}(p,r,s)\oplus\f{h}(p,r,s)$, and let
$\wnabla$ be the associated ASK-connection. Let $K$ and $T$ be the
curvature and the torsion tensor fields of $\wnabla$ respectively.
The triple $(K,T,P)$ defines an infinitesimal model (see
Definition \ref{definition infinitesimal model} and Proposition
\ref{proposition infinitesimal model}), and we can consider the
associated Nomizu construction, that is, we define the Lie algebra
$\f{g}_0=T_pM\oplus\f{h}_0$ with the usual brackets, where
$$\f{h}_0=\{A\in\f{so}(T_pM)/\, A\cdot K_p=0,\, A\cdot T_p=0,\, A\cdot P_p=0\}.$$
By Proposition \ref{proposition h=k} the Lie algebra $\f{h}_0$ is
equal to $\f{h}(p,r,s)$. On the other hand,
$\f{h}_0\subset\f{gkill}^0\simeq \f{j}^0$ by Lemma \ref{lemma h0
contained in gkill}, and $\f{gkill}^0\subset\f{h}$ by definition,
whence $\f{gkill}^0\subset\f{h}=\f{h}_0$. We thus define the
following Lie algebra isomorphism
$$\begin{array}{rrcl}
\Phi: & \f{g}_0 & \to & \f{gkill}\\
      & X+A  & \mapsto & (X,(S_0)_X+A).
\end{array}$$
The image of $T_pM$ defines a complement $\f{m}$ of $\f{gkill}^0$.
Making use of Lemma \ref{lemma H deriving S} we have that
$\mathrm{Ad}_{B}(S_X)=S_{BX}$ for all $B$ in $H(p,r,s)$ and all
$X\in T_pM$. Since the linear isotropy group $H_p$  is contained
in $H(p,r,s)$ we have that $\f{m}$ is
$\mathrm{Ad}(H_p)$-invariant.
\end{proof}

\section{Reconstruction of strongly reductive\\ locally homogeneous
spaces}\label{section reconstruction}

We first show a uniqueness result satisfied by strongly reductive
locally homogeneous pseudo-Riemannian manifolds with an invariant
geometric structure.

\begin{proposition}\label{proposition uniqueness pseudo}
Let $(M,g,P)$ and $(M',g',P')$ be pseudo-Riemannian manifolds with
tensor fields $P$ and $P'$. Suppose $(M',g',P')$ is locally
homogeneous with $P'$ invariant. Suppose furthermore that
$(M',g',P')$ is $(r,s)$-strongly reductive for some stabilizing
pair $(r,s)$. If for each point $p\in M$ there is a linear
isometry $F:T_p M\to T_oM'$ (where $o\in M'$ can be fixed) such
that $F^*(\nabla'^i R'_o)=\nabla^i R_p$ for $=0,\ldots,r+1$, and
$F^*(\nabla'^jP_o)=\nabla^jP_p$ for $j=0,\ldots,s+1$. Then
$(M,g,P)$ is locally homogeneous with $P$ invariant and locally
isometric to $(M',g',P')$ preserving $P$ and $P'$.
\end{proposition}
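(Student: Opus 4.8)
The plan is to upgrade the pointwise isometric agreement between $(M,g,P)$ and the model $(M',g',P')$ into an actual local isometry, using the strongly reductive ASK-connection on $M'$ to transport structure back to $M$. First I would observe that the hypothesis says every tangent space $T_pM$ matches the single tangent space $T_oM'$ up to order $(r+1,s+1)$ via some linear isometry $F$. Composing two such isometries at different points of $M$ shows that $(M,g,P)$ is itself $(r,s)$-infinitesimally $P$-homogeneous; moreover $(r,s)$ is a stabilizing pair at every point of $M$, since $F$ induces isomorphisms $\f{h}(p,i,j)\cong\f{h}(o,i,j)$ for $i\le r$, $j\le s$, and stabilization is an isomorphism-invariant condition. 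Likewise the $\mathrm{Ad}(H(o,r,s))$-invariant complement $\f{n}(o,r,s)$ can be pulled back through $\widetilde{F}$ to give $\f{n}(p,r,s)=\widetilde{F}(\f{n}(o,r,s))$, and by the argument of Lemma \ref{lemma strongly reductive independent of F} this is independent of the chosen $F$ and $\mathrm{Ad}(H(p,r,s))$-invariant. Hence $(M,g,P)$ is $(r,s)$-strongly reductive.

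Having established that both manifolds are $(r,s)$-infinitesimally $P$-homogeneous and $(r,s)$-strongly reductive with compatible complements, I would invoke Theorem \ref{theorem 1 strongly reductive} to obtain the unique ASK-connections $\wnabla$ on $M$ and $\wnabla'$ on $M'$ whose difference tensors $S$ and $S'$ are sections of $T^*M\otimes\f{n}(M,r,s)$ and $T^*M'\otimes\f{n}(M',r,s)$ respectively. The crucial point is that these ASK-connections are determined \emph{locally and algebraically} by the curvature and torsion data at a point together with the reductive complement. The curvature $K$, torsion $T$ of $\wnabla$ and the tensor $P$ at $p$ are computed from $R_p,\nabla R_p,S_p,P_p$ via the structure equations $T_XY=S_YX-S_XY$ and $K_{XY}=R_{XY}+[S_X,S_Y]+S_{T_XY}$, and $S_p$ lives in $\f{n}(p,r,s)$. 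Since the linear isometry $F:T_pM\to T_oM'$ matches $\nabla^iR$ and $\nabla^jP$ up to the stabilizing order and carries $\f{n}(p,r,s)$ to $\f{n}(o,r,s)$, it matches $S_p$ with $S'_o$ and hence carries the triple $(K_p,T_p,P_p)$ to $(K'_o,T'_o,P'_o)$.

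Next I would appeal to the affine reconstruction theorem for analytic/parallel connections: two manifolds carrying connections with parallel torsion and curvature that agree at a point under a linear isomorphism preserving torsion and curvature are locally affinely isomorphic (see \cite[Vol.\ I, Ch.\ VI]{KN}). Concretely, fix $p\in M$ and the given $F:T_pM\to T_oM'$; since $\wnabla K=0$, $\wnabla T=0$, $\wnabla'K'=0$, $\wnabla'T'=0$, and $F$ intertwines $(K_p,T_p)$ with $(K'_o,T'_o)$, there is a local affine diffeomorphism $\Upsilon:\mathcal{U}_p\to\mathcal{U}_o$ with $\Upsilon_{*p}=F$ carrying $\wnabla$ to $\wnabla'$. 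Because $F$ is a linear isometry and both connections are metric with $\wnabla P=0$, $\wnabla'P'=0$ while $F^*P'_o=P_p$, parallelness forces $\Upsilon^*g'=g$ and $\Upsilon^*P'=P$, so $\Upsilon$ is the desired local isometry preserving the geometric structure. Finally, transitivity of local isometries of $M'$ (which is locally homogeneous by hypothesis) together with these maps $\Upsilon$ based at varying points of $M$ yields local homogeneity of $(M,g,P)$ with $P$ invariant.

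The main obstacle I anticipate is the passage from \emph{infinitesimal agreement of tensors} to an \emph{honest local affine map}: one must verify that $F$ genuinely intertwines the full curvature and torsion tensors of the ASK-connections (not merely the Levi-Civita data $\nabla^iR$, $\nabla^jP$), which requires tracing $S_p\leftrightarrow S'_o$ through the nonlinear structure equations and confirming that $F(\f{n}(p,r,s))=\f{n}(o,r,s)$ is exactly what makes $S$ match. Once that algebraic matching at the single point $(p,o)$ is secured, the parallelism $\wnabla K=\wnabla T=0$ does all the remaining work through the standard Kobayashi--Nomizu construction, and the metric and $P$-preservation follow automatically from $\wnabla g=0$, $\wnabla P=0$ and the fact that $F$ is an isometry fixing $P$.
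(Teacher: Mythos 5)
Your proposal is correct and follows essentially the same route as the paper: transport the reductive complement $\f{n}$ through $F$ to make $(M,g,P)$ $(r,s)$-strongly reductive, use Theorem \ref{theorem 1 strongly reductive} to get the canonical ASK-connections, show $F^*(S'_o)=S_p$ (the paper does this by the explicit computation that the difference lies in $\f{h}(p,r,s)\cap\f{n}(p,r,s)=\{0\}$, which is exactly the verification you flag as the ``main obstacle''), and then invoke the Kobayashi--Nomizu affine equivalence theorem plus $\wnabla g=0$, $\wnabla P=0$ to upgrade to a local isometry preserving $P$. The only cosmetic differences are that you phrase the matching of $S$ as a uniqueness/naturality argument rather than the two-line computation, and you deduce local homogeneity at the end rather than at the outset.
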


\begin{proof}
Note first of all that $(M,g,P)$ is $(r,s)$-infinitesimally
$P$-homoge\-neous and $(r,s)$-strongly reductive, so that
$(M,g,P)$ is locally homogeneous with $P$ invariant. Let $\wnabla$
and $\wnabla'$ be connections on $M$ and $M'$ respectively as in
Theorem \ref{theorem 1 strongly reductive}. Let $S=\nabla-\wnabla$
and $S'=\nabla'-\wnabla'$, and let $F:T_pM\to T_oM'$ be as in the
hypothesis. It is obvious that $F^*(S'_o)-S_p\in
T^*_pM\otimes\f{n}(p,r,s)$. In addition
$$\left(F^*(S'_{o})_X-(S_{p})_X\right)\cdot(\nabla^{i}R_p)=i_X\nabla^{i+1}R_p-i_X\nabla^{i+1}R_p=0,\hspace{1em} i=0,\ldots,r,$$
$$\left(F^*(S'_{o})_X-(S_{p})_X\right)\cdot(\nabla^{j}P_p)=i_X\nabla^{j+1}P_p-i_X\nabla^{j+1}P_p=0,\hspace{1em} j=0,\ldots,s,$$
so that $F^*(S'_{o})_X-(S_{p})_X\in\f{h}(p,r,s)$. We conclude that
$F^*(S'_o)=S_p$. Since the torsion of $\wnabla$ is $S_YX-S_XY$,
and a similar formula holds for the torsion of $\wnabla'$, as a
simple inspection shows, $F$ preserves the curvature and the
torsion of $\wnabla$ and $\wnabla'$, which are parallel with
respect to $\wnabla$. Therefore, there are neighborhoods $U$ and
$V$ around $p$ and $o$ respectively, and an affine map $f:U\to V$
with respect to $\wnabla$ and $\wnabla'$ (see \cite[Ch. 7]{KN}).
Since $\wnabla$ and $\wnabla'$ are metric and $\wnabla
P=\wnabla'P'=0$, we have that $f$ is an isometry preserving $P$
and $P'$.
\end{proof}

\bigskip

Theorem \ref{theorem 1 strongly reductive} and Proposition
\ref{proposition uniqueness pseudo} suggest the possibility of
reconstructing a strongly reductive locally homogeneous manifold
$(M,g,P)$ with $P$ invariant from the knowledge of the curvature
tensor field, the tensor field $P$, and their covariant
derivatives at a point $p\in M$ up to finite order. In order to
prove this result we must first examine the algebraic properties
of the curvature tensor field, $P$ and its covariant derivatives.

Let $(M,g,P)$ be a locally homogeneous pseudo-Riemannian manifold
with $P$ invariant. We fix a point $p\in M$ and set $V=T_pM$.
Consider the tensors $R^i=\nabla^i R_p$ and $P^j=\nabla^j P_p$ for
$i,j\geq 0$. One has
\begin{align}
& R^0_{XYZW}=-R^0_{YXZW}=R^0_{ZWXY},\label{infinit information
properties 1}\\
& \Cyclic_{\SXYZ} R^0_{XYZW}=0,\\
& R^1_{X Y ZVW} = -R^1_{X ZY VW} = R^1_{X VWY Z},\\
& \Cyclic_{\SYZV} R^1_{XYZVW}=0,\\
& \Cyclic_{\SXYZ} R^1_{XYZVW}=0,\\
& R^{i+2}_{YX}-R^{i+2}_{XY}=R^0_{XY}\cdot R^i,\label{infinit
information properties 6}\\
& P^{j+2}_{YX}-P^{j+2}_{XY}=R^0_{XY}\cdot P^j,\label{infinit
information properties 7}
\end{align}
for $i,j\geq 0$, where $R^0_{XY}$ is acting as a derivation on the
tensor algebra. In addition, let $\wnabla$ be an ASK-connection
and $S=\nabla-\wnabla$, we have that
$$i_XR^{i+1}=S_X\cdot R^i,\qquad i_XP^{j+1}=S_X\cdot P^j,$$
for $0\leq i\leq r+1$, $0\leq j\leq s+1$, where $(r,s)$ is a
stabilizing pair. We thus consider the following linear maps
$$\begin{array}{rrcl}
\mu_{i,j}: & \f{so}(V) & \to & W_{i,j}\\
           &    A      & \mapsto & (A\cdot R^0,\ldots, A\cdot R^i,A\cdot P^0,\ldots,A\cdot
           P^j),
\end{array}$$
and
$$\begin{array}{rrcl}
\nu: & V & \to & W_{r+1,s+1}\\
     & X & \mapsto &
     (i_XR^1,\ldots,i_XR^{r+2},i_XP^1,\ldots,i_XP^{s+2}),
\end{array}$$
with
$$W_{i,j}=\left[\bigoplus_{\alpha=0}^{i}\left(\otimes^{\alpha+4}V^*\right)\right]\otimes\left[\bigoplus_{\beta=0}^j\left((\otimes^{\beta}V^*)\otimes\mathbf{P}\right)\right],$$
where $\mathbf{P}$ is the space of tensors to which $P^0$ belongs.
The previous discussion for a stabilizing pair $(r,s)$ thus gives
\begin{equation}\label{property nu}
\nu(V)\subset \mu_{r+1,s+1}(\f{so}(V)),
\end{equation}
and
\begin{equation}\label{property ker mu}
ker(\mu_{r,s})=ker(\mu_{r+1,s})=ker(\mu_{r,s+1})=ker(\mu_{r+1,s+1}).
\end{equation}
Finally, let $H(r,s)$ be the stabilizer of $R^0,\ldots,R^{r+1}$
and $P^0,\ldots,P^{s+1}$ inside $O(V)$. In view of Theorem
\ref{theorem 1 strongly reductive}, to assure the existence of an
ASK-connection we need that
\begin{equation}\label{condition strongly-reductivity}
\f{so}(V)=ker(\mu_{r,s})\oplus\f{n}
\end{equation}
for an $\mathrm{Ad}_{H(r,s)}$-invariant subspace $\f{n}$. We shall
prove the following result.

\begin{theorem}\label{reconstruction theorem pseudo-Riemannian}
Let $V$ be a vector space endowed with an inner product $\langle\,
,\, \rangle$. Let $R^0$,...,$R^{r+2}$, $P^0,\ldots,P^{s+2}$ be
tensors on $V$ satisfying \eqref{infinit information properties
1},...,\eqref{infinit information properties 7} for $0\leq i\leq
r$ and $0\leq j\leq s$, and such that \eqref{property nu},
\eqref{property ker mu}, and \eqref{condition
strongly-reductivity} hold. Then
\begin{enumerate}
\item There is an $(r,s)$-strongly reductive locally homogeneous
pseudo-Rie\-mannian manifold $(M,g,P)$ with $P$ invariant, whose
curvature tensor field, $P$, and their covariant derivatives
coincide with $R^0,\ldots,\\ R^{r+2}$, $P^0,\ldots,P^{s+2}$ at a
point $p\in M$. Moreover, $(M,g,P)$ is unique up to local isometry
preserving $P$.

\item If the infinitesimal data $R^0,\ldots,R^{r+2}$,
$P^0,\ldots,P^{s+2}$ is regular (see Definitions \ref{definition
regular 1} and \ref{definition regular 2}), then there is an
$(r,s)$-strongly reductive globally homogeneous pseudo-Riemannian
space $(G_0/H_0,g,P)$, whose curvature tensor field, $P$, and
their covariant derivatives coincide with $R^0,\ldots,R^{r+2}$,
$P^0,\ldots,P^{s+2}$ at a point $p\in M$. $(G_0/H_0,g,P)$ is
moreover unique up to local isometry preserving $P$.
\end{enumerate}
\end{theorem}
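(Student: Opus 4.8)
The plan is to realize the purely algebraic data $(R^0,\ldots,R^{r+2},P^0,\ldots,P^{s+2})$ geometrically by first building a connection-theoretic object and then appealing to Nomizu's construction together with the uniqueness result already proved in Proposition~\ref{proposition uniqueness pseudo}. First I would set $V=\R^m$ with the given inner product, and use the hypothesis \eqref{condition strongly-reductivity} to fix the $\mathrm{Ad}_{H(r,s)}$-invariant complement $\f{n}$ of $\ker(\mu_{r,s})=\f{h}(p,r,s)$ inside $\f{so}(V)$. Using \eqref{property nu}, for each $X\in V$ there is some $A\in\f{so}(V)$ with $\nu(X)=\mu_{r+1,s+1}(A)$; by \eqref{property ker mu} the class of $A$ modulo $\ker(\mu_{r+1,s+1})=\ker(\mu_{r,s})$ is determined, so projecting onto $\f{n}$ yields a well-defined linear map $S\colon V\to\f{n}\subset\f{so}(V)$, $X\mapsto S_X$, characterized by $i_XR^{i+1}=S_X\cdot R^i$ and $i_XP^{j+1}=S_X\cdot P^j$. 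This $S$ is the algebraic shadow of the difference tensor $\nabla-\wnabla$, and it is the key device for transferring the stabilizing-pair identities into the structure equations of an infinitesimal model.

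Next I would define the candidate curvature and torsion of the to-be-built AS(K)-connection by the formulae forced in the excerpt, namely $T_XY=S_YX-S_XY$ and $K_{XY}=R^0_{XY}+[S_X,S_Y]+S_{T_XY}$, and set $P=P^0$. The core verification is that the triple $(K,T,P)$ is an infinitesimal model in the sense of Definition~\ref{definition infinitesimal model}: this is where the curvature symmetries \eqref{infinit information properties 1}--\eqref{infinit information properties 5}, the Ricci-type identities \eqref{infinit information properties 6}--\eqref{infinit information properties 7}, and the defining relation for $S$ must be combined to produce the first and second Bianchi identities for $K$ and $T$ and the infinitesimal invariance of $P$ under $\f{h}_0=\{A\in\f{so}(V): A\cdot K=0,\,A\cdot T=0,\,A\cdot P=0\}$. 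I would carry this out by feeding $i_X$ into the stabilizing relations and using that $A\cdot$ is a derivation; the identity $A\cdot K=0$ for $A\in\f{h}(p,r,s)$ follows because such $A$ annihilate all $R^i,P^j$ and commute appropriately with $S$, as encoded in the lemmas already available (Lemma~\ref{lemma h0 contained in gkill} and Corollary~\ref{corollary h deriving S}). Once $(K,T,P)$ is an infinitesimal model, Proposition~\ref{proposition infinitesimal model} together with the Nomizu construction $\f{g}_0=V\oplus\f{h}_0$ yields a germ of a locally homogeneous pseudo-Riemannian manifold $(M,g,P)$ whose $\wnabla$ has curvature $K$ and torsion $T$; integrating $i_XR^{i+1}=S_X\cdot R^i$ along $\wnabla$-geodesics recovers precisely $\nabla^iR_p=R^i$ and $\nabla^jP_p=P^j$ up to the required order, and the construction makes $(M,g,P)$ manifestly $(r,s)$-strongly reductive with complement $\f{n}$. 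Uniqueness up to local isometry preserving $P$ is then exactly Proposition~\ref{proposition uniqueness pseudo}, applied with $(M',g',P')$ equal to any two such realizations.

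For part~(2) I would invoke the regularity hypotheses (Definitions~\ref{definition regular 1} and~\ref{definition regular 2}): regularity is precisely the condition under which the Nomizu-type Lie algebra $\f{g}_0=V\oplus\f{h}_0$ integrates to a Lie group $G_0$ with closed isotropy $H_0$ (the Lie algebra of $H_0$ being $\f{h}_0$), so that the previously constructed germ extends to the genuine homogeneous space $G_0/H_0$ carrying an invariant metric $g$ and invariant tensor $P$ inducing $R^i,P^j$ at the base point; the strong reductivity and the uniqueness statement are inherited from part~(1). The main obstacle I expect is the infinitesimal-model verification in the second paragraph: showing that the prescribed $(K,T,P)$ satisfies the full list of Bianchi and invariance axioms requires careful bookkeeping, in particular deducing the second Bianchi identity for $K$ from \eqref{infinit information properties 6} via the defining property of $S$, and checking that $\f{h}_0$ computed from $(K,T,P)$ coincides with $\f{h}(p,r,s)=\ker(\mu_{r,s})$ (this is the content invoked as Proposition~\ref{proposition h=k}), since that equality is what guarantees the complement $\f{n}$ remains a reductive complement for the resulting isotropy.
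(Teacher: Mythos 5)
Your preparation (the construction of $S\colon V\to\f{n}$ from \eqref{property nu}, \eqref{property ker mu} and \eqref{condition strongly-reductivity}, the definitions $T_XY=S_YX-S_XY$, $K_{XY}=R^0_{XY}+[S_X,S_Y]+S_{T_XY}$, $P=P^0$, the verification that $(T,K,P)$ is an infinitesimal model, and the identification of $\f{h}_0$ with $\f{h}$) coincides with the paper's setup, and your treatment of part (2) and of uniqueness via Proposition \ref{proposition uniqueness pseudo} is also the paper's. The genuine gap is in part (1): you assert that ``Proposition \ref{proposition infinitesimal model} together with the Nomizu construction $\f{g}_0=V\oplus\f{h}_0$ yields a germ of a locally homogeneous pseudo-Riemannian manifold.'' The Nomizu construction is purely algebraic; the only way it directly produces a manifold is by forming the coset space $G_0/H_0$, and that requires $H_0$ to be closed in $G_0$, i.e.\ regularity of the model (Definition \ref{definition regular 1}) --- exactly the hypothesis you are not allowed to use in part (1). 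For a non-regular model there is no quotient manifold, and your proposal offers no other mechanism for realizing the model geometrically. Moreover, since your part (2) is obtained by ``extending'' the germ of part (1), the whole plan rests on this unproved step. Note also that what you flag as the main obstacle (the infinitesimal-model axioms) is already disposed of by Proposition \ref{proposition infinitesimal model}; the real work lies in the realization step you passed over.

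The paper closes this gap with the Tricerri--Watanabe construction, which is the actual content of its proof of part (1): one never forms a quotient, but works on the simply-connected group $G_0$ itself. From the brackets of $\f{g}_0$ one writes the structure equations \eqref{structure equation 1} and \eqref{structure equation 2} for the left-invariant coframe $e^{\alpha}$ and the $\f{h}_0$-valued left-invariant form $\omega$, chooses a coordinate slice $f\colon(a_1,\ldots,a_n)\mapsto\phi^{-1}(a_1,\ldots,a_n,0,\ldots,0)$ through $e\in G_0$, and pulls everything back: on an open set $M\subset\R^n$ where the forms $\widetilde{E}^{\alpha}=f^*(e^{\alpha})$ are independent one defines $g=\sum_{\alpha}\widetilde{E}^{\alpha}\otimes\widetilde{E}^{\alpha}$, the tensors $\widetilde{T},\widetilde{K},\widetilde{P}$ with constant components in the coframe, and the metric connection $\wnabla$ with connection form $f^*\omega$. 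The structure equations identify $\widetilde{T}$ and $\widetilde{K}$ as the torsion and curvature of $\wnabla$, and since $f^*\omega$ takes values in $\f{h}_0$ these tensors and $\widetilde{P}$ are $\wnabla$-parallel, so $\wnabla$ is an ASK-connection and $(M,g,\widetilde{P})$ is the desired locally homogeneous realization; Remark \ref{remark recovery of infinitesimal information} then matches the covariant derivatives at the origin with the given data. Finally, part (2) should not be deduced from part (1): for a regular model the paper argues directly, taking $G_0/H_0$ with the canonical connection of the reductive decomposition $\f{g}_0=\f{h}_0\oplus V$, whose curvature and torsion are $K$ and $T$.
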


\begin{corollary}
An $(r,s)$-strongly reductive locally homogeneous
pseudo-Rieman\-nian manifold $(M,g,P)$ with $P$ invariant can be
reconstructed (up to local isometry) from the data $R_p,\ldots,
\nabla^{r+2}R_p$, $P_p,\ldots,\nabla^{s+2}P_p$, where $(r,s)$ is a
stabilizing pair.
\end{corollary}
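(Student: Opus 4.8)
The plan is to realize the abstract data $R^0,\ldots,R^{r+2}$, $P^0,\ldots,P^{s+2}$ as the infinitesimal data of a locally homogeneous space through a Nomizu-type construction, and then to pin down $(M,g,P)$ by means of the uniqueness already proved in Proposition \ref{proposition uniqueness pseudo}. The first step is to manufacture the difference tensor of the would-be ASK-connection. By \eqref{property nu}, for each $X\in V$ the element $\nu(X)$ lies in $\mu_{r+1,s+1}(\f{so}(V))$, so there is some $A\in\f{so}(V)$ with $A\cdot R^i=i_XR^{i+1}$ for $0\le i\le r+1$ and $A\cdot P^j=i_XP^{j+1}$ for $0\le j\le s+1$; this $A$ is unique modulo $\ker\mu_{r+1,s+1}=\ker\mu_{r,s}$ by \eqref{property ker mu}. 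Using the splitting \eqref{condition strongly-reductivity}, namely $\f{so}(V)=\ker\mu_{r,s}\oplus\f{n}$, I would select the unique representative $S_X\in\f{n}$, thereby obtaining a linear map $S\in V^*\otimes\f{n}$ with $i_XR^{i+1}=S_X\cdot R^i$ and $i_XP^{j+1}=S_X\cdot P^j$ in the indicated ranges.

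Next I would define candidate torsion and curvature tensors by the structure equations $T_XY=S_YX-S_XY$ and $K_{XY}=R^0_{XY}+[S_X,S_Y]+S_{T_XY}$, and set $\f{h}_0=\ker\mu_{r,s}$. The core of the argument is to verify that $(V,\f{h}_0,T,K,P^0)$ is an infinitesimal model in the sense of Definition \ref{definition infinitesimal model}. Invariance under $\f{h}_0$ rests on the derivation identity $[A,S_X]=S_{AX}$ for $A\in\f{h}_0$: applying $A$ to $S_X\cdot R^i=i_XR^{i+1}$ and using $A\cdot R^i=0$ (which holds for $i\le r+1$ by \eqref{property ker mu}) shows that $[A,S_X]-S_{AX}$ kills $R^0,\ldots,R^r$ and $P^0,\ldots,P^s$, hence lies in $\f{h}_0$; on the other hand, since $\f{n}$ is $\mathrm{Ad}(H(r,s))$-invariant one has $[A,S_X]\in[\f{h}_0,\f{n}]\subset\f{n}$ and $S_{AX}\in\f{n}$, so the difference lies in $\f{h}_0\cap\f{n}=\{0\}$. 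With this in hand $A\cdot T=0$, $A\cdot K=0$ and $A\cdot P^0=0$ for all $A\in\f{h}_0$, and the two Bianchi-type identities required of $(T,K)$ follow from the algebraic Bianchi identity $\Cyclic_{\SXYZ}R^0_{XYZW}=0$ together with the commutation relations \eqref{infinit information properties 6} and \eqref{infinit information properties 7}. These are precisely the identities making the Nomizu bracket on $\f{g}_0=V\oplus\f{h}_0$, given by $[A,B]=AB-BA$, $[A,X]=AX$, and $[X,Y]=K_{XY}-T_XY$, satisfy the Jacobi identity.

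For part (1), I would integrate the infinitesimal model to a locally homogeneous space: the reductive pair $(\f{g}_0,\f{h}_0)$ produces a canonical connection $\wnabla$ whose difference tensor against $\nabla$ is exactly $S$. Starting from the $0$-th order matching $g=\langle\,,\,\rangle$, $R^0$, $P^0$ and using the recursion $i_X\nabla^{i+1}R=S_X\cdot\nabla^iR$ and $i_X\nabla^{j+1}P=S_X\cdot\nabla^jP$ forced by $\wnabla R=\wnabla S=\wnabla P=0$, one checks inductively that the covariant derivatives of the reconstructed manifold reproduce $R^i$ and $P^j$ up to orders $r+2$ and $s+2$. The resulting space is $(r,s)$-infinitesimally $P$-homogeneous and $(r,s)$-strongly reductive, hence locally homogeneous with $P$ invariant, and its uniqueness up to local isometry preserving $P$ is Proposition \ref{proposition uniqueness pseudo}. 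For part (2), the regularity hypothesis (Definitions \ref{definition regular 1} and \ref{definition regular 2}) is exactly what upgrades this local integration to a global one: it guarantees that $\f{g}_0$ integrates to a Lie group $G_0$ in which $\f{h}_0$ exponentiates to a closed subgroup $H_0$, so that $G_0/H_0$, equipped with the induced metric and tensor, is a globally homogeneous realization of the data.

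The step I expect to be hardest is the verification that $(T,K,P^0)$ satisfies the infinitesimal-model axioms, i.e. that the Nomizu bracket really is a Lie bracket. The delicacy is that $S$ is only constrained on the finite range $i\le r+1$, $j\le s+1$, so every identity must be closed up within that range, using the stabilization \eqref{property ker mu} and the derivation relation $[A,S_X]=S_{AX}$; obtaining the Bianchi identities from \eqref{infinit information properties 6}--\eqref{infinit information properties 7} without calling on higher, unavailable derivatives is the main bookkeeping burden. Granting the Theorem, the closing Corollary is then immediate: for a given $(r,s)$-strongly reductive locally homogeneous $(M,g,P)$ with $P$ invariant, the data $R_p,\ldots,\nabla^{r+2}R_p$, $P_p,\ldots,\nabla^{s+2}P_p$ satisfies all the hypotheses---the identities \eqref{infinit information properties 1}--\eqref{infinit information properties 7} hold automatically, \eqref{property nu} and \eqref{property ker mu} come respectively from the ASK-connection of Theorem \ref{theorem 1 strongly reductive} and from $(r,s)$ being a stabilizing pair, and \eqref{condition strongly-reductivity} is strong reductivity---so Theorem \ref{reconstruction theorem pseudo-Riemannian}(1) reconstructs $(M,g,P)$ up to local isometry.
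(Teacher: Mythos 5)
Your proof is correct and takes essentially the same route as the paper: the corollary follows by noting that the data $R_p,\ldots,\nabla^{r+2}R_p$, $P_p,\ldots,\nabla^{s+2}P_p$ of an $(r,s)$-strongly reductive locally homogeneous manifold satisfies \eqref{infinit information properties 1}--\eqref{infinit information properties 7}, \eqref{property nu}, \eqref{property ker mu} and \eqref{condition strongly-reductivity}, so that Theorem \ref{reconstruction theorem pseudo-Riemannian}(1) together with the uniqueness of Proposition \ref{proposition uniqueness pseudo} reconstructs $(M,g,P)$ up to local isometry preserving $P$. Your sketch of the theorem itself also mirrors the paper's own proof (the $\f{n}$-valued map $S$, the infinitesimal model $(T,K,P)$, the Nomizu construction, and local versus global integration according to regularity), the only cosmetic difference being that you establish the derivation identity $[A,S_X]=S_{AX}$ infinitesimally from $[\f{h},\f{n}]\subset\f{n}$, where the paper proves the group-level Lemma \ref{lemma H deriving S} and deduces Corollary \ref{corollary h deriving S}.
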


Before proving Theorem \ref{reconstruction theorem
pseudo-Riemannian} we need to recall the definition of
\textit{infinitesimal model} and show that an infinitesimal model
can be associated to every suitable infinitesimal data
$R^0,\ldots,R^{s+2}$, $P^0,\ldots,P^{r+2}$ satisfying the
hypotheses of Theorem \ref{reconstruction theorem
pseudo-Riemannian}.

Let $V$ be a vector space with an inner product $\langle\,
,\,\rangle$, and let $P$ be a tensor on $V$. We consider morphisms
$$T:V\to\mathrm{End}(V),\qquad K:V\times V\to \mathrm{End}(V).$$

\begin{definition}\label{definition infinitesimal model}
A triple $(T,K,P)$ is called an infinitesimal model if the
following properties are satisfied:
\begin{align}
& T_XY +T_YX=0 \label{infinitesimal model properties 1}\\
& K_{XY}Z+K_{YX}Z=0\label{infinitesimal model properties 2}\\
& \langle K_{XY}Z,W\rangle +\langle K_{WZ}X,Y\rangle =0\label{infinitesimal model properties 3}\\
& K_{XY}\cdot T=0\label{infinitesimal model properties T}\\
& K_{XY}\cdot K=0\label{infinitesimal model properties R}\\
& K_{XY}\cdot P=0\label{infinitesimal model properties P}\\
& \Cyclic_{\SXYZ}(K_{XY}Z+T_{T_XY}Z)=0\label{infinitesimal model properties 7}\\
& \Cyclic_{\SXYZ} K_{T_XYZ}=0 \label{infinitesimal model
properties 8}
\end{align}
\end{definition}

When the geometric structure $P$ is absent, an infinitesimal model
is just a pair $(T,K)$ satisfying the previous properties with the
exception of \eqref{infinitesimal model properties P}.

Let $\wnabla$ be an ASK-connection on $(M,g,P)$. For a fixed point
$p\in M$ we take $V=T_pM$, $\langle\, ,\, \rangle=g_p$, $P=P_p$,
and $T$ and $K$ the torsion and curvature of $\wnabla$ at $p$
respectively. It is easy to see that in that case $(T,K,P)$
satisfies \eqref{infinitesimal model properties
1},...,\eqref{infinitesimal model properties 8}, so that it
defines an infinitesimal model. The converse is true under
suitable conditions that must be explained. To every infinitesimal
model $(T,K,P)$ one can associate the so called \textit{Nomizu
construction}, that is, the Lie algebra
$$\f{g}_0=\f{h}_0\oplus V,$$
where
$$\f{h}_0=\{A\in\f{so}(V)/\, A\cdot K=0,\,A\cdot T=0,\, A\cdot P=0\},$$
and the Lie brackets are defined by
\begin{align*}
[A,B]&=AB-BA,\qquad A,B\in\f{h}_0,\\
[A,X]&=A\cdot X,\qquad A\in\f{h}_0,X\in V,\\
[X,Y]&=-T_{X}Y+K_{XY},\qquad X,Y\in V.
\end{align*}

Note that $K_{XY}\in\f{h}_0$. Let $\f{h}_0'$ be the subalgebra
spanned by all elements $K_{XY}$ (which in the case when $(T,K,P)$
comes from an ASK-connection coincides with the holonomy algebra
of $\wnabla$), the Lie algebra $\f{g}_0'=\f{h}_0'\oplus V$ is the
so called \textit{transvection algebra} (see \cite{Ko}).

We now consider the abstract simply-connected Lie group $G_0$ with
Lie algebra $\f{g}_0$, and its connected Lie subgroup $H_0$ with
Lie algebra $\f{h}_0$. We also consider the simply-connected Lie
group $G'_0$ with Lie algebra $\f{g}'_0$, and its connected Lie
subgroup $H'_0$ with Lie algebra $\f{h}'_0$.

\begin{definition}\label{definition regular 1}
We say that the infinitesimal model $(T,K,P)$ is regular if $H_0$
is closed in $G_0$. On the other hand, we say that the
transvection algebra $(\f{g}'_0,\f{h}'_0)$ is regular if $H'_0$ is
closed in $G'_0$.
\end{definition}


In the case when $(T,K,P)$ (resp. the transvection algebra) is
regular, the quotient $G_0/H_0$ (resp. $G'_0/H'_0$) is a
pseudo-Riemannian homogeneous space with an invariant tensor field
$\bar{P}$ coinciding with $P$ at the origin. 

We now show how to associate an infinitesimal model to every
suitable data $R^0,\ldots,R^{r+2}$, $P^0,\ldots,P^{s+2}$ on $V$
satisfying the hypotheses of Theorem \ref{reconstruction theorem
pseudo-Riemannian}. We define $\f{h}=ker(\mu_{r+1,s+1})$, and
consider an $Ad(H(r,s))$-invariant complement $\f{n}$ of $\f{h}$
inside $\f{so}(V)$. From \eqref{property nu} we have that for
every $X\in V$ there is an endomorphism $A(X)\in \f{so}(V)$ such
that
\begin{align*}
i_XR^{i+1} & = A(X)\cdot R^{i},\qquad 0\leq i\leq r+1,\\
i_XP^{j+1} & = A(X)\cdot P^j,\qquad 0\leq j\leq s+1.
\end{align*}
We decompose $A(X)=A_1(X)+A_2(X)$, where $A_1(X)\in\f{h}$ and
$A_2(X)\in\f{n}$. Note that $A(X)$ is uniquely determined up to an
$\f{h}$-component, so that we can take the uniquely defined map
$$\begin{array}{rrcl}
S: & V & \to & \f{n}\\
   & X & \mapsto & S_X=A_2(X).
\end{array}$$
By the definition of $\f{h}$ it is evident that
\begin{align}
i_XR^{i+1} & = S_X\cdot R^{i},\qquad 0\leq i\leq r+1,\label{property SXR}\\
i_XP^{j+1} & = S_X\cdot P^j,\qquad 0\leq j\leq s+1.\label{property
SXP}
\end{align}
Moreover, by the same arguments used in \cite{Nic-Tri} one sees
that $S$ is a linear map.

\begin{lemma}\label{lemma H deriving S}
Let $B\in H(r,s)$, then $\mathrm{Ad}_B(S_{X})=S_{BX}$ for every
$X\in V$.
\end{lemma}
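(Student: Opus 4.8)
The plan is to characterize $S_{BX}$ uniquely and then check that $\mathrm{Ad}_B(S_X)$ meets the same characterization. By construction $S_{BX}$ is the unique element of $\f{n}$ satisfying the derivation identities \eqref{property SXR} and \eqref{property SXP} with $X$ replaced by $BX$. Uniqueness is available because $\f{n}$ is a complement of $\f{h}=ker(\mu_{r+1,s+1})$, and by \eqref{property ker mu} this kernel equals $ker(\mu_{r,s})$; hence $\mu_{r,s}$ restricts to an injection on $\f{n}$, so any element of $\f{n}$ is pinned down by the values $A\cdot R^i$ for $0\le i\le r$ and $A\cdot P^j$ for $0\le j\le s$. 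It therefore suffices to prove that $\mathrm{Ad}_B(S_X)$ lies in $\f{n}$ and that it produces these same values as $S_{BX}$.

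That $\mathrm{Ad}_B(S_X)\in\f{n}$ is immediate from the standing hypothesis that $\f{n}$ is $\mathrm{Ad}(H(r,s))$-invariant, together with $S_X\in\f{n}$ and $B\in H(r,s)$. For the derivation values I would invoke two standard equivariance facts for the $O(V)$-action on the tensor algebra: first, for $A\in\f{so}(V)$ and any tensor $\tau$,
$$(\mathrm{Ad}_B A)\cdot(B\cdot\tau)=B\cdot(A\cdot\tau),$$
obtained by differentiating $B\exp(tA)B^{-1}=\exp(t\,\mathrm{Ad}_B A)$ at $t=0$; and second, equivariance of the interior product, $B\cdot(i_X\tau)=i_{BX}(B\cdot\tau)$.

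Since $B\in H(r,s)$ stabilizes $R^i$ for $0\le i\le r+1$, applying the first relation with $\tau=R^i$, then \eqref{property SXR}, and then interior-product equivariance gives, for $0\le i\le r$,
\begin{align*}
(\mathrm{Ad}_B S_X)\cdot R^i &=(\mathrm{Ad}_B S_X)\cdot(B\cdot R^i)=B\cdot(S_X\cdot R^i)=B\cdot(i_X R^{i+1})\\
&=i_{BX}(B\cdot R^{i+1})=i_{BX}R^{i+1},
\end{align*}
where the final step uses that $B$ stabilizes $R^{i+1}$, which is legitimate precisely because $i+1\le r+1$. By \eqref{property SXR} the right-hand side is $S_{BX}\cdot R^i$. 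The identical computation with $P^j$ in place of $R^i$, using \eqref{property SXP}, yields $(\mathrm{Ad}_B S_X)\cdot P^j=S_{BX}\cdot P^j$ for $0\le j\le s$. Thus $\mathrm{Ad}_B(S_X)$ and $S_{BX}$ are two elements of $\f{n}$ with the same image under $\mu_{r,s}$, and hence they coincide.

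The one point requiring care, and the main bookkeeping obstacle, is the range of indices: the conjugation argument needs $B$ to stabilize $R^{i+1}$ (resp.\ $P^{j+1}$), which is guaranteed only up to index $r+1$ (resp.\ $s+1$), so the comparison can be carried out only for $0\le i\le r$ and $0\le j\le s$. This is exactly why one must invoke \eqref{property ker mu}: it ensures that membership in $\f{n}$ together with the values recorded by $\mu_{r,s}$ already determines the element, so that the restricted index range suffices and no control over $R^{r+2}$ or $P^{s+2}$ is needed.
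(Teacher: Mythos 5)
Your proof is correct and takes essentially the same route as the paper: both arguments show that $\mathrm{Ad}_B(S_X)$ and $S_{BX}$ produce the same derivation values on $R^i$ ($0\le i\le r$) and $P^j$ ($0\le j\le s$) using the $B$-invariance of these tensors, and then conclude equality because the difference lies in $\f{h}\cap\f{n}=\{0\}$, invoking \eqref{property ker mu} and the $\mathrm{Ad}(H(r,s))$-invariance of $\f{n}$. The paper's index-by-index computation is exactly what your two equivariance identities encapsulate, and it proves $\mathrm{Ad}_B(S_{B^{-1}X})=S_X$, which is your statement after the substitution $X\mapsto BX$.
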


\begin{proof}
By the definition of $H(r,s)$ and \eqref{property SXR} and
\eqref{property SXP} we have for $0\leq i\leq r$ and $0\leq j\leq
s$
\begin{align*}
R^{i+1}_{XZ_1\ldots Z_{i+4}} & = (B\cdot R^{i+1})_{XZ_1\ldots Z_{i+4}}\\
& = R^{i+1}_{B^{-1}XB^{-1}Z_1\ldots B^{-1}Z_{i+4}}\\
& =\left(S_{B^{-1}X}\cdot R^i\right)_{B^{-1}Z_1\ldots
B^{-1}Z_{i+4}}\\
& = -\sum_{\alpha}R^i_{B^{-1}Z_1\ldots
S_{B^{-1}X}B^{-1}Z_{\alpha} \ldots B^{-1}Z_{i+4}}\\
& = -\sum_{\alpha}R^i_{B^{-1}Z_1\ldots
B^{-1}BS_{B^{-1}X}B^{-1}Z_{\alpha} \ldots B^{-1}Z_{i+4}}\\
& = -\sum_{\alpha}(B\cdot R^i)_{Z_1\ldots
\mathrm{Ad}_B(S_{B^{-1}X})Z_{\alpha} \ldots Z_{i+4}}\\
& =-\sum_{\alpha} R^i_{Z_1\ldots
\mathrm{Ad}_B(S_{B^{-1}X})Z_{\alpha} \ldots Z_{i+4}}\\
& =\left(\mathrm{Ad}_B(S_{B^{-1}X})\cdot R^i\right)_{Z_1\ldots
Z_{i+4}}.
\end{align*}
On the other hand $i_{X}R^{i+1}=S_X\cdot R^i$, so that
$\mathrm{Ad}_B(S_{B^{-1}X})\cdot R^i-S_X$ belongs to $\f{h}$.
Since $S_X$ belongs to $\f{n}$ which is
$\mathrm{Ad}(H(r,s))$-invariant, we also have that
$\mathrm{Ad}_B(S_{B^{-1}X})\cdot R^i-S_X$ belongs to $\f{n}$. This
implies that $\mathrm{Ad}_B(S_{B^{-1}X})\cdot R^i-S_X=0$.
\end{proof}

\begin{corollary}\label{corollary h deriving S}
Let $A\in \f{h}$, then $A\cdot S=0$.
\end{corollary}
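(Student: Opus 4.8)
The plan is to obtain this as the infinitesimal version of Lemma \ref{lemma H deriving S}, simply by differentiating the group-level identity $\mathrm{Ad}_B(S_X)=S_{BX}$ along a one-parameter subgroup. First I would observe that $\f{h}=ker(\mu_{r+1,s+1})$ is exactly the Lie algebra of the stabilizer $H(r,s)$: membership $A\in\f{h}$ means precisely that $A\cdot R^i=0$ for $0\leq i\leq r+1$ and $A\cdot P^j=0$ for $0\leq j\leq s+1$, which is the condition that $A$ be an infinitesimal stabilizer of the tensors $R^0,\ldots,R^{r+1},P^0,\ldots,P^{s+1}$ (this was already noted in the text). Consequently, for $A\in\f{h}$ the one-parameter subgroup $B(t)=\exp(tA)$ lies in $H(r,s)$, so Lemma \ref{lemma H deriving S} applies to each $B(t)$.

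Next I would make explicit the meaning of $A\cdot S$. The map $S$ is a linear map $V\to\f{n}\subset\f{so}(V)\subset\mathrm{End}(V)$, that is, an element of $V^*\otimes\f{so}(V)$, on which $A$ acts as a derivation. Acting on the $\mathrm{End}(V)$ factor gives the commutator $[A,S_X]$, while acting on the $V^*$ factor contributes $-S_{AX}$; thus
$$(A\cdot S)_X=[A,S_X]-S_{AX},\qquad X\in V.$$
So the corollary is equivalent to the identity $[A,S_X]=S_{AX}$ for all $A\in\f{h}$ and all $X\in V$.

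I would then establish this identity by differentiation. By Lemma \ref{lemma H deriving S}, for every $t$ and every $X\in V$ we have
$$B(t)\,S_X\,B(t)^{-1}=S_{B(t)X}.$$
Differentiating the left-hand side at $t=0$ yields $A S_X-S_X A=[A,S_X]$, while differentiating the right-hand side, using that $S$ is linear and $\dt B(t)X=AX$, yields $S_{AX}$. Hence $[A,S_X]=S_{AX}$, and therefore $(A\cdot S)_X=0$ for every $X\in V$, i.e. $A\cdot S=0$.

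The computation is essentially routine, so there is no serious obstacle; the only points requiring care are the correct form of the derivation action on the covector slot (which produces the term $-S_{AX}$, and hence the crucial appearance of $S_{AX}$ rather than a pure commutator) and the verification that $\exp(tA)$ indeed remains inside $H(r,s)$, which is guaranteed once one identifies $\f{h}$ with the Lie algebra of $H(r,s)$.
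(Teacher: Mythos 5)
Your proof is correct and is precisely the argument the paper intends: the corollary is stated as an immediate consequence of Lemma \ref{lemma H deriving S}, obtained by noting that $\f{h}=\ker(\mu_{r+1,s+1})$ is the Lie algebra of the stabilizer $H(r,s)$ and differentiating $\mathrm{Ad}_{\exp(tA)}(S_X)=S_{\exp(tA)X}$ at $t=0$ to get $(A\cdot S)_X=[A,S_X]-S_{AX}=0$. Your care with the derivation action on the $V^*$ slot and with the identification of $\f{h}$ as $\mathrm{Lie}(H(r,s))$ matches exactly what the paper leaves implicit.
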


We take
\begin{align*}
T_XY &=S_YX-S_XY,\\
K_{XY} &= R^0_{XY}+[S_X,S_Y]+S_{T_XY},\\
P &= P^0.
\end{align*}

\begin{proposition}\label{proposition infinitesimal model}
The triple $(T,K,P)$ is an infinitesimal model.
\end{proposition}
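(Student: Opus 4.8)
The plan is to verify the eight defining identities \eqref{infinitesimal model properties 1}--\eqref{infinitesimal model properties 8} one by one, exploiting the fact that $T$ and $K$ are assembled from $R^0$ and $S$ exactly as the torsion and curvature of the candidate connection $\wnabla=\nabla-S$. The skew-symmetries \eqref{infinitesimal model properties 1} and \eqref{infinitesimal model properties 2} are immediate: $T_XY=-T_YX$ from linearity of $S$, while $K_{XY}=-K_{YX}$ follows from \eqref{infinit information properties 1} (skewness of $R^0$ in its first pair), the antisymmetry of $[S_X,S_Y]$, and $S_{T_XY}=-S_{T_YX}$. Since $R^0_{XY}$, $[S_X,S_Y]$ and $S_{T_XY}$ all lie in $\f{so}(V)$ (the last two because $S_X\in\f n\subset\f{so}(V)$), each $K_{XY}$ is skew, a fact used repeatedly below.

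The central step is to prove $K_{XY}\in\f h=ker(\mu_{r+1,s+1})$. First I would show $K_{XY}\cdot R^i=0$ for $0\le i\le r$. Applying \eqref{property SXR} at orders $i$ and $i+1$ gives $i_XR^{i+2}=S_X\cdot R^{i+1}$ and $i_YR^{i+1}=S_Y\cdot R^i$, so using the derivation rule $i_X(S_Y\cdot\omega)=S_Y\cdot i_X\omega-i_{S_YX}\omega$ one computes $i_Xi_YR^{i+2}-i_Yi_XR^{i+2}=-([S_X,S_Y]+S_{T_XY})\cdot R^i$; comparing with the Ricci identity \eqref{infinit information properties 6} yields $K_{XY}\cdot R^i=(R^0_{XY}+[S_X,S_Y]+S_{T_XY})\cdot R^i=0$. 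The identical computation with \eqref{property SXP} and \eqref{infinit information properties 7} gives $K_{XY}\cdot P^j=0$ for $0\le j\le s$. Hence $K_{XY}\in ker(\mu_{r,s})$, which by \eqref{property ker mu} equals $ker(\mu_{r+1,s+1})=\f h$. Property \eqref{infinitesimal model properties P} is then immediate, since $P^0$ is annihilated by every element of $\f h$.

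Properties \eqref{infinitesimal model properties T} and \eqref{infinitesimal model properties R} follow formally once $K_{XY}\in\f h$ is known. By Corollary \ref{corollary h deriving S} every $A\in\f h$ satisfies $A\cdot S=0$, i.e. $[A,S_X]=S_{AX}$; substituting this into $T_XY=S_YX-S_XY$ gives $A\cdot T=0$, whence $K_{XY}\cdot T=0$. For \eqref{infinitesimal model properties R} I would check that the derivation $A=K_{XY}$ annihilates each summand of $K$: $A\cdot R^0=0$ because $A\in\f h$, while $A\cdot[S,S]=0$ and $A\cdot(S\circ T)=0$ follow from $[A,S_X]=S_{AX}$, the Jacobi identity, and $A\cdot T=0$ by a short Leibniz computation. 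Thus $K_{XY}\cdot K=0$.

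It remains to verify \eqref{infinitesimal model properties 3}, \eqref{infinitesimal model properties 7} and \eqref{infinitesimal model properties 8}, which involve only $R^0$, $R^1$, $S$ and $T$, and are therefore precisely the identities met in the structureless Riemannian case of \cite{Nic-Tri}. For \eqref{infinitesimal model properties 7} I would use $T_{T_XY}Z=S_Z(T_XY)-S_{T_XY}Z$ to rewrite the cyclic sum as $\Cyclic_{\SXYZ}R^0_{XY}Z+\Cyclic_{\SXYZ}([S_X,S_Y]Z+S_Z(T_XY))$; the first sum vanishes by the first Bianchi identity \eqref{infinit information properties 2}, and the second cancels termwise upon expanding $T$. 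Identity \eqref{infinitesimal model properties 3} (equivalent, given \eqref{infinitesimal model properties 2} and skewness of $K_{XY}$, to the pair symmetry of $K$) and identity \eqref{infinitesimal model properties 8} are the technical core: \eqref{infinitesimal model properties 3} is obtained from the pair symmetry of $R^0$ together with a direct manipulation of $\langle[S_X,S_Y]Z,W\rangle+\langle S_{T_XY}Z,W\rangle$ using skewness of the $S_X$, while \eqref{infinitesimal model properties 8} is the algebraic second Bianchi identity, where the relation $i_XR^1=S_X\cdot R^0$ must be combined with the symmetry \eqref{infinit information properties 5} of $R^1$. I expect \eqref{infinitesimal model properties 8} (and the bookkeeping behind \eqref{infinitesimal model properties 3}) to be the main obstacle: unlike the torsion-free case, the first Bianchi identity carries the correction term $T_{T_XY}Z$, so pair symmetry and the second Bianchi identity cannot be quoted for free but must be extracted from the explicit symmetries of $R^0$ and $R^1$.
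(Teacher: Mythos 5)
Your proposal is correct and is essentially the paper's own proof: the paper likewise derives $K_{XY}\cdot R^i=0$ and $K_{XY}\cdot P^j=0$ by skew-symmetrizing $R^{i+2}$ and $P^{j+2}$ in $X,Y$ and comparing with the Ricci identities \eqref{infinit information properties 6} and \eqref{infinit information properties 7}, concludes $K_{XY}\in\f{h}$ from \eqref{property ker mu}, obtains \eqref{infinitesimal model properties T} from Corollary \ref{corollary h deriving S} and \eqref{infinitesimal model properties R} from the Leibniz formula \eqref{h deriving K}, and settles \eqref{infinitesimal model properties 1}, \eqref{infinitesimal model properties 2}, \eqref{infinitesimal model properties 3}, \eqref{infinitesimal model properties 7} and \eqref{infinitesimal model properties 8} by invoking the arguments of \cite{Nic-Tri}, exactly as you do. One caution on \eqref{infinitesimal model properties 3}: your intermediate suggestion that it follows from pair symmetry of $R^0$ plus a manipulation using only skewness of the $S_X$ would fail, since for an arbitrary $\f{so}(V)$-valued linear map $S$ the tensor $\langle([S_X,S_Y]+S_{T_XY})Z,W\rangle$ need not be pair symmetric (one can build counterexamples already on $\R^3$), so, as your closing sentence correctly anticipates, the defining relation \eqref{property SXR} and the symmetries of $R^1$ must genuinely enter, which is what the cited arguments of \cite{Nic-Tri} use.
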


\begin{proof}
We have to show that $(T,K,P)$ satisfies \eqref{infinitesimal
model properties 1},...,\eqref{infinitesimal model properties 8}.
For \eqref{infinitesimal model properties 1}, \eqref{infinitesimal
model properties 2}, \eqref{infinitesimal model properties 3},
\eqref{infinitesimal model properties 7} and \eqref{infinitesimal
model properties 8} one uses exactly the same arguments used in
\cite{Nic-Tri}. For the remaining, we observe that
\begin{align*}
R^{i+2}_{XY}-R^{i+2}_{YX}=\left([S_X,S_Y]+S_{T_XY}\right)\cdot
R^i,\qquad 0\leq i\leq r,\\
P^{j+2}_{XY}-P^{j+2}_{YX}=\left([S_X,S_Y]+S_{T_XY}\right)\cdot
P^j,\qquad 0\leq j\leq s.
\end{align*}
In fact, by \eqref{property SXR}
\begin{align*}
R^{i+2}_{XYZ_1\ldots Z_{i+4}} & = (i_XR^{i+2})_{YZ_1\ldots
Z_i}=(S_X\cdot R^{i+1})_{XYZ_1\ldots Z_{i+4}}\\
 & = -R^{i+1}_{S_XYZ_1\ldots Z_{i+4}}-\sum_{\alpha=1}^{i+4}R^{i+1}_{YZ_1\ldots S_XZ_{\alpha}\ldots
 Z_{i+4}}\\
 & = -\left(i_{S_XY}R^{i+1}\right)_{Z_1\ldots Z_{i+4}}-\sum_{\alpha=1}^{i+4}\left(i_YR^{i+1}\right)_{Z_1\ldots S_XZ_{\alpha}\ldots
 Z_{i+4}}\\
 & = -\left(S_{S_XY}\cdot R^i\right)_{Z_1\ldots Z_{i+4}}-\sum_{\alpha=1}^{i+4}\left(S_Y\cdot R^i\right)_{Z_1\ldots S_XZ_{\alpha}\ldots
 Z_{i+4}}\\
 & = \sum_{\alpha=1}^{i+4} R^i_{Z_1\ldots S_{S_XY}Z_{\alpha}\ldots
 Z_{i+4}}+\sum_{\alpha,\beta=1}^{i+4} R^i_{Z_1\ldots S_XZ_{\alpha}\ldots S_YZ_{\beta}\ldots
 Z_{i+4}},
\end{align*}
and by \eqref{property SXP} a similar argument holds for
$P^{j+2}_{XY}$. Skew-symmetrizing in $X,Y$ we obtain the desired
formulae. Therefore, by \eqref{infinit information properties 6}
and \eqref{infinit information properties 7} and the definition of
$K$ we obtain that $K\cdot R^i=0$ and $K_{XY}\cdot P^j=0$, for
$0\leq i\leq r$ and $0\leq j\leq s$, so in particular $K_{XY}\cdot
P^0=0$ and $K_{XY}\cdot R^0$. Making use of \eqref{property ker
mu} this implies that $K_{XY}\in \f{h}$, whence $K_{XY}\cdot S=0$
by Corollary \ref{corollary h deriving S}, giving that
$K_{XY}\cdot T=0$. Finally, as a straightforward computation
shows, for $A\in\f{h}$
\begin{equation}\label{h deriving K}
(A\cdot K)_{XY} = (A\cdot R^0)_{XY}+[(A\cdot S)_X,S_Y]-[(A\cdot
S)_Y,S_X]+S_{(A\cdot T)_XY},
\end{equation}
so that $K_{XY}\cdot K=0$.
\end{proof}

\begin{proposition}\label{proposition h=k}
$$\f{h}=\f{h}_0=\{A\in\f{so}(V)/\, A\cdot K=0,\,A\cdot T=0,\, A\cdot P=0\}.$$
\end{proposition}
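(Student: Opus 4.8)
The plan is to establish the two inclusions $\f{h}\subseteq\f{h}_0$ and $\f{h}_0\subseteq\f{h}$ separately, where $\f{h}=\ker(\mu_{r+1,s+1})$. The two computational tools I would set up first are the Leibniz expansions of the derivation action of an arbitrary $A\in\f{so}(V)$ on $T_{XY}=S_YX-S_XY$ and on $K_{XY}=R^0_{XY}+[S_X,S_Y]+S_{T_{XY}}$, rewriting $[A,S_X]=(A\cdot S)_X+S_{AX}$ throughout. These give
$$(A\cdot T)_{XY}=(A\cdot S)_YX-(A\cdot S)_XY,$$
together with the identity \eqref{h deriving K} (which for a general $A$ carries an extra summand $(A\cdot S)_{T_{XY}}$ that disappears exactly when $A\cdot S=0$). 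I would also record the contraction identity $i_X(A\cdot\omega)=A\cdot(i_X\omega)-i_{AX}\omega$, valid for any tensor $\omega$ with a covariant first slot.

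For $\f{h}\subseteq\f{h}_0$, take $A\in\f{h}$. Then $A\cdot R^0=0$ and $A\cdot P=A\cdot P^0=0$ since $\f{h}\subseteq\ker(\mu_{0,0})$, and Corollary \ref{corollary h deriving S} gives $A\cdot S=0$. The displayed formula for $A\cdot T$ then yields $A\cdot T=0$, and \eqref{h deriving K} (all terms vanishing) yields $A\cdot K=0$. Hence $A\in\f{h}_0$.

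The reverse inclusion $\f{h}_0\subseteq\f{h}$ is the substantive one, and I expect the main obstacle to be recovering $A\cdot S=0$, since Corollary \ref{corollary h deriving S} is only available for elements already known to lie in $\f{h}$. The key is to extract it from $A\cdot T=0$ alone. Writing $\sigma_X:=(A\cdot S)_X\in\f{so}(V)$, the vanishing of $A\cdot T$ reads $\sigma_XY=\sigma_YX$, so the trilinear form $C(X,Y,Z):=\langle\sigma_XY,Z\rangle$ is symmetric in $X,Y$ while being skew in $Y,Z$ because $\sigma_X\in\f{so}(V)$. The usual alternation argument $C(X,Y,Z)=C(Y,X,Z)=-C(Y,Z,X)=\dots=-C(X,Y,Z)$ forces $C=0$, and nondegeneracy of $\langle\,,\,\rangle$ gives $A\cdot S=0$; note this step is insensitive to the signature. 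With $A\cdot S=0$ available, \eqref{h deriving K} together with $A\cdot K=0$ and $A\cdot T=0$ collapses to $A\cdot R^0=0$, and $A\cdot P^0=A\cdot P=0$.

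Finally I would propagate these vanishings to all orders. Applying $A$ as a derivation to \eqref{property SXR} and \eqref{property SXP} and using the contraction identity gives
$$i_X(A\cdot R^{i+1})=(A\cdot S)_X\cdot R^i+S_X\cdot(A\cdot R^i),\qquad i_X(A\cdot P^{j+1})=(A\cdot S)_X\cdot P^j+S_X\cdot(A\cdot P^j),$$
for $0\le i\le r+1$ and $0\le j\le s+1$. Since $A\cdot S=0$, the first terms drop, and starting from $A\cdot R^0=0$, $A\cdot P^0=0$ an induction yields $A\cdot R^i=0$ and $A\cdot P^j=0$ in the stated ranges, i.e. $A\in\ker(\mu_{r+1,s+1})=\f{h}$. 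Combining the two inclusions gives $\f{h}=\f{h}_0$, as claimed.
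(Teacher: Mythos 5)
Your proof is correct and takes essentially the same route as the paper: both inclusions are handled with identical key steps (Corollary \ref{corollary h deriving S} plus \eqref{h deriving K} for $\f{h}\subseteq\f{h}_0$; extracting $A\cdot S=0$ from $A\cdot T=0$, then collapsing \eqref{h deriving K} and running the same induction on the formulas $(A\cdot R^{i+1})_X=(A\cdot S)_X\cdot R^i+S_X\cdot(A\cdot R^i)$ and $(A\cdot P^{j+1})_X=(A\cdot S)_X\cdot P^j+S_X\cdot(A\cdot P^j)$ for $\f{h}_0\subseteq\f{h}$). The only cosmetic difference is in the step $A\cdot T=0\Rightarrow A\cdot S=0$: the paper invokes the explicit recovery formula $2\langle S_XY,Z\rangle=-\langle T_XY,Z\rangle+\langle T_YZ,X\rangle-\langle T_ZX,Y\rangle$, while you reprove that determination of $S$ by $T$ directly via the trilinear alternation argument (symmetric in $X,Y$, skew in $Y,Z$), which is the same fact in different clothing — and your observation that \eqref{h deriving K} carries an extra summand $(A\cdot S)_{T_XY}$ for general $A$, vanishing once $A\cdot S=0$, is a careful and accurate reading of how that identity is legitimately applied to elements of $\f{h}_0$.
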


\begin{proof}
Let $A\in \f{h}$, by Corollary \ref{corollary h deriving S} we
have $A\cdot S=0$, which implies $A\cdot T=0$. In addition, by
\eqref{h deriving K} we have $A\cdot K=0$. Since $P=P^0$, by
definition we deduce that $A\in \f{h}_0$, hence
$\f{h}\subset\f{h}_0$. Conversely, let $A\in \f{h}_0$. We have
that $A\cdot S=0$ since $S$ is recovered from $T$ making use of
$$2\langle S_XY,Z\rangle=-\langle T_XY,Z\rangle+\langle T_YZ,X\rangle-\langle T_ZX,Y\rangle.$$
On the other hand, by \eqref{h deriving K} we obtain $A\cdot
R^0=0$, and since $P=P^0$ we also have $A\cdot P^0=0$. Now, a
simple computation (see Lemma \ref{lemma H deriving S}) shows that
\begin{align*}(A\cdot R^{i+1})_X &=[A,S_X]\cdot R^i-S_{AX}\cdot
R^i+S_X\cdot(A\cdot R^i)\\
&= (A\cdot S)_X\cdot R^i+S_X\cdot(A\cdot R^i),\qquad 0\leq i\leq r+1,\\
(A\cdot P^{j+1})_X &=[A,S_X]\cdot P^j-S_{AX}\cdot
P^j+S_X\cdot(A\cdot P^j)\\
& = (A\cdot S)_X\cdot P^j+S_X\cdot(A\cdot P^j),\qquad 0\leq j\leq
s+1.
\end{align*}
Using these formulae, by an inductive argument on the indices $i$
and $j$ we obtain that $A\cdot R^i=0$ and $A\cdot P^j=0$ for
$0\leq i\leq r+1$ and $0\leq j\leq s+1$. Hence $A\in\f{h}$,
proving that $\f{h}_0\subset \f{h}$.
\end{proof}

\begin{definition}\label{definition regular 2}
The infinitesimal data $R^0,\ldots,R^{r+2}$, $P^0,\ldots,P^{s+2}$
is said regular if the associated infinitesimal model $(T,K,P)$ is
regular.
\end{definition}

\begin{remark}\label{remark recovery of infinitesimal information}
$R^0,\ldots,R^{r+2}$, $P^0,\ldots,P^{s+2}$ is recovered from the
infinitesimal model $(T,K,P)$ in the following way. As we have
seen $S$ is obtained from $T$ by
$$2\langle S_XY,Z\rangle=-\langle T_XY,Z\rangle+\langle T_YZ,X\rangle-\langle T_ZX,Y\rangle.$$
With $T$ and $S$ one recovers $R^0$ using the definition of $K$.
Finally, knowing $R^0$ and $P^0=P$, and using \eqref{property SXR}
and \eqref{property SXP}, one can subsequently obtain $R^i$ and
$P^j$.
\end{remark}

We are now in position to prove Theorem \ref{reconstruction
theorem pseudo-Riemannian}.

\bigskip

\begin{proof}[Proof of Theorem \ref{reconstruction theorem pseudo-Riemannian}]
Suppose that the infinitesimal model $(T,K,P)$ associated with the
infinitesimal data $R^0,\ldots,R^{r+2}$, $P^0,\ldots,P^{s+2}$ is
regular. We consider the Nomizu construction
$\f{g}_0=\f{h}_0\oplus V$, and the Lie groups $G_0$ and $H_0$,
where $G_0$ is the simply-connected Lie group with Lie algebra
$\f{g}_0$ and $H_0$ is its connected Lie subgroup with Lie algebra
$\f{h}_0$. Since $H_0$ is closed in $G_0$ we consider the
homogeneous space $G_0/H_0$, which is a reductive homogeneous
space with reductive decomposition $\f{g}_0=\f{h}_0\oplus V$.
Identifying $V=T_oG_0/H_0$ we extend $\langle\,,\,\rangle$ and $P$
to a $G_0$-invariant Riemannian metric $g$ and a $G_0$-invariant
tensor field $\bar{P}$ on $G_0/H_0$ respectively. We consider the
canonical connection associated with that reductive decomposition
(see \cite[Ch. X]{KN}), which is an ASK-connection whose curvature
and torsion coincide with $K$ and $T$. As a straightforward
computation using the properties of the canonical connection
shows, $R^0,\ldots,R^{r+2}$, $P^0,\ldots,P^{s+2}$ coincide with
the covariant derivatives of the curvature of $g$ and $\bar{P}$ at
the origin $o\in G_0/H_0$. By the identification of $T_oG_0/H_0$
with $V$, we have that $G_0/H_0$ is $(r,s)$-strongly reductive.
This proves the second part of the theorem.

Concerning the first part, we adapt the arguments used in
\cite{Tri-Wat}. Let $(T,K,P)$ be the infinitesimal model
associated with the infinitesimal data $R^0,\ldots,R^{r+2}$,
$P^0,\ldots,P^{s+2}$, which now need not be regular. We consider
the corresponding Nomizu construction $\f{g}_0=\f{h}_0\oplus V$.
Let $G_0$ be the simply-connected Lie group with Lie algebra
$\f{g}_0$, we choose an orthonormal basis $\{e_1,\ldots,e_n\}$ of
$V$, and denote by $\{e^1,\ldots,e^n\}$ its dual basis. Let
$\{A_1,\ldots,A_d\}$ be a basis of $\f{h}_0$, and
$\{A^1,\ldots,A^d\}$ its dual basis. We write
\begin{align*}
T &= T_{\alpha\beta}^{\gamma}e^{\alpha}\otimes e^{\beta}\otimes
e_{\gamma},\\
K & = K_{\alpha\beta\gamma}^{\delta}e^{\alpha}\otimes
e^{\beta}\otimes e^{\gamma}\otimes e_{\delta},\\
P
&=P_{\alpha_1\ldots\alpha_u}^{\beta_1\ldots\beta_v}e^{\alpha_1}\otimes\ldots\otimes
e^{\alpha_u}\otimes e_{\beta_1}\otimes\ldots\otimes e_{\beta_v},\\
\end{align*}
and define
$$\omega^{\alpha}_{\beta}=e^{\alpha}(A_{\gamma}(e_{\beta}))\otimes A^{\gamma},$$
where Einstein's summation convention is used. Note that
$\omega^{\alpha}_{\beta}\in \f{g}^*$, so that
$$\omega=\omega^{\alpha}_{\beta}A_{\alpha}\otimes A^{\beta}$$
defines a left invariant $2$-form on $G_0$ with values in
$\f{h}_0\subset\f{so}(V)$. Making use of the brackets defined in
$\f{g}_0$ we easily obtain
\begin{align}
de^{\alpha}
&=\frac{1}{2}T_{\beta\gamma}^{\alpha}-\omega^{\alpha}_{\beta}\wedge
e^{\beta},\label{structure equation 1}\\
d\omega^{\alpha}_{\beta}
&=-\frac{1}{2}K_{\gamma\delta\beta}^{\alpha}e^{\gamma}e^{\delta}-\omega^{\alpha}_{\gamma}\wedge\omega^{\gamma}_{\beta}.\label{structure
equation 2}
\end{align}
We now consider a coordinate system
$\phi=(x^1,\ldots,x^n,y^1,\ldots,y^d)$ around the identity element
$e\in G_0$ such that $dx^{\alpha}_{|e}=e^{\alpha}_{|e}$, and take
$$\begin{array}{rrcl}
f: & \widetilde{\mathcal{U}} & \to & \mathcal{U}\\
   & (a_1,\ldots,a_n) & \mapsto &
   \phi^{-1}(a_1,\ldots,a_n,0,\ldots,0),
\end{array}$$
where $\mathcal{U}$ is the coordinate neighborhood and
$\widetilde{\mathcal{U}}$ is an open subset of $\R^n$ where $f$
can be defined. It is evident that the map $f$ defines an
immersion from an open set $W\subset\R^n$ containing the origin of
$\R^n$ into $G_0$. Let $\widetilde{E}^{\alpha}=f^*(e^{\alpha})$,
since these $1$-forms are linearly independent at the origin of
$\R^n$, there is an open set $M\subset W$ around the origin where
they are linearly independent. Let
$\{\widetilde{E}_1,\ldots,\widetilde{E}_{n}\}$ be the dual frame
field,
 we define on $M$ the pseudo-Riemannian metric
$$g = \sum_{\alpha=1}^n\widetilde{E}^{\alpha}\otimes\widetilde{E}^{\alpha},$$
and the tensor fields
\begin{align*}
\widetilde{T} &=
T_{\alpha\beta}^{\gamma}\widetilde{E}^{\alpha}\otimes
\widetilde{E}^{\beta}\otimes
\widetilde{E}_{\gamma},\\
\widetilde{K} & =
K_{\alpha\beta\gamma}^{\delta}\widetilde{E}^{\alpha}\otimes
\widetilde{E}^{\beta}\otimes \widetilde{E}^{\gamma}\otimes \widetilde{E}_{\delta},\\
\widetilde{P}
&=P_{\alpha_1\ldots\alpha_u}^{\beta_1\ldots\beta_v}\widetilde{E}^{\alpha_1}\otimes\ldots\otimes
\widetilde{E}^{\alpha_u}\otimes
\widetilde{E}_{\beta_1}\otimes\ldots\otimes
\widetilde{E}_{\beta_v}.
\end{align*}
In addition we consider $\widetilde{\omega}=f^*\omega$ which is a
$1$-form on $M$ with values in $\f{h}_0$. Note that
$\{\widetilde{E}_1,\ldots,\widetilde{E}_{n}\}$ is an orthonormal
frame field defined on the whole $M$, so that it is a section of
the bundle of orthonormal frames $\mathcal{O}(M)$ which
trivializes it. Hence, making use of that section,
$\widetilde{\omega}$ is the $1$-form of a metric connection
$\wnabla$ on $\mathcal{O}(M)$. By \eqref{structure equation 1} and
\eqref{structure equation 2}, which are nothing but the structure
equations for the torsion and curvature of $\omega$, we have that
$\widetilde{T}$ and $\widetilde{K}$ are the torsion and curvature
of the connection $\wnabla$ respectively. Since
$\widetilde{\omega}$ takes values in $\f{h}_0$, we have that
$\widetilde{T}$, $\widetilde{K}$ and $\widetilde{P}$ are parallel
with respect to $\wnabla$, that is, $\wnabla$ is an
ASK-connection. Therefore, $(M,g,\widetilde{P})$ is locally
homogeneous with $\widetilde{P}$ invariant. Finally, making use of
Remark \ref{remark recovery of infinitesimal information}, it is
easy to see that the covariant derivatives of $\widetilde{P}$ and
the curvature of $g$ at the origin coincide with
$R^0,\ldots,R^{r+2}$, $P^0,\ldots,P^{s+2}$ under the
identification $T_oM\simeq V$. In addition, by this identification
$M$ is $(r,s)$-strongly reductive.

In both, the first and the second part of the theorem, uniqueness
(up to local isometry) follows from Proposition \ref{proposition
uniqueness pseudo}.
\end{proof}

\bigskip

Note that the strong reductivity condition \eqref{condition
strongly-reductivity} is essential in the proof of Theorem
\ref{reconstruction theorem pseudo-Riemannian}, since otherwise we
are not able to construct the infinitesimal model $(T,K,P)$ from
the infinitesimal data $R^0,\ldots,R^{r+2}$, $P^0,\ldots,P^{s+2}$.
This means that in general a locally homogeneous pseudo-Riemannian
manifold whose metric is not definite might not be recovered from
infinitesimal data. If the manifold admits an ASK-connection
$\wnabla$, this problem can be solved if we add to
$R^0,\ldots,R^{r+2}$, $P^0,\ldots,P^{s+2}$ the knowledge of either
$S_p$, where $S=\wnabla-\nabla$, the torsion of $\wnabla$ at $p$,
or the curvature of $\wnabla$ at $p$ (these three last items
provide equivalent information in view of Remark \ref{remark
recovery of infinitesimal information}). In that case, an
analogous result to Proposition \ref{proposition uniqueness
pseudo} can be proved by a straightforward adaptation.

\section{Examples and the reductivity condition}\label{section reductivity condition}

We begin this section showing a necessary condition for a
reductive locally homogeneous pseudo-Riemannian manifold to be
locally isometric to a globally homogeneous pseudo-Riemannian
manifold. This question has already been solved in the Riemannian
case (see for instance \cite{Ko3} and \cite{Spiro}).

\begin{proposition}\label{proposition regular implica locali isometric to global}
Let $(M,g,\mathcal{G})$ be a reductive locally homogeneous
pseudo-Rie\-mannian manifold endowed with an associated
AS-connection $\wnabla$. If the infinitesimal model associated
with $\wnabla$ is regular, then $(M,g)$ is locally isometric to a
reductive globally homogeneous pseudo-Riemannian manifold. The
same holds if the transvection algebra is regular.
\end{proposition}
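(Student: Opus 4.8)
The plan is to extract from $\wnabla$ an infinitesimal model, build from it a genuine reductive homogeneous space via the Nomizu construction (made possible precisely by the regularity hypothesis), and then identify $(M,g)$ with that space locally by comparing the two manifolds through their respective AS-connections.

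First I would fix $p\in M$, put $V=T_pM$, $\langle\,,\,\rangle=g_p$, and let $T$ and $K$ be the torsion and curvature of $\wnabla$ at $p$. As observed after Definition~\ref{definition infinitesimal model}, the pair $(T,K)$ satisfies \eqref{infinitesimal model properties 1}--\eqref{infinitesimal model properties 8} (with \eqref{infinitesimal model properties P} absent, since no geometric structure is present) and hence is an infinitesimal model. Its Nomizu construction is $\f{g}_0=\f{h}_0\oplus V$ with $\f{h}_0=\{A\in\f{so}(V):A\cdot K=0,\ A\cdot T=0\}$ and the usual brackets. Let $G_0$ be the simply-connected Lie group with Lie algebra $\f{g}_0$ and $H_0$ its connected subgroup with Lie algebra $\f{h}_0$. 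Since by hypothesis the model is regular, $H_0$ is closed in $G_0$, and as recalled after Definition~\ref{definition regular 1} the quotient $G_0/H_0$ is a pseudo-Riemannian homogeneous space extending $\langle\,,\,\rangle$ to a $G_0$-invariant metric. Because $[A,X]=A\cdot X\in V$ for $A\in\f{h}_0\subseteq\f{so}(V)$ and $X\in V$, the subspace $V$ is $\mathrm{ad}(\f{h}_0)$-stable, hence $\mathrm{Ad}(H_0)$-invariant as $H_0$ is connected; thus $\f{g}_0=\f{h}_0\oplus V$ is a reductive decomposition and $G_0/H_0$ is reductive globally homogeneous.

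Next I would compare the two manifolds via their canonical connections. Equip $G_0/H_0$ with the canonical connection $\wnabla^0$ of the reductive decomposition (see \cite[Ch.~X]{KN}); exactly as in the proof of Theorem~\ref{reconstruction theorem pseudo-Riemannian}, $\wnabla^0$ is a metric AS-connection whose torsion and curvature at the origin $o$ coincide with $T$ and $K$ under the identification $T_o(G_0/H_0)\simeq V$. The identity $F:T_pM=V\to V=T_o(G_0/H_0)$ is then a linear isometry carrying the torsion and curvature of $\wnabla$ to those of $\wnabla^0$; since both are AS-connections, these tensors are parallel, so by the theorem on affine manifolds with parallel torsion and curvature invoked in the proofs of Theorem~\ref{theorem 2} and Proposition~\ref{proposition uniqueness pseudo} there exist neighborhoods of $p$ and $o$ and an affine map $f$ with $f_{*|p}=F$. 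As both connections are metric, $f$ is an isometry, and $(M,g)$ is locally isometric to the reductive globally homogeneous manifold $G_0/H_0$.

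Finally, the statement for the transvection algebra follows along identical lines, replacing $\f{h}_0$ by the subalgebra $\f{h}_0'$ spanned by the operators $K_{XY}$ and $\f{g}_0$ by $\f{g}_0'=\f{h}_0'\oplus V$: regularity of the transvection algebra means $H_0'$ is closed in $G_0'$, so $G_0'/H_0'$ is again reductive globally homogeneous, its canonical connection still realizes $(T,K)$ at the origin (the bracket $[X,Y]=-T_XY+K_{XY}$ contributes only $K_{XY}\in\f{h}_0'$), and the same local-affine-isometry argument closes the proof. The only delicate point I expect is verifying that the canonical connection of the Nomizu construction is metric, AS, and realizes $(T,K)$ at the origin; this is the hinge of the argument, but it is the standard behaviour of the canonical connection of a reductive homogeneous space, so the real content of the proof is simply that regularity turns the infinitesimal model into an honest homogeneous space.
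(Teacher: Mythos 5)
Your proposal is correct and follows essentially the same route as the paper's own proof: the Nomizu construction on the infinitesimal model $(T,K)$ of $\wnabla$ at $p$, regularity giving the closedness of $H_0$ and hence the homogeneous space $G_0/H_0$ with its canonical connection realizing $(T,K)$ at the origin, and the Kobayashi--Nomizu affine-equivalence theorem producing the local isometry, with the transvection algebra case handled by the same substitution. Your added verification that $V$ is $\mathrm{Ad}(H_0)$-invariant is a detail the paper leaves implicit, but it does not change the argument.
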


\begin{proof}
Let $p\in M$, consider the Nomizu construction $\f{g}_0=T_pM\oplus
\f{h}_0$ associated with the infinitesimal model $(T,K)$. Let
$G_0$ be the simply-connected Lie group with Lie algebra
$\f{g}_0$, and $H_0$ its connected subgroup with Lie algebra
$\f{h}_0$. If $(T,K)$ is regular then $H_0$ is closed in $G_0$, so
that we can consider the homogeneous space $G_0/H_0$. Moreover,
$G_0/H_0$ is reductive as $\f{g}_0=T_pM\oplus \f{h}_0$ is a
reductive decomposition, and the tangent space of $G_0/H_0$ at the
origin $o$ is identified with $T_pM$ through a linear isomorphism
$F:T_pM\to T_0(G_0/H_0)$. This homogeneous space is thus endowed
with a $G_0$-invariant pseudo-Riemannian matric inherited from $g$
at $p$. We consider the canonical connection $\wnabla^{can}$
associated with this reductive decomposition (see \cite[Ch.
X]{KN}). Under the identification $F$, the curvature and torsion
of $\wnabla$ coincides with $K$ and $T$ respectively. This means
that there is a linear isometry $F:T_pM\to T_0(G_0/H_0)$
preserving the curvature and torsion of $\wnabla$ and
$\wnabla^{can}$. Therefore, there are open neighborhoods
$\mathcal{U}$ and $\mathcal{V}$ of $p$ and $o$ respectively, and
an affine map $f:\mathcal{U}\to\mathcal{V}$ with respect to
$\wnabla$ and $\wnabla^{can}$ taking $p$ to $o$ (see \cite[Vol. I,
Ch. VI]{KN}). Since both connections are metric we have that $f$
is an isometry. The same arguments can be applied substituting the
Nomizu construction by the transvection algebra.
\end{proof}

\bigskip

As we know, a globally homogeneous space can be represented as
different coset spaces $G/H$. In the same way, we can consider the
action of different Lie pseudo-groups of isometries on the same
locally homogeneous pseudo-Riemannian manifold $(M,g)$. Since the
notion of reductivity is tied to the action of a Lie pseudo-group
in particular, the following question naturally arises: let
$\mathcal{G}$ and $\mathcal{G}'$ be Lie pseudo-groups of
isometries acting transitively on $(M,g)$, is it possible that
$(M,g,\mathcal{G})$ is reductive but $(M,g,\mathcal{G}')$ is
non-reductive? We now present some examples which give an
affirmative answer to this question, and explores the possible
scenerarios when $\mathcal{G}$ is a subgroup of $\mathcal{G}'$ and
viceversa. We will also show that the reductivity condition does
not imply the strong reductivity condition. It is worth pointing
out that this situation is not a consequence of the freedom
obtained by enlarging the (rather rigid) family of globally
homogeneous spaces to the family of locally homogeneous spaces,
and we can find illustrative examples restricting ourselves to
globally homogeneous pseudo-Riemannian manifolds. We will finally
give an example of an stabilizing pair distinct of $(k(p),l(p))$
(see Remark \ref{remark stabilizing pair}).


\begin{example}
Consider $\R^5$ endowed with the standard metric $\eta$ of
signature $(2,3)$. We take the $4$-dimensional submanifold
$$\mathbb{H}^4_1=\{x\in\R^5/\, \eta(x,x)=-1\},$$
endowed with the pseudo-Riemannian metric $g$ inherited from
$\eta$. $(\mathbb{H}^4_1,g)$ is a Lorentz space of constant
sectional curvature, and it is well known that it is the
(globally) symmetric space
$$\mathbb{H}^4_1\simeq \frac{\Lie{SO}_0(2,3)}{\Lie{SO}_0(1,3)}.$$
Let $\{e_1,\ldots,e_5\}$ be the standard basis of $\R^5$, and let
$e^j_i$ denote the endomorphism $e^j\otimes e_i$ of $\R^5$. The
isotropy algebra at the point $p=(0,1,0,0,0)\in\mathbb{H}^4_1$ is
$$\f{so}(1,3)=\mathrm{Span}\{e_1^3+e_3^1,e_1^4+e_4^1,e_1^5+e_5^1,e_3^4-e_4^3,e_3^5-e_5^3,e_4^5-e_5^4\}.$$
An $\Lie{SO}_0(1,3)$-invariant complement is
$$\f{m}=\mathrm{Span}\{e_1^2-e_2^1,e_2^3+e_3^2,e_2^4+e_4^2,e_2^5+e_5^2\},$$
hence $(\mathbb{H}^4_1,g,\Lie{SO}_0(2,3))$ is reductive. Consider
now the Lie subalgebra $\f{g}$ spanned by the elements
$$
e_1^4+e_4^1-e_2^3-e_3^2,\,\dfrac{1}{2}(e_1^2-e_2^1+e_1^3+e_3^1+e_2^4+e_4^2+e_4^3-e_3^4),$$
$$
\dfrac{1}{2}(e_1^3+e_3^1+e_2^1-e_1^2+e_2^4+e_4^2+e_3^4-e_4^3),\,\dfrac{1}{2}(e_2^1-e_1^2+e_2^4+e_4^2+e_4^3-e_3^4-e_1^3-e_3^1),$$
$$
\dfrac{1}{\sqrt{2}}(e_1^5+e_5^1+e_5^4-e_4^5),\,\dfrac{1}{\sqrt{2}}(e_3^5-e_5^3-e_2^5-e_5^2),\,e_1^4+e_4^1+e_2^3+e_3^2.$$
The isotropy algebra $\f{k}$ at $p$ is spanned by the elements
$$2(e_1^4+e_4^1+e_1^4),\,e_1^3+e_3^1+e_3^4-e_4^3,\,\dfrac{1}{\sqrt{2}}(e_1^5+e_5^1+e_5^4-e_4^5).$$
Let $G$ be the connected Lie subgroup of $\Lie{SO}_0(2,3)$ with
Lie algebra $\f{g}$, then $G$ acts transitively on
$\mathbb{H}^4_1$, but there is no $\mathrm{ad}(\f{k})$-invariant
complement of $\f{k}$, so that $(\mathbb{H}^4_1,g,G)$ is
non-reductive (see Lie algebra $A5^*$ in \cite{Fels-Renner}).
\end{example}

\begin{example}\label{example B3}
We consider $\R^4$ endowed with the pseudo-Riemannian metric

\bigskip

\noindent
\resizebox{\linewidth}{!}{$g=2e^{y_1}\cos{y_2}(dy_1dy_4-dy_2dy_3)-2e^{y_1}\sin{y_2}(dy^1dy^3+dy^2dy^4)+Le^{4y_1}dy_2dy_2,$}

\bigskip

\noindent with $L\in\R-\{0\}$. Let $\widetilde{\Lie{SL}(2,\R)}$ be
the universal cover of $\Lie{SL}(2,\R)$, the group
$G'=\widetilde{\Lie{SL}(2,\R)}\ltimes\R^2\times\R$ acts
transitively by isometries on $(\R^4,g)$ (see \S 5 of
\cite{Fels-Renner}). The Lie algebra of $G'$ can be written as
$$[e_1,e_2]=2e_2,\qquad [e_1,e_3]=-2e_3,\qquad [e_2,e_3]=e_1,\qquad [e_1,e_4]=e_4,$$
$$[e_1,e_5]=-e_5,\qquad [e_2,e_5]=e_4, \qquad [e_3,e_4]=e_5,$$
with respect to some basis $\{e_1,\ldots,e_6\}$. It can be found
as the Lie algebra $B3$ in \cite{Fels-Renner}, and moreover it is
the full isometry algebra of $(\R^4,g)$ and can be realized by the
complete Killing vector fields
\begin{align*}
Y_1 & =
\cos(2y_2)\partial_{y_1}-\sin(2y_2)\partial_{y_2}+y_3\partial_{y_3}-y_4\partial_{y_4},\\
Y_2 &= \frac{1}{2}\sin(2y_2)\partial_{y_1}+\cos^2(y_2)\partial_{y_2}+y_3\partial_{y_4},\\
Y_3 &=\frac{1}{2}\sin(2y_2)\partial_{y_1}-\sin^2(y_2)\partial_{y_2}+y_4\partial_{y_3},\\
Y_4 &=\partial_{y_4},\\
Y_5 &=-\partial_{y_3},\\
Y_6
&=e^{y_1}\cos(y_2)\partial_{y_3}+e^{y_1}\sin(y_2)\partial_{y_4}.
\end{align*}
The isotropy algebra at $(0,0,0,0)\in\R^4$ is
$\mathrm{Span}\{e_3,e_5+e_6\}$. As stated in \cite{Fels-Renner},
$(\R^4,g,G')$ is non-reductive. Let
$\f{g}=\mathrm{Span}\{e_1,e_2,e_4,e_5,e_6\}$. Making use of the
distribution generated by the corresponding Killing vector fields
we see that the action of the connected Lie subgroup $G\subset G'$
with Lie algebra $\f{g}$ is still transitive. The isotropy algebra
at $(0,0,0,0)$ is $\f{k}=\mathrm{Span}\{e_5+e_6\}$, and
$\f{m}=\mathrm{Span}\{e_1,e_2,e_4,e_5\}$ is an
$\mathrm{Ad}(K)$-invariant complement, where $K\subset G$ is the
isotropy group with respect to the action of $G$ at $(0,0,0,0)$.
Therefore $(\R^4,g,G)$ is reductive. On the other hand we can
check that $(\R^4,g)$ is not strongly reductive. In this case,
since there is no extra geometric structure, the complex of
filtrations reduces to
$$\f{so}(T_pM)\supset \f{g}(p,0)\supset \f{g}(p,1)\supset\ldots$$
A simple computation shows that the only non-zero component of the
curvature is
$R_{\partial_{y_1}\partial_{y_2}\partial_{y_1}\partial_{y_2}}=-3Le^{4y_1}$,
and $\nabla R=0$. We take $p=(0,0,0,0)$ and $L=1$ for the sake of
simplicity, so that the filtration actually is
$$\f{so}(T_pM)\supset \f{g}(p,0)=\f{g}(p,1),$$
where \begin{align*}\f{so}(T_pM)&=\left\{\begin{pmatrix}-e & 2(b-c) & b & 0\\
f & 2a & a & c \\ 2(d-f) & 0 & -2a & 2(b-c)\\ 0 & 2(d-f) & d & e
\end{pmatrix}/\,a,b,c,d,e,f\in\R \right\},\\
\f{g}(p,0) &=\left\{A\in\f{so}(T_pM)/\, e=2a,\,
f=d\right\}.\end{align*} It is easy to check that $\f{g}(p,0)$
does not admit any complement $\f{n}$ invariant by the adjoint
action of $\f{g}(p,0)$, hence $(\R^4,g)$ cannot be strongly
reductive.
\end{example}

We now exhibit an example of a locally homogeneous pseudo-K\"ahler
manifold with an stabilizing pair distinct form $(k,l)$, where as
usual $(k,l)$ are the first integers such that
$\f{g}(p,k)=\f{g}(p,k+1)$ and $\f{p}(p,l)=\f{p}(p,l+1)$.

\begin{example}
Consider the space $\C^{2}$ with complex coordinates $(w,z)$. We
take $M=\C^{2}-\{||w||=0\}$ with the standard complex structure
$J$ and the pseudo-Riemannian metric
$$g=dw^1dz^1+dw^2dz^2+b(dw^1dw^1+dw^2dw^2),$$
where $w=w^1+iw^2$, $z=z^1+iz^2$, and $b$ is a function depending
on $w^1$ and $w^2$ and satisfying $\Delta b=\frac{b_0}{||w||^4}$.
This manifold is locally homogeneous since it admits an
ASK-connection \cite{Cas-Lu}. Let
$\theta=-\frac{1}{||w||^2}(w^1dw^1+w^2dw^2)$, the curvature tensor
and its first covariant derivative are
$$R=\frac{1}{2}\frac{b_0}{||w||^4}(dw^1\wedge dw^2\otimes dw^1\wedge dw^2), \qquad \nabla R=4\theta\otimes R.$$
We set $b_0=2$ and take the point $p=(-1,0,0,0)$, so that
$$R_p=dw^1\wedge dw^2\otimes dw^1\wedge dw^2,\qquad \nabla R_p=4dw^1\otimes R_p,$$
$$\nabla^2 R_p=(20dw^1\otimes dw^1-4dw^2\otimes dw^2)\otimes R_p.$$
On the other hand, $J_p$ is the standard complex structure of
$\C^{2}$ and $\nabla J_p=0$ since the manifold is pseudo-K\"ahler.
A straightforward computations thus shows that the complex of
filtrations is
$$\begin{matrix}
\f{so}(\R^4)^6 & \supset & \f{g}(p,0)^2 & \supset & \f{g}(p,1)^1 &
= & \f{g}(p,2)^1\\
\cup & & || & & || & & || \\
\f{p}(p,0)^4 & \supset & \f{h}(p,0,0)^2 & \supset & \f{h}(p,1,0)^1
&
= & \f{h}(p,2,0)^1\\
|| & & || & & || & & || \\
\f{p}(p,1)^4 & \supset & \f{h}(p,0,1)^2 & \supset & \f{h}(p,1,1)^1
& = & \f{h}(p,2,1)^1,
\end{matrix}$$
where superindexes indicate dimension. We have that $(k,l)=(1,0)$,
but $(r,s)=(1,-1)$ is a stabilizing pair.
\end{example}

\section*{Acknowledgements}

The author is deeply indebted to Prof. M. Castrill\'on L\'opez and
Prof. P.M. Gadea for useful conversations about the topics of this
paper. This work has been partially funded by MINECO (Spain) under
project MTM2011-22528.

\end{document}